\title{\textbf{\Large Stefan Problems for Reflected SPDEs Driven by Space-Time White Noise}}
\author{Ben Hambly\footnote{ \href{mailto:ben.hambly@maths.ox.ac.uk}{ben.hambly@maths.ox.ac.uk}}\hspace{2mm} and Jasdeep Kalsi \footnote{\href{mailto:jasdeep.kalsi@maths.ox.ac.uk}{jasdeep.kalsi@maths.ox.ac.uk}}.\\\\
Mathematical Institute, University of Oxford\\\\}
\date{\today}
\newtheorem{thm}{Theorem}[section]
\newtheorem{cor}[thm]{Corollary}
\newtheorem{defn}[thm]{Definition}
\newtheorem{prop}[thm]{Proposition}
\newtheorem{rem}[thm]{Remark}
\newlength{\bibitemsep}\setlength{\bibitemsep}{.2\baselineskip plus .05\baselineskip minus .05\baselineskip}
\newlength{\bibparskip}\setlength{\bibparskip}{0pt}
\let\oldthebibliography\thebibliography
\renewcommand\thebibliography[1]{%
  \oldthebibliography{#1}%
  \setlength{\parskip}{\bibitemsep}%
  \setlength{\itemsep}{\bibparskip}%
}
\titleformat{\section}[block]{\sffamily\Large\bfseries\filcenter}{\thesection}{1em}{}
\numberwithin{equation}{section}
\newcommand{\xRightarrow}[2][]{\ext@arrow 0359\Rightarrowfill@{#1}{#2}}
\begin{document}
\maketitle
\begin{abstract}
\noindent \footnotesize{We prove the existence and uniqueness of solutions to a one-dimensional Stefan Problem for reflected 
SPDEs which are driven by space-time white noise. The solutions are shown to exist until almost surely positive blow-up times. 
Such equations can model the evolution of phases driven by competition at an interface, with the dynamics of the shared boundary 
depending on the derivatives of two competing profiles at this point. The novel features here are the presence of space-time 
white noise; the reflection measures, which maintain positivity for the competing profiles; and a sufficient 
condition to make sense of the Stefan condition at the boundary. We illustrate the behaviour of the solution numerically to show that this
sufficient condition is close to necessary.}
\end{abstract}

\section{Introduction}

Stefan problems have been extensively studied since the original work by Josef Stefan in 1888, and have a number of applications 
in physics, engineering, biology and finance. Broadly speaking, they describe situations where an interface moves with pressure or 
relative pressure due to competition from two types. In this paper, we will study stochastic, reflected versions of this problem in 
one-dimension, which take the form
\begin{equation}\label{movingboundaryoriginal}\begin{split}
& \frac{\partial u^1}{\partial t}= \Delta u^1 + f_1(p(t)-x,t, u^1(t,p(t)-\cdot))+ \sigma_1(p(t)-x,t,u^1(t,p(t)-\cdot))\dot{W} + \eta^1 \\ 
& \frac{\partial u^2}{\partial t}= \Delta u^2 + f_2(x-p(t),t,u^2(t, p(t)+ \cdot))+ \sigma_2(x-p(t),t,u^2(t, p(t)+ \cdot))\dot{W} + \eta^2,
\end{split}
\end{equation}
where, at any time $t \geq 0$, $u^1$ is supported on $[p(t)-1,p(t)]$, $u^2$ is supported on $[p(t),p(t)+1]$ and $u^1(t,p(t)-1) = u^1(t,p(t)) = u^2(t,p(t)) = u^2(t,p(t)+1)= 0$. The point $p(t)$ evolves according to the equation 
\begin{equation*}
p^{\prime}(t)= h\left(\frac{\partial u^1}{\partial x}(t,p(t)^-), \frac{\partial u^2}{\partial x}(t,p(t)^+) \right).
\end{equation*}
Here, $\dot{W}$ is a space-time white noise; $f_i$ and $\sigma_i$, for $i=1,2$, are the drift and volatility for each type, which are determined by the distance to the
moving boundary $p(t)$; and $(\eta^1,\eta^2)$ are reflection measures for the functions $u^1$ and $u^2$ 
respectively, keeping the profiles positive and satisfying the conditions 
\begin{enumerate}[(i)]
\item $\int_0^{\infty} \int_{\mathbb{R}} u^1(t,x) \;  \eta^1(\textrm{d}t,\textrm{d}x)=0,$ and 
\item $\int_0^{\infty} \int_{\mathbb{R}} u^2(t,x) \;  \eta^2(\textrm{d}t,\textrm{d}x)=0.$
\end{enumerate} 
The equation describes the evolution of two reflected SPDEs which share a moving boundary. The derivative of the moving boundary 
is then determined by $h$, a locally Lipschitz function of the spatial derivatives of the two SPDEs at the shared boundary. We note 
here that, in general, reflected SPDEs of this type will only be up to  $1/2$-H\"{o}lder continuous in space. We will therefore require the
functions $\sigma_i$ to satisfy suitable conditions which will ensure that the volatility decays at least linearly at the interface, which 
will be shown to be sufficient for the existence of a spatial derivative there. 

We recall the classical one-sided version of the Stefan problem, which takes the form
\begin{equation*}\begin{split}
 \frac{\partial u}{\partial t}= \Delta u, \; \; \; \; \; \; \; \;   p^{\prime}(t)= -\frac{\partial u}{\partial x}(t,p(t)^+).
\end{split}
\end{equation*}
The profile of $u$ at a given time $t$ has support in the set $[p(t), \infty)$, and we have the Dirichlet condition $u(t,p(t))=0$. 
Typically, we have that $u_0 \geq 0$, from which it follows that $u \geq 0$. The equation for $p^{\prime}$ then implies that the 
boundary recedes with ``pressure" from $u$. This can be thought of as a model for the melting of a block of ice when in contact 
with a body of water. The point $p(t)$ represents the interface between the water and ice, and $u$ the temperature profile of 
the body of water. 

Recently, stochastic perturbations of this classical problem have received significant attention. In \cite{KK}, existence and uniqueness 
for solutions to a Stefan problem where the two sides satify SPDEs driven by spatially coloured noise is proved. A particular case of the 
corresponding problem when the SPDE is driven by space-time white noise was then studied in \cite{Zheng}. It is assumed in \cite{Zheng} that the 
volatility, $\sigma(x)$ vanishes faster than $x^{3/2}$ as $x \downarrow 0$ i.e. as the moving interface is approached. More recent work 
on such problems include the models in \cite{Muller} and in \cite{KM}. In these papers, as in \cite{KK}, the two sides satisfy SPDEs 
driven by spatially coloured noise, as this makes it easier to establish the existence of the spatial derivative at the interface.
Motivated by modelling the limit order book in financial markets, the authors also include a Brownian noise term in the moving boundary.  

In addition to stochastic moving boundary problems, the study of reflected SPDEs has also attracted interest. Equations of the type 
\begin{equation*}\label{reflll}
\frac{\partial u}{\partial t} = \Delta u + f(x,u)+ \sigma(x,u) \dot{W} + \eta,
\end{equation*}
where $\dot{W}$ is space-time white noise and $\eta$ is a reflection measure, were initially studied in \cite{NP} in the case of constant volatility i.e. $\sigma \equiv 1$. Existence for the equation in the case when 
$\sigma= \sigma(x,u)$ satisfies Lipschitz and linear growth conditions in its second argument was then proved in \cite{DMP} using 
a penalization method. 
Uniqueness for varying volatility $\sigma= \sigma(x,u)$ on compact spatial domains was then shown in \cite{Xu}, with the authors 
achieving this by decoupling the obstacle and SPDE components of the problem.  

In this paper, we aim to take a first step towards studying reflected stochastic Stefan problems, by proving an existence and uniqueness 
result. The main condition required is that the volatility decays at least linearly close to the boundary, with the remaining conditions 
on the coefficients being mild. We also characterise the blow-up time for such equations, and demonstrate that it coincides with blow-up 
of one of the derivatives of the profiles at the shared boundary.

The results here extend the work in \cite{Zheng} by incorporating reflection and allowing for drift and volatility coefficients which 
depend on the spatial variable as well as the solution itself, and are Lipschitz in the solution. The condition on the decay of the 
volatility at the boundary is also relaxed, and required to be linear only, and we are able to choose a general Lipschitz function $h$ to determine the boundary evolution.  The presence of the reflection measure requires us to consider the parabolic obstacle problem, which we prove is a contraction in the space $C([0,T]; \mathscr{H})$, where $\mathscr{H}$ is the space of continuous functions on $(0,1)$ which vanish at the endpoints and have derivatives at $0$. It also requires us to produce stronger estimates on the heat kernel so that we can bound the $L^p(\Omega ; C([0,T] ; \mathscr{H}))$-norms of our processes. We are able to relax the condition on the volatility by formulating the equation in the frame relative to the shared boundary point $p(t)$. Doing so removes the need to consider effects from the boundary shift in our Picard argument, which appear in \cite{Zheng}, enabling us to obtain the stronger result. This paper can also be contrasted with \cite{Paper}. In \cite{Paper}, the volatility coefficients are not required to decay at the shared interface, at the cost of having the boundary derivative given by a function of the profiles in the space of continuous paths. In particular, the classic Stefan boundary condition does not fall within the framework of \cite{Paper}.

The outline for this paper is as follows. We begin in Section 2 by defining our notion of solutions, motivating this by some simple calculations.
Section 2 is then concluded by a statement of our main existence and uniqueness theorem. In Section 3 we turn our attention to the
deterministic obstacle problem which corresponds to our equations here, and prove a key result which will allow us to ensure that the 
reflection component will not prevent our SPDE solutions from having derivatives at the boundary. Section 4 is dedicated to establishing 
some key estimates for the main proof, with some of the technical details deferred to the appendix. In Section 5 we prove our main result,
presenting the arguments for existence and uniqueness for our problem. This is done by first truncating the problem suitably and then 
performing a Picard iteration, making use of the estimates from Sections 3 and 4. We conclude in Section 6 with simulations of 
the equations, illustrating the appearance of derivatives when the coefficients fall within our framework and briefly exploring scenarios where 
this is not the case.

\section{Notion of Solution and Statement of Main Theorem}

We begin this section by defining the space $\mathscr{H}$. The solutions to our Stefan problem will be $\mathscr{H}$-valued processes.

\begin{defn}
\begin{equation*}
\mathscr{H}:= \left\{ f \in C([0,1]) \; | \; f(0)=f(1)=0, f^{\prime}(0) \textrm{ exists} \right\}.
\end{equation*}
We equip $\mathscr{H}$ with the norm
\begin{equation*}
\|f \|_{\mathscr{H}}:= \sup\limits_{x \in (0,1]} \left| \frac{f(x)}{x} \right|.
\end{equation*}
\end{defn}

\begin{rem}
$(\mathscr{H}, \|\cdot \|_{\mathscr{H}})$ is a Banach space. In addition, we can characterise $\mathscr{H}$ as follows
\begin{equation*}
\mathscr{H}= \left\{ f \in C([0,1]) \; | \; \exists g \in C([0,1]) \textrm{ s.t. } f(x)= xg(x), \; g(1)=0  \right\}
\end{equation*}
Clearly, for any $f \in \mathscr{H}$, the function $g$ such that $f(x)=xg(x)$ is unique, and we have that 
\begin{equation*}
\|f\|_{\mathscr{H}}= \|g\|_{\infty}.
\end{equation*}
\end{rem}

\begin{defn}
Let $(\Omega, \mathscr{F}, \mathscr{F}_t, \mathbb{P})$ be a complete filtered probability space. Suppose that $\dot{W}$ is a space-time white noise defined on this space. Define for $t \geq 0$ and $A \in \mathscr{B}(\mathbb{R})$, $$W_t(A):= \dot{W}([0,t] \times A).$$ We say that $\dot{W}$ respects the filtration $\mathscr{F}_t$ if $(W_t(A))_{t \geq 0, A \in \mathscr{B}(\mathbb{R})}$ is an $\mathscr{F}_t$- martingale measure i.e.  if for every $A \in \mathscr{B}(\mathbb{R})$, $(W_t(A))_{t \geq 0}$ is an $\mathscr{F}_t$-martingale.
\end{defn}

We now derive our notion of solution, by formally multiplying our equations by test functions and integrating by parts. The following set up is as in \cite{Paper}, and we include the calculations here for completeness.

Let $(\Omega, \mathscr{F}, \mathscr{F}_t, \mathbb{P})$ be a complete filtered probability space, and $\dot{W}$ a space-time white noise which respects the filtration $\mathscr{F}_t$. Suppose that $(u^1,\eta^1, u^2, \eta^2, p)$ is an $\mathscr{F}_t$-adapted process solving (\ref{movingboundaryoriginal}). Then $p:\mathbb{R}^+ \times \Omega \mapsto \mathbb{R}$ is a $\mathscr{F}_t$-adapted process such that the paths of $p(t)$ are $C^1$ almost surely (note that, in particular, $p$ is $\mathscr{F}_t$-predictable).  Let $\varphi \in C_c^{\infty}([0,\infty) \times (0,1))$, and define the function $\phi$ by setting $\phi(t,x)= \varphi(t, p(t)-x)$. By multiplying the equation for $u^1$ in (\ref{movingboundaryoriginal}) by such a $\phi$ and integrating over space and time, interpreting the derivatives in the usual weak sense, we obtain that for $t \geq 0$,
\begin{equation}\label{change_var1}
\begin{split}
\int_{\mathbb{R}} u^1(t,x) \phi(t,x) \textrm{d}x= & \int_{\mathbb{R}} u^1(0,x) \phi(0,x) \textrm{d}x +  \int_0^t \int_{\mathbb{R}} u^1(s,x) \frac{\partial \phi}{\partial t}(s,x) \textrm{d}x\textrm{d}s \\ & + \int_0^t \int_{\mathbb{R}} u^1(s,x) \frac{\partial^2 \phi}{\partial x^2}(s,x) \textrm{d}x\textrm{d}s  \\ & + \int_0^t \int_{\mathbb{R}} f_1(p(s)-x,s,u^1(s,p(s)-\cdot)) \phi(s,x) \textrm{d}x\textrm{d}s \\ & + \int_0^t \int_{\mathbb{R}} \sigma_1(p(s)-x,s,u^1(s,p(s)-\cdot)) \phi(s,x)W(\textrm{d}x,\textrm{d}s)\\  & + \int_0^t \int_{\mathbb{R}} \phi(s,x) \; \eta^1(ds,dx). 
\end{split}
\end{equation}
We now change variables and set $v^1(t,x)= u^1(t, p(t)-x)$. Equation (\ref{change_var1}) then becomes
\begin{equation*}
\begin{split}
\int_0^1 v^1(t,x) \phi(t,p(t)-x) \textrm{d}x= & \int_0^1 v^1(0,x) \phi(0,p(0)-x) \textrm{d}x +  \int_0^t \int_0^1 v^1(s,x) \frac{\partial \phi}{\partial t}(s,p(s)-x) \textrm{d}x\textrm{d}s \\ &+ \int_0^t \int_0^1 v^1(s,x) \frac{\partial^2 \phi}{\partial x^2}(s,p(s)-x) \textrm{d}x\textrm{d}s
\\ & + \int_0^t \int_0^1 f_1(x,s,v^1(s,\cdot)) \phi(s,p(s)-x) \textrm{d}x\textrm{d}s \\ &+ \int_0^t \int_0^1 \sigma_1(x,s,v^1(s,\cdot)) \phi(s,p(s)-x)W_p(\textrm{d}x,\textrm{d}s)\\ &+  \int_0^t \int_0^1 \phi(s,p(s)-x) \; \eta_p^1(ds,dx). 
\end{split}
\end{equation*}
Here, $\dot{W}_p$ and $\eta_p^1$ are obtained by from $W$ and $\eta$ by defining, for $t \in \mathbb{R}^+$ and $A \in \mathscr{B}( \mathbb{R})$,
\begin{equation*}
\dot{W}_p([0,t] \times A)= \int_0^t \int_{p(s)-A} W(\textrm{d}s,\textrm{d}y), \; \; \; \; \; \; \eta_p^1([0,t] \times A)= \int_0^t \int_{p(s)-A} \eta(\textrm{d}y,\textrm{d}s).
\end{equation*}
Note that, since the process $p(t)$ is $\mathscr{F}_t$-predictable, $\dot{W}_p$ is then also a space time white noise which respects the filtration $\mathscr{F}_t$. Also, $\eta_p^1$ is a reflection measure for $v$, so that
\begin{equation*}
\int_0^T \int_0^1 v^1(t,x) \;  \eta_p^1(\textrm{d}t,\textrm{d}x)=0.
\end{equation*} 
Differentiating $\phi$ in time gives
\begin{equation*}\label{changeofvar}
\frac{\partial \phi}{\partial t}(t,x)= \frac{\partial \varphi}{\partial t}(t,p(t)-x) + p^{\prime}(t)\frac{\partial \varphi}{\partial x}(t,p(t)-x).\end{equation*}
It follows that 
\begin{equation*}
\begin{split}
\int_0^1 v^1(t,x) \varphi(t,x) \textrm{d}x= & \int_0^1 v^1(0,x) \varphi(0,x) \textrm{d}x +  \int_0^t \int_0^1 v^1(s,x) \frac{\partial \varphi}{\partial t}(s,x) \textrm{d}x\textrm{d}s \\ &+ \int_0^t \int_0^1 v^1(s,x)p^{\prime}(s) \frac{\partial \varphi}{\partial x}(s,x) \textrm{d}x\textrm{d}s+ \int_0^t \int_0^1 v^1(s,x) \frac{\partial^2 \varphi}{\partial x^2}(s,x) \textrm{d}x\textrm{d}s
\\ & + \int_0^t \int_0^1 f_1(x,s,v^1(s,\cdot)) \varphi(s,x) \textrm{d}x\textrm{d}s\\ & + \int_0^t \int_0^1 \sigma_1(x,s,v^1(s,\cdot)) \varphi(s,x)W_p(\textrm{d}x,\textrm{d}s) \\ &+ \int_0^t \int_0^1 \varphi(s,x) \; \eta_p^1(ds,dx).
\end{split}
\end{equation*}
We can perform similar manipulations to obtain a weak form for $v^2(t,x):= u^2(t,p(t)+x)$. This gives that for test functions $\varphi \in C_c^{\infty}([0,T] \times (0,1))$, 
\begin{equation*}
\begin{split}
\int_0^1 v^2(t,x) \varphi(t,x) \textrm{d}x= & \int_0^1 v^2(0,x) \varphi(0,x) \textrm{d}x +  \int_0^t \int_0^1 v^2(s,x) \frac{\partial \varphi}{\partial t}(s,x) \textrm{d}x\textrm{d}s \\ & - \int_0^t \int_0^1 v^2(s,x)p'(s) \frac{\partial \varphi}{\partial x}(s,x) \textrm{d}x\textrm{d}s \\ & + \int_0^t \int_0^1 v^2(s,x) \frac{\partial^2 \varphi}{\partial x^2}(s,x) \textrm{d}x\textrm{d}s
+ \int_0^t \int_0^1 f_2(x,s,v^2(s,\cdot)) \varphi(s,x) \textrm{d}x\textrm{d}s \\ & + \int_0^t \int_0^1 \sigma_2(x,s,v^2(s,\cdot)) \varphi(s,x)W_p^-(\textrm{d}x,\textrm{d}s)+ \int_0^t \int_0^1 \varphi(s,x) \;  \eta_p^2(ds,dx),
\end{split}
\end{equation*}
where $\dot{W}_p^-$ is given by
 \begin{equation*}
\dot{W}_p^-([0,t] \times A)= \dot{W}_p([0,t] \times (-A)).
\end{equation*}
We make the observation that, since $(u^1,\eta^1,u^2,\eta^2,p)$ is $\mathscr{F}_t$-adapted, $(v^1,\eta^1_p, v^2, \eta^1_p, p)$ is also $\mathscr{F}_t$-adapted. \\

We now define our notion of solution for a class of reflected SPDEs. This will prove useful when defining solutions to our moving boundary problems, and when proving existence for these via a sequence of solutions to truncated versions of the equations.

\begin{defn}\label{spde solution}

Let $(\Omega, \mathscr{F}, \mathscr{F}_t,\mathbb{P})$ be a complete filtered probability space. Let $\dot{W}$ be a space time white noise on this space which respects the filtration $\mathscr{F}_t$. Suppose that $\tilde{v}$ is a continuous $\mathscr{F}_t$-adapted process taking values in $\mathscr{H}$. Let $F: \mathscr{H}  \rightarrow \mathscr{H}$ and $h: \mathscr{H} \times \mathscr{H} \rightarrow \mathbb{R}$. For the $\mathscr{F}_t$-stopping time $\tau$, we say that the pair $(v, \eta)$ is a local solution to the reflected SPDE

\begin{equation*}
\frac{\partial v}{\partial t}= \Delta v + h(v(t,\cdot), \tilde{v}(t,\cdot)) \frac{\partial F(v)}{\partial x} + f(x,t, v(t,\cdot)) + \sigma(x,t,v(t,\cdot)) \dot{W} + \eta
\end{equation*}
on $[0,\infty) \times [0,1]$ with Dirichlet boundary conditions $v(t,0)=v(t,1)=0$ and initial data $v_0 \in \mathscr{H}^+$, with $v_0 \geq 0$, until time $\tau$, if
\begin{enumerate}[(i)]
\item For every $x \in [0,1]$ and every $t \geq 0$, $v(t,x)$ is an $\mathscr{F}_t$-measurable random variable.
\item $v \geq 0$ almost surely.
\item $v \big|_{[0,\tau) \times [0,1]} \in C([0, \tau) ; \mathscr{H})$ almost surely.
\item $v(t,x)= \infty$ for every $t \geq \tau$ and $x \in [0,1]$ almost surely.
\item $\eta$ is a measure on $(0,1) \times [0,\infty)$ such that
\begin{enumerate}
\item For every measurable map $\psi: [0,1] \times [0,\infty) \rightarrow \mathbb{R}$, 
\begin{equation*}
\int_0^t \int_0^1 \psi(x,s) \; \eta(\textrm{d}x,\textrm{d}s)
\end{equation*}
is $\mathscr{F}_t$-measurable.
\item $\int_0^{\infty} \int_0^1 v(t,x) \; \eta(\textrm{dx,dt})=0$.
\end{enumerate} 
\item There exists a localising sequence of stopping times $\tau_n \uparrow \tau$ almost surely, such that for every $\varphi \in C^{1,2}_c([0,\infty) \times [0,1])$ such that $\varphi(s,0)=\varphi(s,1)=0$ for every $s \in [0,\infty)$, 
\begin{equation}\label{111}
\begin{split}
\int_0^1 v(t \wedge \tau_n,x) \varphi(t,x) \textrm{d}x= & \int_0^1 v(0,x) \varphi(0,x) \textrm{d}x+  \int_0^{t \wedge \tau_n} \int_0^1 v(s,x) \frac{\partial \varphi}{\partial t}(s,x)\textrm{d}x\textrm{d}s \\ & +  \int_0^{t \wedge \tau_n} \int_0^1 v(s,x) \frac{\partial^2 \varphi}{\partial x^2}(s,x) \textrm{d}x\textrm{d}s \\ & - \int_0^{t \wedge \tau_n} \int_0^1 h(v(s,\cdot), \tilde{v}(s,\cdot))F(v(s,x)) \frac{\partial \varphi}{\partial x}(s,x) \textrm{d}x\textrm{d}s\\ & + \int_0^{t \wedge \tau_n} \int_0^1 f(x,s,v(s,\cdot)) \varphi(s,x) \textrm{d}x\textrm{d}s \\ & + \int_0^{t \wedge \tau_n} \int_0^1 \sigma(x,s,v(s,\cdot)) \varphi(s,x)W(\textrm{d}x,\textrm{d}s) \\ & + \int_0^{t \wedge \tau_n} \int_0^1  \varphi(s,x) \; \eta(ds,dx).
\end{split}
\end{equation}
for every $t \geq 0$ almost surely.
\end{enumerate}
\end{defn}
We say that a local solution is \emph{maximal} if there does not exist a solution to the equation on a larger stochastic interval, and we say that a local solution is \emph{global} if we can take $\tau_n = \infty$ in (\ref{111}).

The following definitions provide us with notation which will allow us to easily move between the relative frame (measured with respect to the current position of the boundary) and the fixed frame when discussing solutions to our equations.

\begin{defn}\label{frameshift}
For $p_0 \in \mathbb{R}$, we define $\Theta_{p_0}^1: \mathbb{R} \rightarrow \mathbb{R}$  such that 
$$\Theta_{p_0}^1(x) = p_0 -x.$$
For a function $p: [0,\infty) \rightarrow \mathbb{R}$ we then define $\theta^1_p: [0,\infty) \times \mathbb{R} \rightarrow [0,\infty) \times \mathbb{R}$ such that 
$$\theta^1_p(t,x):= (t,\Theta_{p(t)}^1(x)).$$
We similarly define $\Theta^2_{p_0} : \mathbb{R} \rightarrow \mathbb{R}$ such that $$\Theta_{p_0}^2(x) = x- p_0,$$ and $\theta^2_p : [0,\infty) \times \mathbb{R} \rightarrow [0,\infty) \times \mathbb{R}$ such that 
$$\theta^2_p(t,x):= (t,\Theta_{p(t)}^2(x)).$$
\end{defn}
\begin{defn}
For a space time white noise $\dot{W}$, we denote by $\dot{W}^-$ the space time white noise such that $\dot{W}^-([0,t] \times A) = \dot{W}([0,t] \times (-A))$.
\end{defn}
Before finally stating our definition for solutions to (\ref{movingboundaryoriginal}), we first outline the conditions on our coefficients $f_i$, $\sigma_i$ and $h$.
\begin{enumerate}[(i)]
\item For $i=1,2$, $f_i, \sigma_i: [0,1] \times [0,\infty) \times \mathscr{H} \rightarrow \mathbb{R}$ are measurable mappings.
\item $h: \mathscr{H} \times \mathscr{H} \rightarrow \mathbb{R}$ is a measurable map.
\item For $i=1,2$, $f_i$ is such that for every $T>0$ there exists $C_T$ such that for every $t \in [0,T]$ and every $x \in [0,1]$,
\begin{equation*}
|f_i(x,t,0)| \leq C_T.
\end{equation*}
\item For $i=1,2$, $f_i$ satisfies the local Lipschitz condition that for every $M,T>0$, there exists $C_{T,M}$ such that for every $t \in [0,T]$, every $x \in [0,1]$ and every $u,v \in \mathscr{H}$ with $\|u\|_{\mathscr{H}}, \|v\|_{\mathscr{H}} \leq M$,
\begin{equation*}
|f_i(x,t,u)-f_i(x,t,v)| \leq C_{T,M}\|u-v\|_{\mathscr{H}}.
\end{equation*}
\item For $i=1,2$, $\sigma_i$ is such that for every $T>0$ there exists $C_T$ such that for every $t \in [0,T]$ and every $x \in [0,1]$
\begin{equation*}\label{vol lin}
|\sigma_i(x,t,0)| \leq C_Tx
\end{equation*}
\item For $i=1,2$, $\sigma_i$ satisfies the local Lipschitz condition that for every $M,T>0$, there exists $C_{T,M}$ such that for every $t \in [0,T]$, every $x \in [0,1]$ and every $u,v \in \mathscr{H}$ with $\|u\|_{\mathscr{H}}, \|v\|_{\mathscr{H}} \leq M$,
\begin{equation*}\label{vol lip}
|\sigma_i(x,t,u)-\sigma_i(x,t,v)| \leq C_{T,M}x \|u-v\|_{\mathscr{H}}.
\end{equation*} 
\item $h$ is bounded on bounded sets.
\item $h$ satisfies the local Lipschitz condition that for every $M>0$, there exists $C_M$ such that for every $u,v \in \mathscr{H}$ with $\|u\|_{\mathscr{H}}, \|v\|_{\mathscr{H}} \leq M$,
\begin{equation*}
|h(u_1,v_1)-h(u_2,v_2)| \leq C_M (\|u_1-u_2\|_{\mathscr{H}}+ \|v_1- v_2\|_{\mathscr{H}}).
\end{equation*}
\end{enumerate}
\begin{rem}
An immediate consequence of the conditions on $f_i$ and $\sigma_i$ is that they satisfy local linear growth conditions. That is, for every $T,M>0$ there exists a constant $C_{T,M}$ such that for every $t \in [0,T]$, every $x \in [0,1]$ and every $u \in \mathscr{H}$ with $\|u\|_{\mathscr{H}} \leq M$, 
\begin{equation*}
f_i(x,t,u) \leq C_{T,M}\left( 1+ \|u\|_{\mathscr{H}} \right),
\end{equation*}
\begin{equation*}
\sigma_i(x,t,u) \leq C_{T,M}\left( 1+ \|u\|_{\mathscr{H}} \right).
\end{equation*}
\end{rem}
\begin{rem}
The class of permitted functions for $h$ includes as a subclass functions of the form 
\begin{equation*}
h(u,v)= \tilde{h}\left( \frac{\partial u}{\partial x}(0), \frac{\partial v}{\partial x}(0) \right), 
\end{equation*}
where $\tilde{h}$ is a Lipschitz function from $\mathbb{R} \times \mathbb{R} \rightarrow \mathbb{R}$.
\end{rem}

\begin{rem}
The $x$ term in the linear growth and Lipschitz conditions for $\sigma$ ensure that the volatility will decay at least linearly as we approach the boundary i.e. as $x \downarrow 0$. We note that, in the case when $\sigma(x,t,u)= \tilde{\sigma}(x,t,u(x))$, the typical Lipschitz condition 
\begin{equation*}
|\tilde{\sigma}(x,t,u) - \tilde{\sigma}(x,t,v)| \leq C|u-v|
\end{equation*}
implies the Lipschitz condition (\ref{vol lip}). In particular, volatility functions of the form 
\begin{equation*}
\sigma(x,t,u)= \sigma_1(x) + \sigma_2(x)u(x),
\end{equation*}
where $\frac{\sigma_1}{x}, \sigma_2 \in L^{\infty}([0,1])$, are permitted.
\end{rem}

We are now in position to state our definition for a solution to problem (\ref{movingboundaryoriginal}). Following this, we state the main result, namely existence and uniqueness for maximal solutions to these equations.

\begin{defn}
Let $(\Omega, \mathscr{F}, \mathscr{F}_t,\mathbb{P})$ be a complete filtered probability space, and $\dot{W}$ be a space time white noise on this space which respects the filtration $\mathscr{F}_t$. Suppose that $p_0 \in \mathbb{R}$, and for $i=1,2$, $u_0^i$ is such that $u_0^i \circ (\Theta_{p_0}^i)^{-1} \in \mathscr{H}^+$. We say that the quintuple $(u^1, \eta^1, u^2, \eta^2, p)$ satisfies the reflected stochastic Stefan problem
\begin{equation*}
\begin{split}
& \frac{\partial u^1}{\partial t}= \Delta u^1 + f_1(p(t)-x,t, u^1(t,p(t)-\cdot))+ \sigma_1(p(t)-x,t,u^1(t,p(t)-\cdot))\dot{W} + \eta^1 \\ 
& \frac{\partial u^2}{\partial t}= \Delta u^2 + f_2(x-p(t),t,u^2(t, p(t)+ \cdot))+ \sigma_2(x-p(t),t,u^2(t, p(t)+ \cdot))\dot{W} + \eta^2,
\\ & p^{\prime}(t) = h(u^1(t,p(t)-\cdot), u^2(t,p(t)+\cdot)),
\end{split}
\end{equation*}
with initial data $(u^1_0,u^2_0, p_0)$, up to the $\mathscr{F}_t$-stopping time $\tau$ if
\begin{enumerate}[(i)]
\item $(v^1, \tilde{\eta}^1):=(u^1 \circ (\theta^1_p)^{-1}, \eta^1 \circ (\theta^1_p)^{-1})$ solves, in the sense of Definition \ref{spde solution}, the reflected SPDE
\begin{equation*}
\frac{\partial v^1}{\partial t}= \Delta v^1 - h(v^1(s,\cdot),v^2(s,\cdot) )\frac{\partial v^1}{\partial x} + f_1(x,t,v^1(t,\cdot)) + \sigma_1(x,t,v^1(t,\cdot))\dot{W} + \tilde{\eta}^1 
\end{equation*}
with Dirichlet boundary conditions $v^1(0)=v^1(1)=0$ and initial data $u_0^1 \circ (\Theta_{p_0}^1)^{-1}$, until time $\tau$.
\item $(v^2, \tilde{\eta}^2):=(u^2 \circ (\theta^2_p)^{-1}, \eta^2 \circ (\theta^2_p)^{-1})$ solves, in the sense of Definition \ref{spde solution}, the reflected SPDE
\begin{equation*}
\frac{\partial v^2}{\partial t}= \Delta v^2 + h(v^1(s,\cdot),v^2(s,\cdot) )\frac{\partial v^2}{\partial x} + f_2(x,t,v^2(t,\cdot)) + \sigma_2(x,t,v^2(t,\cdot))\dot{W}^- + \tilde{\eta}^2 
\end{equation*}
with Dirichlet boundary conditions $v^2(0)=v^2(1)=0$ and initial data $u_0^2 \circ (\Theta_{p_0}^2)^{-1}$, until time $\tau$.
\item $p(t)=p_0 + \int_0^t h(v^1(s,\cdot),v^2(s,\cdot) ) \; \textrm{d}s$.
\end{enumerate}
We refer to $(v^1, \tilde{\eta}^1, v^2, \tilde{\eta}^2)$ as the solution to the moving boundary problem in the relative frame.
\end{defn}
\begin{thm}\label{main}
There exists a unique maximal solution to the reflected stochastic Stefan problem.
\end{thm}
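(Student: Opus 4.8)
The plan is to reduce the moving-boundary problem to the coupled pair of reflected SPDEs in the relative frame, to solve that pair by a truncation-and-Picard scheme built on the obstacle estimate of Section 3 and the heat-kernel estimates of Section 4, and finally to reconstruct the boundary $p$ and the fixed-frame profiles $u^1,u^2$. First I would remove the locality of the coefficients by truncation. Since $f_i,\sigma_i$ are only locally Lipschitz in the $\mathscr{H}$-norm and $h$ is merely bounded on bounded sets, for each $M>0$ I would fix a Lipschitz cutoff $\chi_M:\mathscr{H}\to[0,1]$ which equals $1$ on $\{\|\cdot\|_{\mathscr{H}}\le M\}$ and vanishes off $\{\|\cdot\|_{\mathscr{H}}\le M+1\}$, and replace $f_i(x,t,v)$ by $\chi_M(v)f_i(x,t,v)$, $\sigma_i$ analogously, and $h(v^1,v^2)$ by $\chi_M(v^1)\chi_M(v^2)h(v^1,v^2)$. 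The truncated coefficients are then globally Lipschitz in $\mathscr{H}$ (retaining the factor of $x$ for $\sigma_i$), while the truncated transport speed $h$ is globally bounded and Lipschitz. Writing $\tau_M:=\inf\{t:\|v^1(t,\cdot)\|_{\mathscr{H}}\vee\|v^2(t,\cdot)\|_{\mathscr{H}}\ge M\}$, any solution of the truncated system agrees with a solution of the genuine system on $[0,\tau_M)$.

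For the truncated system I would run a Picard iteration in $L^p(\Omega;C([0,T];\mathscr{H}))^2$. Given an iterate $(v^{1,(n)},v^{2,(n)})$, set the transport speed $a^{(n)}(s):=h(v^{1,(n)}(s,\cdot),v^{2,(n)}(s,\cdot))$ and form, for each $i$, the free field
\begin{multline*}
z^{i}(t,x)=(P_t v^i_0)(x)+\int_0^t\!\!\int_0^1 G(t-s,x,y)f_i\bigl(y,s,v^{i,(n)}(s,\cdot)\bigr)\,dy\,ds \\ \mp\int_0^t\!\!\int_0^1 \partial_y G(t-s,x,y)\,a^{(n)}(s)\,v^{i,(n)}(s,y)\,dy\,ds \\ +\int_0^t\!\!\int_0^1 G(t-s,x,y)\sigma_i\bigl(y,s,v^{i,(n)}(s,\cdot)\bigr)\,W(dy,ds),
\end{multline*}
where $G$ is the Dirichlet heat kernel on $[0,1]$ and the spatially-constant transport term has been integrated by parts onto $\partial_y G$, so that no spatial derivative of $v^{i,(n)}$ is required. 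The next iterate $(v^{i,(n+1)},\tilde\eta^{i,(n+1)})$ is obtained by solving the deterministic parabolic obstacle problem with input $z^i$. Here the contraction property of the obstacle map on $C([0,T];\mathscr{H})$ from Section 3 guarantees that the reflection preserves membership in $\mathscr{H}$ and does not destroy the derivative at $0$, while the heat-kernel bounds of Section 4 control $z^i$ in the $\mathscr{H}$-norm: the linear decay $|\sigma_i(x,t,u)|\le C_{T,M}\,x(1+\|u\|_{\mathscr{H}})$ is exactly what keeps $\sup_{x\in(0,1]}|z^i(t,x)/x|$ finite, and similarly for the $\partial_y G$ transport term. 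Combining these with the global Lipschitz bounds, the map $\Phi:(v^{1,(n)},v^{2,(n)})\mapsto(v^{1,(n+1)},v^{2,(n+1)})$ is a contraction on $L^p(\Omega;C([0,T'];\mathscr{H}))^2$ for $T'$ small depending only on $M$; iterating over consecutive intervals yields a unique global-in-time solution of the truncated system.

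Finally I would patch and reconstruct. By uniqueness for the truncated problems, the solution with parameter $M'$ agrees with that for $M\le M'$ up to $\tau_M$, so the family is consistent; setting $\tau:=\lim_{M\to\infty}\tau_M$ and $v^i\equiv\infty$ on $[\tau,\infty)$ produces a maximal local solution of the relative-frame system in the sense of Definition \ref{spde solution}. Defining $p(t)=p_0+\int_0^t h(v^1(s,\cdot),v^2(s,\cdot))\,ds$, which is $C^1$ on $[0,\tau)$ because $s\mapsto h(v^1(s,\cdot),v^2(s,\cdot))$ is continuous, and pushing $(v^i,\tilde\eta^i)$ forward by $\theta^i_p$, gives a quintuple $(u^1,\eta^1,u^2,\eta^2,p)$ solving the original Stefan problem; adaptedness and the complementarity conditions transfer directly since $\theta^i_p$ is an adapted change of variables acting only in space. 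Uniqueness of the maximal solution then follows from the local uniqueness: two maximal solutions agree on $[0,\tau_M)$ for every $M$ and hence share the same blow-up time and coincide.

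The main obstacle throughout is the pair of $\mathscr{H}$-norm estimates, namely showing that the stochastic convolution divided by $x$ is bounded uniformly in $x\in(0,1]$ in $L^p(\Omega;C([0,T]))$ and that the obstacle correction does not spoil this. This is precisely where the linear decay of $\sigma_i$ at the interface and the sharpened heat-kernel estimates of Section 4 are indispensable: without the factor of $x$ the white-noise solution is only $1/2$-H\"older and possesses no derivative at the boundary, so the Stefan condition could not even be posed.
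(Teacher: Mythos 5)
Your overall strategy is the paper's: truncate to make the coefficients globally Lipschitz, run a Picard iteration in $L^p(\Omega;C([0,T];\mathscr{H}))^2$ by composing the mild-form free field with the obstacle map (using the $C([0,T];\mathscr{H})$-contraction of Theorem \ref{Obstacle H} and the heat-kernel estimates of Propositions \ref{drift1}--\ref{vol1}), then concatenate the truncated solutions along the stopping times $\tau_M$ and deduce maximality and uniqueness. The reconstruction of $p$ and the fixed-frame quintuple, and the identification of the blow-up time, are likewise as in the paper.

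The one place where your scheme deviates and develops a genuine gap is the truncation of the transport term. You cut off only the speed, replacing $h$ by $\chi_M(v^1)\chi_M(v^2)h(v^1,v^2)$, but leave the transported field untouched, so your free field contains $\int_0^t\int_0^1 \partial_y G(t-s,x,y)\,a^{(n)}(s)\,v^{i,(n)}(s,y)\,dy\,ds$ with $v^{i,(n)}$ untruncated. The Picard difference then produces the cross term $(a^{(n)}-a^{(n-1)})\,v^{i,(n-1)}$, whose $\mathscr{H}$-norm is bounded only by $C\bigl(\|v^{1,(n)}-v^{1,(n-1)}\|_{\mathscr{H},s}+\|v^{2,(n)}-v^{2,(n-1)}\|_{\mathscr{H},s}\bigr)\|v^{i,(n-1)}\|_{\mathscr{H},s}$; this is quadratic in the iterates and does not close the contraction without a separate uniform-in-$n$ a priori moment bound on $\|v^{i,(n)}\|_{\mathscr{H},T}$ (and a H\"older step that mismatches the exponents). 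The paper avoids this by truncating the profile itself inside the transport term via the map $F_M(u)(x)=x\bigl[\tfrac{u(x)}{x}\wedge M\bigr]$, so that the transported quantity is $g_n=h_M(v^{1,n},v^{2,n})F_M(v^{1,n})$, whose two factors are both bounded by constants depending on $M$ and globally Lipschitz in the iterates; this is exactly what yields the linear estimate (\ref{trunc2}). You should either adopt $F_M$ (or an equivalent bounded, Lipschitz truncation of the field) in the transport term, or supply the missing uniform moment bound. A secondary, smaller omission: your final uniqueness step (``two maximal solutions agree on $[0,\tau_M)$ for every $M$'') implicitly assumes that the $\mathscr{H}$-norm exit times form a valid localising sequence for an \emph{arbitrary} maximal solution; this is the content of Proposition \ref{local} and needs to be established rather than assumed.
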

\begin{rem}
It is immediate from the definition that existence of a unique maximal solution to the reflected stochastic Stefan problem is equivalent to the existence of a unique maximal solution to the system of coupled reflected SPDEs
\begin{equation}\label{movingboundary}\begin{split}
& \frac{\partial v^1}{\partial t} = \Delta v^1 -h(v^1(t, \cdot), v^2(t,\cdot) )\frac{\partial v^1}{\partial x}+f_1(x, t, v^1(t,\cdot) )+ \sigma_1(x,t, v^1(t,\cdot))\dot{W} + \eta^1, \\ 
& \frac{\partial v^2}{\partial t} = \Delta v^2 +h(v^1(t, \cdot), v^2(t,\cdot) )\frac{\partial v^2}{\partial x}+ f_2(x,t, v^2(t,\cdot) )+ \sigma_2(x,t, v^2(t,\cdot) )\dot{W}^- + \eta^2.
\end{split}
\end{equation}
\end{rem}

\section{The Deterministic Obstacle Problem and the Corresponding Bounds in $\mathscr{H}$}
We begin this section by discussing some relevant work on a deterministic obstacle problem.  The obstacle problem in the form given here was originally discussed in \cite{NP}. The space $C_0((0,1))$ here denotes the space of continuous functions on $(0,1)$ which vanish at the endpoints.

\begin{defn}
Let $z \in C([0,T] \times [0,1])$ with $z(t,\cdot) \in C_0((0,1))$ for every $t \in [0,T]$ and $z(0, \cdot) \leq 0$. We say that the pair $(w, \eta)$ satisfies the heat equation with obstacle $z$ if:

\begin{enumerate}[(i)]
\item $w \in C([0,T] \times [0,1])$, $w(t,0)=w(t,1)=0$, $w(0,x) \equiv 0$ and $w \geq z$.
\item $\eta$ is a measure on $(0,1) \times [0,T]$.
\item $w$ weakly solves the PDE 
\begin{equation*}
\frac{\partial w}{\partial t}= \frac{\partial^2 w}{\partial x^2} + \eta
\end{equation*}
That is, for every $t \in [0,T]$ and every $\phi \in C^2((0,1)) \cap C_0((0,1))$, $$\int_0^1 w(t,x) \phi(x) \textrm{d}x= \int_0^t \int_0^1 w(s,x)\phi^{\prime \prime}(x) \textrm{d}x\textrm{d}s + \int_0^t \int_0^1 \phi(x) \; \eta(\textrm{d}x,\textrm{d}s).$$

\item $\int_0^t \int_0^1 (w(s,x)-z(s,x)) \; \eta(\textrm{d}x,\textrm{d}s)=0$.

\end{enumerate}
\end{defn}
Existence and uniqueness for solutions to this problem was proved in \cite{NP}. It was also shown that the difference between two solutions can be bounded in the $L^{\infty}$-norm by the difference in the $L^{\infty}$-norm of the obstacles. We now adapt this result, and show that solutions to the obstacle problem in the case when the obstacle lies in $C([0,T];\mathscr{H})$ are themselves in the space $C([0,T];\mathscr{H})$. We also prove the corresponding estimate, that we can control the $C([0,T];\mathscr{H})$-norm of the difference in the solutions by the $C([0,T];\mathscr{H})$-norm of the difference of the obstacle. This will be required to prove existence for our reflected SPDEs via a Picard argument.

\begin{thm}\label{Obstacle H}
Suppose that $z_1,z_2 \in C([0,T] ; \mathscr{H})$. Let $w_1, w_2$ solve the corresponding parabolic obstacle problems with obstacles $z_1$ and $z_2$. Then we have that $w_1, w_2 \in C([0,T]; \mathscr{H})$ and 
\begin{equation*}
\|w_1 - w_2\|_{\mathscr{H},T} \leq \|z_1 - z_2\|_{\mathscr{H},T}.
\end{equation*}
\end{thm}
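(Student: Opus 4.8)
The plan is to prove the quantitative contraction first, by adapting the $L^\infty$-contraction of \cite{NP} but replacing the constant comparison barrier by a spatially harmonic, linear one. Write $\|\cdot\|_{\mathscr{H},T}$ for $\sup_{t \le T}\|\cdot(t,\cdot)\|_{\mathscr{H}}$ and set $N := \|z_1 - z_2\|_{\mathscr{H},T}$, which may be assumed finite. By the definition of the $\mathscr{H}$-norm this says exactly that $|z_1(t,x) - z_2(t,x)| \le Nx$ for all $(t,x)$. The key observation is that $(t,x) \mapsto Nx$ is caloric, being constant in time and harmonic in space, so it plays precisely the role that the constant $M = \|z_1-z_2\|_{\infty,T}$ plays in the $L^\infty$ argument, while dominating the obstacle difference near $x=0$. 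I would then show $w_1 - w_2 \le Nx$; the reverse inequality, and hence $|w_1 - w_2| \le Nx$, follow by symmetry, and the latter is exactly the claim $\|w_1-w_2\|_{\mathscr{H},T} \le N$.

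To prove $w_1 - w_2 \le Nx$ I would run an energy estimate on $b := w_1 - w_2 - Nx$. Note $b(0,\cdot) = -Nx \le 0$, and $b^+$ vanishes at $x=0$ and $x=1$ since $b \le 0$ there (at $x=1$, $b = -N \le 0$). Testing the weak equation for $w_1 - w_2$ against $b^+$ (after the usual time-regularisation, e.g.\ Steklov averaging, to justify the manipulations since the $\eta_i$ are only measures) gives
\[
\frac{1}{2}\frac{d}{dt}\int_0^1 (b^+)^2\,dx = -\int_0^1 \mathbbm{1}_{\{b>0\}}(\partial_x b)^2\,dx + \int_{\{b>0\}} b^+\,d\eta_1 - \int_{\{b>0\}} b^+\,d\eta_2,
\]
where the $Nx$ term contributes nothing precisely because it is caloric and $b^+$ vanishes on the lateral boundary. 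On $\{b > 0\}$ one has $w_1 > w_2 + Nx \ge z_2 + Nx \ge z_1$, so $w_1 > z_1$ and the complementarity condition forces $\eta_1$ to vanish there, killing the first $\eta$-term; the second is $\le 0$ since $\eta_2 \ge 0$ and $b^+ \ge 0$. Hence $\tfrac{d}{dt}\|b^+(t,\cdot)\|_{L^2}^2 \le 0$, and since $b^+(0,\cdot)=0$ we conclude $b^+ \equiv 0$.

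For the membership statement $w_i \in C([0,T];\mathscr{H})$ I would argue by approximation, combining the contraction just established with completeness of $\mathscr{H}$. Writing $z(t,x) = x g(t,x)$ with $g \in C([0,T]\times[0,1])$ and $g(t,1)=0$, approximate $g$ uniformly by functions $g_n$ smooth in $x$ with $g_n(t,1)=0$ (arranged so that $z_n(0,\cdot) = x g_n(0,\cdot) \le 0$); then $z_n := x g_n$ are smooth obstacles with $\|z_n - z\|_{\mathscr{H},T} = \|g_n - g\|_{\infty,T} \to 0$. For such smooth obstacles, parabolic obstacle regularity gives $w_n(t,\cdot) \in C^1([0,1])$, hence $w_n(t,\cdot) \in \mathscr{H}$. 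The contraction makes $(w_n)$ Cauchy in $C([0,T];\mathscr{H})$, so it converges there to some $\bar w$; since $\|\cdot\|_\infty \le \|\cdot\|_{\mathscr{H}}$, this is also uniform convergence, while the $L^\infty$-contraction of \cite{NP} gives $w_n \to w$ uniformly, so $\bar w = w$ and $w \in C([0,T];\mathscr{H})$.

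The hard part will be the membership claim, and specifically the regularity of $w_n$ up to the fixed Dirichlet boundary $x=0$, where the obstacle touches ($z_n(t,0)=w_n(t,0)=0$), so that $\partial_x w_n(t,0)$ exists: the contraction by itself only yields boundedness of $w/x$, not the continuity of $w/x$ at $0$ that membership in $\mathscr{H}$ demands, and it is exactly the behaviour of the reflection near this touching boundary that must be controlled. An alternative, more hands-on route is the Green's-function representation $w(t,x) = \int_0^t\int_0^1 G_{t-s}(x,y)\,\eta(dy,ds)$, from which $w(t,x)/x \to \int_0^t\int_0^1 \partial_x G_{t-s}(0,y)\,\eta(dy,ds)$ once one shows that the near-boundary mass of $\eta$ is controlled by $\|z\|_{\mathscr{H},T}$ well enough to justify dominated convergence; supplying this estimate is the crux. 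The remaining technical point is the time-regularisation needed to legitimise testing the weak formulation against the non-smooth, time-dependent function $b^+$.
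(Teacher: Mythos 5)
Your proposal is correct and shares the paper's overall architecture: compare $w_1-w_2$ against the caloric barrier $x\,\|z_1-z_2\|_{\mathscr{H},T}$ via an energy estimate on the positive part, conclude by symmetry, and obtain membership in $C([0,T];\mathscr{H})$ by smooth approximation of the obstacle plus the contraction and completeness. The one genuine divergence is in how the energy estimate is made rigorous. The paper does not test the obstacle problem itself; it runs the computation on the penalised approximations $w_i^{\epsilon}$ solving $\partial_t w_i^{\epsilon} = \Delta w_i^{\epsilon} + g_{\epsilon}(w_i^{\epsilon}-z_i)$ with $g_{\epsilon}$ decreasing, where the role of your complementarity argument is played by the elementary observation that on $\{k^{\epsilon}\geq 0\}$ one has $w_1^{\epsilon}-z_1 \geq w_2^{\epsilon}-z_2$ and hence $g_{\epsilon}(w_1^{\epsilon}-z_1)\leq g_{\epsilon}(w_2^{\epsilon}-z_2)$; the estimate then passes to the limit $\epsilon\downarrow 0$ using $w_i^{\epsilon}\uparrow w_i$ from \cite{NP}. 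This buys a completely classical integration by parts (no measures appear), at the cost of invoking the penalisation machinery. Your route is conceptually more direct and isolates the right structural fact (that $\eta_1$ cannot charge $\{w_1-w_2>Nx\}$), but it leaves real work outstanding: justifying the chain rule and the identity for $\tfrac{d}{dt}\|b^+\|_{L^2}^2$ when the equation for $w_1-w_2$ only holds weakly with measure right-hand sides, and when the a priori spatial regularity of $w_i$ needed for $\partial_x b^+\in L^2$ is itself usually extracted from the penalised scheme. You flag this honestly, but as written it is the step that would need the most care; the paper's choice of penalisation is precisely what makes it free. For the membership part your argument matches the paper's, which likewise approximates by smooth obstacles and cites parabolic obstacle regularity (Theorem 4.1 of the Arnarson--Eriksson reference) for the smooth case; your identification of the boundary behaviour of $w_n/x$ at the touching point $x=0$ as the crux is accurate, and the paper delegates exactly that point to the citation rather than proving it.
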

\begin{proof}
Define $\delta := \|z_1-z_2\|_{\mathscr{H},T}$. For $i=1,2$ and $\epsilon >0$, let $w_i^{\epsilon}$ solve the following penalised equations
\begin{equation*}\label{PDE}
\frac{\partial w_i^{\epsilon}}{\partial t} = \Delta w_i^{\epsilon} + g_{\epsilon}(w_i^{\epsilon} - z),
\end{equation*}
with Dirichlet boundary conditions $w_i^{\epsilon}(t,0) = w_i^{\epsilon}(t,1) = 0$ and zero initial data, where we define $$g_{\epsilon}(x):= \frac{1}{\epsilon} \arctan(( x \wedge 0)^2).$$
Then we know that $w_i^{\epsilon} \uparrow w_i$ for $i=1,2$ (see, for example, the proof of Theorem 1.4 in \cite{NP}). We define $k^{\epsilon}(t,x):= w_1^{\epsilon}- w_2^{\epsilon} - x \delta.$ Then $k^{\epsilon}$ solves the equation
\begin{equation*}
\frac{\partial k^{\epsilon}}{\partial t} = \Delta k^{\epsilon} + g_{\epsilon}(w_1^{\epsilon} -z_1)- g_{\epsilon}(w_2^{\epsilon} -z_2),
\end{equation*}
with boundary conditions $k^{\epsilon}(t,0)=0$, $k^{\epsilon}(t,1)=-\delta$ and negative initial data $k^{\epsilon}(0,x)= -x \delta$. Testing this equation with $(k^{\epsilon})^+$ and integrating over space and time, we obtain that 
\begin{equation}\label{ob_ep}
\begin{split}
\frac{1}{2}\| (k^{\epsilon}_T)^+ \|_{L^2}^2 = &  \frac{1}{2}\| (k^{\epsilon}_0)^+ \|_{L^2}^2 -\int_0^T \int_0^1 \left| \frac{\partial (k_t^{\epsilon})^+}{\partial x}\right|^2 \textrm{d}x\textrm{d}t + \int_0^T \int_0^1 g_{\epsilon}(w_1^{\epsilon}(t,x)-z_1(t,x))(k^{\epsilon}(t,x))^+ \textrm{d}x \textrm{d}t \\ & - \int_0^T \int_0^1 g_{\epsilon}(w_2^{\epsilon}(t,x)-z_2(t,x))(k^{\epsilon}(t,x))^+ \textrm{d}x \textrm{d}t.
\end{split}
\end{equation}
Note that $(k_0^{\epsilon})^+ =0$ and $g_{\epsilon}$ is a decreasing function. Also, on the set $k^{\epsilon} \geq 0$, $w_1^{\epsilon} - z_1 \geq w^{\epsilon}_2 - z_2$. It follows that the right hand side of (\ref{ob_ep}) is negative. Therefore, $(k^{\epsilon})^+=0$, and so 
\begin{equation*}
w_1^{\epsilon}-w_2^{\epsilon} \leq x \delta = x \|z_1 - z_2 \|_{\mathscr{H},T}.
\end{equation*}
Interchanging $w_1^{\epsilon}$ and $w_2^{\epsilon}$, we obtain that, for every $\epsilon >0$,
\begin{equation*}
\sup\limits_{t \in [0,T]} \sup\limits_{x \in (0,1]} \left| \frac{w_1^{\epsilon}(t,x)}{x}- \frac{w_2^{\epsilon}(t,x)}{x} \right| \leq \|z_1 - z_2 \|_{\mathscr{H},T}.
\end{equation*}
Letting $\epsilon \downarrow 0$, we deduce that
\begin{equation}\label{20}
\sup\limits_{t \in [0,T]} \sup\limits_{x \in (0,1]} \left| \frac{w_1(t,x)}{x}- \frac{w_2(t,x)}{x} \right| \leq \|z_1 - z_2 \|_{\mathscr{H},T}.
\end{equation}
We now argue that the functions $w_i \in C([0,T] ; \mathscr{H})$. Note that we have $C([0,T];\mathscr{H})$ regularity for solutions to the obstacle problem provided that the obstacle is smooth (see, for example, Theorem 4.1 in \cite{AE}). Let $z_n$ be a sequence of smooth obstacles such that $z_n \rightarrow z$ in $C([0,T];\mathscr{H})$, and denote by $w_n$ the solution to the obstacle problem with obstacle $z_n$. Then, by (\ref{20}), the sequence $w_n$ is Cauchy in $C([0,T];\mathscr{H})$, and so converges in $C([0,T];\mathscr{H})$ to some $\tilde{w}$. We also have that 
\begin{equation*}
\sup\limits_{t \in [0,T]} \sup\limits_{x \in (0,1]} \left| \frac{w_n(t,x)}{x}- \frac{w(t,x)}{x} \right| \leq \|z_n -z\|_{\mathscr{H},T} \rightarrow 0.
\end{equation*}
Therefore, we have that $\tilde{w}=w$, and so $w \in C([0,T];\mathscr{H})$. This concludes the proof.
\end{proof}
\section{Key Estimates}

In order to prove existence for our reflected SPDE, we will  perform a Picard iteration in the space $L^p(\Omega; C([0,T]; \mathscr{H}))$ to show that solutions of (\ref{movingboundary}) exist. Having defined $(v^{1,n},\eta^{1,n},v^{2,n},\eta^{2,n})$, we will define our $(n+1)^{\textrm{th}}$ approximation of a solution to be given by the solutions of 
\begin{equation*}\begin{split}
\frac{\partial v^{1,n+1}}{\partial t} = & \Delta v^{1,n+1} + f(x,t,v^{1,n}(t,\cdot)) - \frac{\partial v^{1,n}}{\partial x}p_{n+1}^{\prime}(t) + \sigma(x,t,v^{1,n}(t,\cdot)) \dot{W}+ \eta^{1,n+1},
\end{split}
\end{equation*}
\begin{equation*}\begin{split}
\frac{\partial v^{2,n+1}}{\partial t} = & \Delta v^{2,n+1} + f(x,t,v^{2,n}(t,\cdot)) + \frac{\partial v^{2,n}}{\partial x}p_{n+1}^{\prime}(t) + \sigma(x,t,v^{2,n}(t,\cdot)) \dot{W}^- + \eta^{2,n+1}.
\end{split}
\end{equation*}
\begin{equation*}
p_{n+1}(t) = p(0) + \int_0^t h\left( v^{1,n}(s,\cdot), v^{2,n}(s,\cdot)  \right) \textrm{d}s.
\end{equation*}
In order to control the difference $\mathbb{E}\left[ \|v^{1,n+1}-v^{1,n}\|^p_{\mathscr{H},T} \right]$, we will use Theorem \ref{Obstacle H}. This gives
\begin{equation*}
\mathbb{E}\left[ \|v^{1,n+1}-v^{1,n}\|^p_{\mathscr{H},T} \right] \leq 2^p \mathbb{E}\left[ \|z^{1,n+1}-z^{1,n}\|^p_{\mathscr{H},T} \right],
\end{equation*}
where $z^{1,n+1}$ solves
\begin{equation*}
\frac{\partial z^{1,n+1}}{\partial t} = \Delta z^{1,n+1} - \frac{\partial v^{1,n}}{\partial x}p_{n+1}^{\prime}(t)+ f(x,t,v^{1,n}(t,\cdot)) + \sigma(x,t, v^{1,n}(t,\cdot)) \frac{\partial^2 W}{\partial x \partial t}.
\end{equation*}
We deduce in the same way that $\mathbb{E}\left[ \|v^{2,n+1}-v^{2,n}\|^p_{\mathscr{H},T} \right] \leq C_p\mathbb{E}\left[ \|z^{2,n+1}-z^{2,n}\|^p_{\mathscr{H},T} \right],$ where $z^{2,n+1}$ solves
\begin{equation*}
\frac{\partial z^{2,n+1}}{\partial t} = \Delta z^{2,n+1} + \frac{\partial v^{2,n}}{\partial x}p_{n+1}^{\prime}(t)+ f(x,t, v^{2,n}(t,\cdot)) + \sigma(x,t, v^{2,n}(t,\cdot)) \frac{\partial^2 W}{\partial x \partial t}.
\end{equation*}
By writing this in mild form, noting that the mild and weak forms are equivalent for such equations, we see that we will require estimates on the terms 
\begin{enumerate}[(i)]
\item \begin{equation}\label{A1}
\int_0^t \int_0^1 \frac{1}{x}G(t-s,x,y)\left[ f(x, s, v^{i,n}(s,\cdot))-f(x,s, v^{i-1,n}(s,\cdot))\right] \textrm{d}y \textrm{d}s.
\end{equation}
\item \begin{equation}\label{A2}
\int_0^t \int_0^1 \frac{y}{x}G(t-s,x,y)\left[ \frac{\sigma(x, s, v^{i,n}(s,\cdot))-\sigma(x, s, v^{i-1,n}(s,\cdot))}{y}\right] W(\textrm{d}y, \textrm{d}s).
\end{equation}
\item \begin{equation}\label{A3}
\int_0^t \int_0^1 \frac{y}{x}\frac{\partial G}{\partial y}(t-s,x,y)\left[\frac{v^{i,n}(s,y)}{y}(p^n)^{\prime}(s)-\frac{v^{i,n-1}(s,y)}{y}(p^{n-1})^{\prime}(s)\right] \textrm{d}y \textrm{d}s.
\end{equation}
\end{enumerate}
Obtaining such estimates will be the focus in this section.

\subsection{Heat Kernel Estimates}
In this section we will state and prove the heat kernel estimates which will be crucial in proving existence and uniqueness for our equations. In particular, they allow us to show that solutions to certain SPDEs lie in the space $C([0,T];\mathscr{H})$, and also enable us to obtain estimates in $L^p(\Omega; C([0,T]; \mathscr{H}))$ for these solutions. We define $G$ to be the Dirichlet heat kernel on $[0,1]$, so 
\begin{equation*}
G(t,x,y):= \frac{1}{\sqrt{4 \pi t}}\sum\limits_{n =- \infty}^{\infty} \left[ \exp\left( - \frac{(x-y+2n)^2}{4t} \right) - \exp\left(- \frac{(x+y+2n)^2}{4t} \right) \right].
\end{equation*}
We then define for $t >0$, $y \in [0,1]$ and $x \in (0,1]$
\begin{equation*}
\tilde{G}(t,x,y):= \frac{y}{x}G(t,x,y).
\end{equation*}
For $t>0$, $y \in [0,1]$ and $x=0$, we set
\begin{equation*}
\tilde{G}(t,x,y):= y\frac{\partial G}{\partial x}(t,0,y).
\end{equation*}
We also define 
\begin{equation*}
\tilde{H}(t,x,y):= \frac{y}{x}\frac{\partial G}{\partial y}(t,x,y)
\end{equation*}
for $t \in [0,T]$, $x \in (0,1]$ and $y \in [0,1]$, and set 
\begin{equation*}
\tilde{H}(t,0,y):= y \frac{\partial^2 G}{\partial y \partial x}(t,0,y)
\end{equation*}
for $t \in [0,T]$ and $y \in [0,1]$.
\begin{rem}\label{G}
Note that 
\begin{equation*}\begin{split}
G(t,x,y)= & \frac{1}{\sqrt{4 \pi t}} \left[ \exp\left( - \frac{(x-y)^2}{4t} \right) - \exp\left(- \frac{(x+y)^2}{4t} \right) \right]\\ & +\frac{1}{\sqrt{4 \pi t}} \left[ \exp\left( - \frac{(x-y-2)^2}{4t} \right) - \exp\left(- \frac{(x+y-2)^2}{4t} \right) \right] + L(t,x,y),
\end{split}
\end{equation*}
where $L$ is a smooth function on $[0,T] \times [0,1] \times [0,1]$, vanishing on $t=0$ and $x=0,1$. Since the arguments for bounding the second term on the right hand side are similar to those for bounding the first, and estimates for $L$ are simple due to $L$ being smooth, it is sufficient to prove the estimates for the first term,
\begin{equation*}
G_1(t,x,y):=\frac{1}{\sqrt{4 \pi t}} \left[ \exp\left( - \frac{(x-y)^2}{4t} \right) - \exp\left(- \frac{(x+y)^2}{4t} \right) \right].
\end{equation*}
\end{rem}

\begin{prop}\label{Heat Kernel}
The following estimates hold for $G$, $\tilde{G}$ and $\tilde{H}$.
\begin{enumerate}
\item For $T>0$, $\exists \; C_T >0$ such that for every $t \in [0,T]$
\begin{equation*}
\sup\limits_{x \in (0,1]}\int_0^1 \left| \frac{1}{x}G(t,x,y) \right| \textrm{d}y \leq \frac{C_T}{\sqrt{t}}.
\end{equation*}
\item For $T>0$, $\exists \; C_T >0$ such that for every $t \in [0,T]$,  $$\sup\limits_{x \in [0,1]} \int_0^1 \tilde{G}(s,x,y)^2 \textrm{d}y \leq \frac{C_T}{\sqrt{t}}.$$
\item For $T >0$ and $q \in (1,2)$, $\exists \; C_{T,q}>0$ such that, for every $x, y \in [0,1]$
\begin{equation*}
\sup\limits_{t \in [0,T]} \int_0^t \left[ \int_0^1 \left( \tilde{G}(s,x,z)- \tilde{G}(t,y,z) \right)^2 \textrm{d}y \right]^q \textrm{d}s \leq C_{T,q}|x-y|^{(2-q)/3}.
\end{equation*}
\item For $T>0$ and $q \in (1,2)$, $\exists \; C_{T,q} >0$ such that for $0 \leq s \leq t \leq T$ 
\begin{equation*}
\sup\limits_{x \in [0,1]} \int_0^s \left[ \int_0^1 (\tilde{G}(t-r,x,y)- \tilde{G}(s-r,x,y))^2 \textrm{d}y \right]^q \textrm{d}r \leq C_{T,q}|t-s|^{(2-q)/2}.
\end{equation*}
\item For $T>0$, $\exists C_T>0$ such that for every $t \in [0,T]$
 \begin{equation*}
\sup\limits_{x \in [0,1]} \int_0^1 \left| \tilde{H}(t,x,y) \right| \textrm{d}y \leq \frac{C_T}{\sqrt{t}}.
\end{equation*}
\end{enumerate}
\end{prop}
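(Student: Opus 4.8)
The plan is to reduce everything to the leading image term $G_1$ using Remark \ref{G}, since the second image term is handled by the reflection symmetry $x\mapsto 1-x$ and $L$ is smooth, and then to exploit the algebraic identity
$$G_1(t,x,y)=\frac{1}{\sqrt{4\pi t}}\,e^{-(x+y)^2/4t}\big(e^{xy/t}-1\big),$$
together with the elementary inequality $0\le e^{u}-1\le u\,e^{u}$ for $u\ge 0$. Since $e^{-(x+y)^2/4t}e^{xy/t}=e^{-(x-y)^2/4t}$, this yields the pointwise bound $\tfrac1x G_1(t,x,y)\le \tfrac{y}{t\sqrt{4\pi t}}\,e^{-(x-y)^2/4t}$, in which the apparent singularity $1/x$ has been absorbed. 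This cancellation is the mechanism that makes the weighted kernels $\tilde G=\tfrac{y}{x}G$ and $\tfrac1x G$ behave well near $x=0$; the boundary values $\tilde G(t,0,y)$ and $\tilde H(t,0,y)$ are then recovered as the $x\downarrow 0$ limits of these expressions, using that $G_1$ is odd in $x$.

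For the single–kernel bounds (1), (2) and (5) I would argue by splitting into the regimes $x\ge\sqrt t$ and $x<\sqrt t$. When $x\ge\sqrt t$ one simply uses $\tfrac1x\le \tfrac{1}{\sqrt t}$ together with the standard facts $\int_0^1 G(t,x,y)\,dy\le 1$ and $\int_0^1 G(t,x,y)^2\,dy\le C_T/\sqrt t$; when $x<\sqrt t$ one inserts the refined bound above and evaluates the resulting Gaussian integrals $\int_0^1 y^k e^{-(x-y)^2/4t}\,dy$ via the substitution $y=x+\sqrt t\,\xi$ and the inequality $y^k\le C(|y-x|^k+x^k)$, the restriction $x<\sqrt t$ ensuring that the $x^k$ contribution is of the correct order. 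The estimate for $\tilde H$ is analogous, the extra factor $y$ and the derivative $\partial_y G_1$ producing one more power of $t^{-1/2}$ that is again compensated by the $1/x$ cancellation. In each case the outcome is the claimed $C_T/\sqrt t$.

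The increment estimates (3) and (4) are the heart of the proposition, and I would obtain them by interpolating two competing bounds for the quantity $I(s):=\int_0^1\big(\tilde G(s,x,z)-\tilde G(s,y,z)\big)^2\,dz$ (and its temporal analogue for (4)). On one hand, estimate (2) gives the crude bound $I(s)\le C/\sqrt s$; on the other, writing the increment as $\int_y^x \partial_w\tilde G(s,w,z)\,dw$ and applying Cauchy–Schwarz with pointwise bounds on $\partial_x\tilde G$ (for (3)) or $\partial_t\tilde G$ (for (4)) gives a ``regularity'' bound that is smaller for large $s$ but blows up as $s\downarrow 0$. Taking the minimum of the two and splitting the $s$–integral at the crossover value $s_*$ where they balance, one integrates $s^{-q/2}$ below $s_*$ and the regularity bound above $s_*$; since $q\in(1,2)$ both pieces are finite, and the location of $s_*$ (a power of $|x-y|$ for (3), of $|t-s|$ for (4)) produces exactly the exponents $(2-q)/3$ and $(2-q)/2$.

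The main obstacle will be establishing these regularity bounds uniformly up to the boundary $x=0$. Because $\tilde G$ carries the weight $1/x$ while $G$ vanishes only linearly there, the derivative $\partial_x\tilde G$ fails to satisfy the clean interior scaling $\int_0^1(\partial_x\tilde G)^2\,dz\lesssim s^{-3/2}$ uniformly in $x$, the worst behaviour occurring for $x$ comparable to $\sqrt s$; it is precisely this degradation near the interface that pushes the spatial exponent below the interior value $(2-q)$ down to $(2-q)/3$, whereas the purely temporal estimate (4) retains the more standard $(2-q)/2$. Carefully tracking the relevant Gaussian integrals across the regimes $x<\sqrt s$, $x\ge\sqrt s$ and $|x-y|<\sqrt s$, $|x-y|\ge\sqrt s$ is where the bulk of the technical work (deferred to the appendix) lies, and is the step I expect to be the most delicate.
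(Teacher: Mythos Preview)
Your approach to (1), (2) and (5) is correct and close in spirit to the paper's: both exploit the cancellation in $G_1$ coming from its oddness in $x$, though the paper works with the factor $1-e^{-xy/t}$ and the sharper inequality $1-e^{-u}\le\min(u,\sqrt u)$, which lets it avoid the $x\gtrless\sqrt t$ split you propose. One small gap: for (2) in your regime $x\ge\sqrt t$, the bound $1/x\le 1/\sqrt t$ together with $\int G^2\,dy\le C/\sqrt t$ does not by itself control the extra factor $y$ in $\tilde G=(y/x)G$; you still need a further split such as $y\lessgtr 2x$ (the paper does exactly this for (5)).

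For (3) and (4) your interpolation strategy is genuinely different from the paper, which follows Zheng: for (3) it evaluates the inner Gaussian integral explicitly as a function $R(s,x,h)$, derives two competing upper bounds for it, and then splits according to $x\le h^{(4q-2)/(3q)}$ versus $x>h^{(4q-2)/(3q)}$; for (4) it scales out $|t-s|^{(2-q)/2}$ by substitution and bounds the remaining integral uniformly in $x$. Your route is cleaner, and in fact stronger than you think. Because $G_1$ is \emph{odd} in $x$, the quotient $G_1(s,x,z)/x$ extends to a smooth \emph{even} function of $x$; hence $\tilde G_1(s,x,z)=z\cdot G_1(s,x,z)/x$ is smooth in $x$ down to $x=0$, and a scaling argument gives
\[
\int_0^1\bigl(\partial_x\tilde G_1(s,x,z)\bigr)^2\,dz\;\le\;C\,s^{-3/2}\qquad\text{uniformly in }x\in[0,1].
\]
There is therefore no boundary degradation in your scheme: splitting the $s$-integral at $s_*=h^2$ yields $|x-y|^{2-q}$, which for $|x-y|\le 1$ already implies the weaker bound $|x-y|^{(2-q)/3}$ stated in the proposition. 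The exponent $(2-q)/3$ in the paper is an artefact of the intermediate estimates inherited from Zheng, not an intrinsic feature of $\tilde G$; so the step you flag as ``the most delicate'' is, for your method, actually simpler than the paper's case analysis.
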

\begin{proof}
The proof of inequality (2) can be found in the proof of Lemma 3.2.1 in \cite{Zheng}. The other inequalities can be shown by adapting some of the arguments from \cite{Zheng}, and their proofs are deferred to the appendix.
\end{proof}
\subsection{Mild form Estimates and Continuity}
We would now like to translate the estimates on the heat kernel into estimates on the terms (\ref{A1}), (\ref{A2}) and (\ref{A3}) appearing in the mild form for the problem. We would also like to ensure continuity of these terms. In the case of the drift and moving boundary terms in the mild form, this is essentially an immediate consequence of the heat kernel estimates. For the stochastic term, more work is required, as we need to control the $L^p$-norm of the supremum of the term over the space-time interval $[0,T] \times [0,1]$. We can achieve this by a standard application of the Garsia-Rodemich-Rumsey Lemma. 

\begin{prop}\label{drift1}
Let $p>2$ and suppose that $f \in L^p(\Omega; L^{\infty}([0,T] \times [0,1]))$. Define $F(t,x)$ such that, for $t \in [0,T]$ and $x \in [0,1]$, 
\begin{equation*}
F(t,x):=\int_0^t \int_0^1 G(t-s,x,y)f(s,y) \textrm{d}y \textrm{d}y.
\end{equation*}
Then we have that  
\begin{enumerate}
\item $F \in C([0,T];\mathscr{H})$ almost surely.
\item For every $t \in [0,T]$, 
\begin{equation*}
\mathbb{E}\left[ \|F\|_{\mathscr{H},t}^p \right] \leq C_{p,T} \int_0^t \mathbb{E}\left[\|f\|_{\infty,s}^p \right] \textrm{d}s.
\end{equation*}
\end{enumerate}
\end{prop}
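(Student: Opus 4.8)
The plan is to prove both claims by combining the heat kernel estimate (2) from Proposition~\ref{Heat Kernel} with the factorisation characterisation of $\mathscr{H}$. The key observation is that membership in $\mathscr{H}$ and the $\mathscr{H}$-norm are controlled by the weighted kernel $\tilde G(t,x,y) = \tfrac{y}{x}G(t,x,y)$, since for the mild solution $F$ we have
\begin{equation*}
\frac{F(t,x)}{x} = \int_0^t \int_0^1 \frac{1}{x}G(t-s,x,y) f(s,y) \, \textrm{d}y\,\textrm{d}s = \int_0^t \int_0^1 \tilde G(t-s,x,y)\, \frac{f(s,y)}{y} \, \textrm{d}y\,\textrm{d}s,
\end{equation*}
so that controlling $\|F\|_{\mathscr{H},t} = \sup_{x \in (0,1]} |F(t,x)/x|$ reduces to integrating $\tilde G$ against the bounded integrand $f(s,y)/y$. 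Note here that estimate (1) of Proposition~\ref{Heat Kernel}, which bounds $\sup_x \int_0^1 |\tfrac1x G(t,x,y)|\,\textrm{d}y$, is precisely what is needed, rather than estimate (2), since for the drift term we work with an $L^\infty$ bound on $f$ and integrate the kernel in $L^1$.

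For the moment bound (claim 2), I would first fix $t$ and estimate $\|F\|_{\mathscr{H},t}$ pathwise. Using the above identity together with estimate (1), for each $x \in (0,1]$ and each $t' \le t$,
\begin{equation*}
\left| \frac{F(t',x)}{x} \right| \leq \int_0^{t'} \left( \sup_{x \in (0,1]} \int_0^1 \left| \frac{1}{x}G(t'-s,x,y) \right| \textrm{d}y \right) \|f\|_{\infty,s} \, \textrm{d}s \leq C_T \int_0^{t'} \frac{\|f\|_{\infty,s}}{\sqrt{t'-s}} \, \textrm{d}s.
\end{equation*}
Taking the supremum over $x$ and over $t' \le t$ gives a bound on $\|F\|_{\mathscr{H},t}$ by a time convolution of $\|f\|_{\infty,\cdot}$ against the integrable singularity $s^{-1/2}$. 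I would then raise to the $p$-th power, apply Jensen's (or Hölder's) inequality with respect to the finite measure $(t'-s)^{-1/2}\,\textrm{d}s$ on $[0,t']$ — whose total mass is $O(\sqrt{t'}) \le O(\sqrt T)$ — to move the $p$-th power inside the integral, and finally take expectations and use Tonelli to obtain
\begin{equation*}
\mathbb{E}\left[ \|F\|_{\mathscr{H},t}^p \right] \leq C_{p,T} \int_0^t \mathbb{E}\left[ \|f\|_{\infty,s}^p \right] \textrm{d}s,
\end{equation*}
which is the desired estimate.

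For the continuity claim (claim 1), I would establish two things: that $F(t,\cdot) \in \mathscr{H}$ for each fixed $t$, and that $t \mapsto F(t,\cdot)$ is continuous into $\mathscr{H}$. For fixed $t$, the factorisation $F(t,x) = x \cdot g(t,x)$ with $g(t,x) = \int_0^t\int_0^1 \tilde G(t-s,x,y)\tfrac{f(s,y)}{y}\,\textrm{d}y\,\textrm{d}s$ exhibits $F(t,\cdot)$ in the form required by the characterisation of $\mathscr{H}$ in the Remark following its definition; the continuity of $g$ up to $x=0$ (using the extended definition $\tilde G(t,0,y) = y\,\partial_x G(t,0,y)$) and the vanishing $g(t,1)=0$ coming from the Dirichlet boundary condition give $F(t,\cdot)\in\mathscr{H}$. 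For joint continuity in time, I would use estimates (3) and (4) of Proposition~\ref{Heat Kernel}, which provide Hölder-type control of the spatial and temporal increments of $\tilde G$ in the relevant mixed norm, to show that $\|F(t,\cdot)-F(t',\cdot)\|_{\mathscr{H}} \to 0$ as $t' \to t$. The main obstacle is the behaviour at the boundary point $x=0$: one must verify that the ratio $F(t,x)/x$ extends continuously there and that this continuity is uniform enough to survive passing to the supremum defining the $\mathscr{H}$-norm. This is exactly where the special definition of $\tilde G$ at $x=0$ via the derivative $y\,\partial_x G(t,0,y)$ does the work, and the dominated convergence argument controlling $g$ near $x=0$ is the delicate step; the rest of the continuity follows from routine convolution estimates. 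I note that claim~1 may also be obtained more cheaply for smooth approximating $f$ and then extended by the moment bound of claim~2, mirroring the density argument used at the end of the proof of Theorem~\ref{Obstacle H}.
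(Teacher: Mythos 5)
Your proposal follows essentially the same route as the paper: write $F(t,x)/x$ in terms of the kernel $\frac{1}{x}G$, invoke estimate (1) of Proposition \ref{Heat Kernel} to control $\int_0^1 |\frac1x G(t-s,x,y)|\,\textrm{d}y$ by $C_T(t-s)^{-1/2}$, and then use an integral inequality in the time variable to move the $p$-th power inside; part (1) is disposed of in the paper simply by noting that $F$ is the solution of the heat equation with zero initial and Dirichlet boundary data and source $f$, so your more elaborate boundary analysis, while not wrong, is more than is needed here.

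One caveat: of the two alternatives you offer for the time integral, only the H\"older split delivers the stated bound. Applying Jensen with respect to the finite measure $(t'-s)^{-1/2}\,\textrm{d}s$ leaves the singular weight inside, giving $C_{p,T}\int_0^{t'}\|f\|_{\infty,s}^p (t'-s)^{-1/2}\,\textrm{d}s$, and this does not reduce to the unweighted bound $C_{p,T}\int_0^t \mathbb{E}[\|f\|_{\infty,s}^p]\,\textrm{d}s$ with a constant uniform in $t\in[0,T]$ (test with $\|f\|_{\infty,s}\equiv 1$: the left side is of order $\sqrt{t'}$, the right of order $t$). The paper's move is the H\"older inequality with exponents $q=p/(p-1)$ and $p$ in the $r$ variable, placing the entire kernel factor $\int_0^1 \frac1x G(s-r,x,y)\,\textrm{d}y$ into $L^q$; since $p>2$ gives $q<2$, the quantity $\int_0^s (s-r)^{-q/2}\,\textrm{d}r$ is bounded by a constant $C_{T,p}$, and the clean bound $\|F\|_{\mathscr{H},t}^p \leq C_{T,p}\int_0^t \|f\|_{\infty,r}^p\,\textrm{d}r$ follows before taking expectations. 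With that choice your argument matches the paper's.
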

\begin{proof}
$F \in C([0,T]; \mathscr{H})$ is clear, since $F$ is simply the solution to the heat equation with Dirichlet boundary conditions at $x=0,1$, initial data $0$ and source term $f$. For the second part, we note that for $0 \leq s \leq t \leq T$ and $x \in (0,1]$,
\begin{equation}\label{eqn_new}\begin{split}
\left| \int_0^s \int_0^1 \frac{1}{x}G(s-r,x,y)f(r,y) \; \textrm{d}y \textrm{d}r \right|^p \leq & \left| \int_0^s \left( \int_0^1 \frac{1}{x}G(s-r,x,y)\textrm{d}y \right) \|f\|_{\infty,r} \; \textrm{d}r \right|^p 
\\ \leq &  \left[\int_0^s \left(\int_0^1 \frac{1}{x}G(s-r,x,y)\textrm{d}y \right)^q \textrm{d}r \right]^{p/q} \times \int_0^s \|f\|_{\infty,r}^p \; \textrm{d}r,
\end{split}
\end{equation}
where $q=p/(p-1)$. By estimate (1) in Proposition \ref{Heat Kernel}, we have that 
\begin{equation*}
\left[\int_0^s \left(\int_0^1 \frac{1}{x}G(s-r,x,y)\textrm{d}y \right)^q \textrm{d}r \right]^{p/q} \leq C_{T,p} \left[ \int_0^s \left(\frac{1}{\sqrt{s-r}} \right)^q \textrm{d}r \right]^{p/q} \leq C_{T,p}.
\end{equation*}
Taking the supremum over $s \in [0,t]$ and $x \in (0,1]$ in the inequality (\ref{eqn_new}) then gives that 
\begin{equation*}
\|F \|_{\mathscr{H},t}^p \leq C_{T,p}\int_0^t \|f \|_{\infty, r}^p \; \textrm{d}r.
\end{equation*}
We can then take expectations of both sides to conclude the argument.
\end{proof}
\begin{prop}\label{drift2}
Let $p>2$ and suppose that $f \in L^p(\Omega; C([0,T];\mathscr{H}))$. Define $J(t,x)$ so that 
\begin{equation*}
J(t,x):= \int_0^t \int_0^1 \frac{\partial G}{\partial y}(t-s,x,y)f(s,y) \textrm{d}y \textrm{d}s.
\end{equation*}
Then we have that
\begin{enumerate}
\item $J \in C([0,T];\mathscr{H})$ almost surely.
\item For  $t \in [0,T]$,
\begin{equation*}
\mathbb{E}\left[ \|J\|_{\mathscr{H},t}^p \right] \leq C_{T,p} \int_0^t \mathbb{E}\left[ \|f\|_{\mathscr{H},s}^p \right] \textrm{d}s.
\end{equation*}
\end{enumerate}
\end{prop}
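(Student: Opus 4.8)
The plan is to treat the two assertions separately, establishing the pathwise $L^p$-estimate (2) first, since the bound it produces is linear in $f$ and will also drive the continuity claim (1). The key observation for the estimate is that the $y$-derivative of the kernel, after division by $x$ and use of the characterisation of $\mathscr{H}$, is exactly the object $\tilde H$ controlled in estimate (5) of Proposition \ref{Heat Kernel}. Writing $f(s,y) = y \cdot \bigl(f(s,y)/y\bigr)$ with $\left| f(s,y)/y \right| \le \|f(s,\cdot)\|_{\mathscr{H}}$, I would rewrite
\[
\frac{J(t,x)}{x} = \int_0^t \int_0^1 \frac{y}{x}\frac{\partial G}{\partial y}(t-s,x,y)\,\frac{f(s,y)}{y}\,\textrm{d}y\,\textrm{d}s = \int_0^t \int_0^1 \tilde{H}(t-s,x,y)\,\frac{f(s,y)}{y}\,\textrm{d}y\,\textrm{d}s,
\]
so that $\left| J(t,x)/x \right| \le \int_0^t \bigl( \int_0^1 |\tilde H(t-s,x,y)|\,\textrm{d}y \bigr)\,\|f\|_{\mathscr{H},s}\,\textrm{d}s$. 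Then, exactly as in the proof of Proposition \ref{drift1}, I would apply H\"older's inequality with $q = p/(p-1)$ followed by estimate (5) of Proposition \ref{Heat Kernel}, namely $\int_0^1 |\tilde H(t-s,x,y)|\,\textrm{d}y \le C_T/\sqrt{t-s}$, which is uniform in $x$. The residual time integral $\int_0^t (t-s)^{-q/2}\,\textrm{d}s$ is finite precisely because $p>2$ forces $q<2$; this is exactly where the hypothesis $p>2$ enters. Taking the supremum over $x \in (0,1]$ and over times up to $t$, and then expectations, yields the claimed bound in (2).

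For part (1), I would first record that the estimate just obtained is pathwise and, since $J$ depends linearly on $f$, gives $\|J[f_1]-J[f_2]\|_{\mathscr{H},T} \le C_{T,p}\,\|f_1-f_2\|_{\mathscr{H},T}$. I would then verify the continuity statement on a dense class of spatially smooth profiles and pass to the limit, mirroring the density argument at the end of the proof of Theorem \ref{Obstacle H}. For $f$ that is smooth in $y$ and vanishing at the endpoints, I would integrate by parts in $y$: because the Dirichlet kernel $G$ vanishes at $y=0$ and $y=1$, the boundary terms drop and $J[f] = -\int_0^t\int_0^1 G(t-s,x,y)\,\partial_y f(s,y)\,\textrm{d}y\,\textrm{d}s$, which is precisely the object $F$ of Proposition \ref{drift1} with source $-\partial_y f \in L^\infty$; hence $J[f] \in C([0,T];\mathscr{H})$. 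Approximating a general $f \in C([0,T];\mathscr{H})$ by such smooth $f_n$ (writing $f = yg$ with $g$ continuous and $g(\cdot,1)=0$, smoothing $g$ in $y$ while correcting so that the approximant still vanishes at $y=1$), the linear estimate shows $J[f_n] \to J[f]$ in $C([0,T];\mathscr{H})$, and completeness of $C([0,T];\mathscr{H})$ gives $J[f] \in C([0,T];\mathscr{H})$ almost surely.

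The main obstacle I anticipate is the continuity claim (1) rather than the estimate (2): the factor $\partial G/\partial y$ is more singular than the bare kernel, so one cannot simply invoke ``$J$ solves a heat equation'' as was done for $F$ in Proposition \ref{drift1}. The integration-by-parts-plus-density route is the cleanest way around this, and it relies essentially on the Dirichlet boundary behaviour of $G$ so that no boundary contributions survive. The only genuine care required is in the choice of smooth approximants, which must be kept inside $\mathscr{H}$ (so still vanishing at $y=1$ after smoothing) and must converge in the $\mathscr{H}$-norm uniformly in time; everything else reduces to the estimate (2) and to Proposition \ref{drift1}.
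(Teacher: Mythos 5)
Your proposal is correct, and the estimate in part (2) is essentially the paper's argument: both rewrite $J(t,x)/x$ as an integral against $\tilde H(t-s,x,y)\,f(s,y)/y$, apply H\"older with $q=p/(p-1)<2$, and invoke the $C_T/\sqrt{t-s}$ bound of estimate (5) in Proposition \ref{Heat Kernel} (the paper re-derives that kernel bound inline rather than citing it, but the content is identical). Where you genuinely diverge is part (1). The paper proves continuity directly: it cites an earlier result for continuity of $J/x$ on $[0,T]\times(0,1]$ and then extends to $x=0$ by hand, splitting the $y$-integral into $\{y<2x_n\}$ and $\{y\ge 2x_n\}$ and running a dominated convergence argument with explicit Gaussian dominating functions to identify the limit as the integral against $y\,\partial^2 G/\partial y\,\partial x(\cdot,0,\cdot)$. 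You instead integrate by parts in $y$ for smooth profiles (using that both $G$ and elements of $\mathscr{H}$ vanish at $y=0,1$, so no boundary terms survive), reduce to Proposition \ref{drift1} with source $-\partial_y f$, and transfer to general $f$ by density using the pathwise linear bound from part (2) together with completeness of $C([0,T];\mathscr{H})$. Both routes work. Yours is cleaner and avoids the delicate pointwise analysis at $x=0$, at the cost of needing the approximation step to be done carefully: the smoothed $g_n$ must be corrected to vanish at $y=1$ so the approximant stays in $\mathscr{H}$ and the boundary terms in the integration by parts still drop, which you have anticipated; the paper's route is more self-contained in that it never leaves the original integral representation and produces the explicit boundary value $\int_0^t\int_0^1 \frac{\partial^2 G}{\partial y\,\partial x}(t-s,0,y)f(s,y)\,\mathrm{d}y\,\mathrm{d}s$ of $J(t,\cdot)'(0)$, which is occasionally useful elsewhere.
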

\begin{proof}
We have that, for $x \in (0,1]$ and $t \in [0,T]$,
\begin{equation*}\begin{split}
\frac{J(t,x)}{x}:= &  \int_0^t \int_0^1 \frac{y}{x}\frac{\partial G}{\partial y}(t-s,x,y)\frac{f(s,y)}{y} \textrm{d}y \textrm{d}s 
\\ = & \int_0^t \int_0^1 \frac{y}{x}\frac{\partial G}{\partial y}(s,x,y)\frac{f(t-s,y)}{y} \textrm{d}y \textrm{d}s. 
\end{split}
\end{equation*}
Note that, by the estimates in Proposition A.6 in \cite{Paper}, we know that $\frac{J(t,x)}{x}$ is well defined by our integral expression and continuous on $[0,T] \times (0,1]$. So it is left to show that $\frac{J(t,x)}{x}$ can be extended to a continuous function on $[0,T] \times [0,1]$. In the following arguments, we focus on the $G_1$ component of $G$ (see Remark \ref{G}). Suppose that $(t_n,x_n)$ is a sequence in $[0,T] \times (0,1]$ converging to $(t,0)$. Note that
\begin{equation*}\begin{split}
\int_0^{t_n} \int_0^1 & \mathbbm{1}_{\left\{y < 2x_n \right\}} \frac{y}{x_n} \left| \frac{\partial G_1}{\partial y}(s,x_n,y)\frac{f(t_n-s,y)}{y} \right| \textrm{d}y \textrm{d}s 
\\ \leq & \|f\|_{\mathscr{H},T}\int_0^{t_n} \int_0^1 \mathbbm{1}_{\left\{y < 2x_n \right\}} \frac{y}{x_n} \left| \frac{\partial G_1}{\partial y}(s,x_n,y) \right| \textrm{d}y \textrm{d}s  
\\ \leq &  2\|f\|_{\mathscr{H},T}\int_0^{t_n} \int_0^1 \mathbbm{1}_{\left\{y < 2x_n \right\}} \left| \frac{\partial G_1}{\partial y}(s,x_n,y) \right| \textrm{d}y \textrm{d}s.
\end{split}
\end{equation*}
For $p >3$, we have that this is at most
\begin{equation}\label{bound1}
2\|f\|_{\mathscr{H},T} \left(\int_0^{t_n} \int_0^1 \mathbbm{1}_{\left\{y < 2x_n \right\}} \textrm{d}y \textrm{d}s \right)^{1/p} \times \left( \int_0^{t_n} \int_0^1 \left|\frac{\partial G_1}{\partial y}(s,x_n,y) \right|^q \textrm{d}y \textrm{d}s \right)^{1/q},
\end{equation}
where $q = p/(p-1)$. Note that $q \in (1,3/2)$, and so
\begin{equation*}
\sup\limits_{t \in [0,T]} \sup\limits_{x \in [0,1]}\int_0^{t} \int_0^1 \left|\frac{\partial G_1}{\partial y}(s,x,y) \right|^q \textrm{d}y \textrm{d}s < \infty.
\end{equation*}
We therefore have that (\ref{bound1}) converges to zero as $n \rightarrow \infty$.
Now note that 
\begin{equation*}
\mathbbm{1}_{\left\{y \geq 2x_n, s \in [0,t_n]\right\}}\frac{y}{x_n} \frac{\partial G_1}{\partial y}(s,x_n,y)\frac{f(t_n-s,y)}{y} \rightarrow \mathbbm{1}_{\left\{s \in [0,t]\right\}} y\frac{\partial^2 G_1}{\partial y \partial x}(s,0,y)\frac{f(t-s,y)}{y}
\end{equation*}
almost everywhere in $[0,T] \times [0,1]$. In addition we have that for $s \in [0,T]$, $y \in [0,1]$ and $n \geq 1$,
\begin{equation}\label{calc1}\begin{split}
& \left| \frac{y}{x_n} \frac{\partial G_1}{\partial y}(s,x_n,y) \right| 
\\ & =  C\left| \frac{y}{x_ns \sqrt{s}} \left( (x_n-y)e^{-(x_n-y)^2/4s}+ (x_n+y)e^{-(x_n+y)^2/4s} \right) \right| 
\\ & \leq C\left|\frac{y}{s \sqrt{s}} \left(e^{-(x_n-y)^2/4s}+e^{-(x_n+y)^2/4s} \right) \right| + C\left|\frac{y^2}{x_n s \sqrt{s}} \left( e^{-(x_n-y)^2/4s} - e^{-(x_n+y)/4s} \right)  \right| 
\\ &  \leq C \left|\frac{y}{s \sqrt{s}} \left(e^{-(x_n-y)^2/4s}+e^{-(x_n+y)^2/4s} \right) \right| + C\left|\frac{y^2}{x_n s \sqrt{s}} e^{-(x_n-y)^2/4s}\left( 1 - e^{-x_ny/s} \right)  \right|.
\end{split}
\end{equation}
On the set $\{y \geq 2x_n \}$, we have that 
\begin{equation*}
e^{-(x_n-y)^2/4s} \leq e^{- y^2/16s}.
\end{equation*} 
Note also that, for $x \geq 0$, $1-e^{-x} \leq x$. Therefore, we can bound (\ref{calc1}) on the set $\{y \geq 2x_n\}$ by 
\begin{equation*}\begin{split}
\left|\frac{y}{s \sqrt{s}} \left( e^{-y^2/16s} + e^{-y^2/4s} \right) \right| + \left| \frac{y^2}{x_ns \sqrt{s}} e^{-y^2/16s} \times \frac{x_ny}{s} \right| \leq & C \left( \frac{y}{s \sqrt{s}} e^{-y^2/16s} + \frac{y^3}{s^2 \sqrt{s}}e^{-y^2/16s} \right).
\end{split}
\end{equation*}
We can similarly bound the other components of $G$, to obtain that, for every $n$, on the set $\{y \geq 2x_n \}$,
\begin{equation}\label{dom1}
 \left| \frac{y}{x_n} \frac{\partial G}{\partial y}(s,x_n,y) \right| \leq C \left( \frac{y}{s \sqrt{s}} e^{-y^2/16s} + \frac{y^3}{s^2 \sqrt{s}}e^{-y^2/16s} \right).
\end{equation}
It follows that, for every $n$,
\begin{equation*}
\left| \mathbbm{1}_{\left\{y \geq 2x_n, s \in [0,t_n]\right\}}\frac{y}{x_n} \frac{\partial G}{\partial y}(s,x_n,y)\frac{f(t_n-s,y)}{y} \right| \leq C \left( \frac{y}{s \sqrt{s}} e^{-y^2/16s} + \frac{y^3}{s^2 \sqrt{s}}e^{-y^2/16s} \right) \|f\|_{\mathscr{H},T}
\end{equation*} 
This function is integrable, and so we can apply the DCT to obtain that 
\begin{equation*}\begin{split}
\int_0^{t_n} & \int_0^1  \mathbbm{1}_{\left\{y \geq 2x_n \right\}} \frac{y}{x_n}  \frac{\partial G}{\partial y}(t_n-s,x_n,y)\frac{f(s,y)}{y} \textrm{d}y \textrm{d}s \\ = &  \int_0^{t_n} \int_0^1 \mathbbm{1}_{\left\{y \geq 2x_n \right\}} \frac{y}{x_n}  \frac{\partial G}{\partial y}(s,x_n,y)\frac{f(t_n-s,y)}{y} \textrm{d}y \textrm{d}s \rightarrow \int_0^t \int_0^1 \frac{\partial^2 G}{\partial y \partial x}(s,0,y)f(t-s,y) \textrm{d}y \textrm{d}s 
\\ = & \int_0^t \int_0^1 \frac{\partial^2 G}{\partial y \partial x}(t-s,0,y)f(s,y) \textrm{d}y \textrm{d}s.
\end{split}
\end{equation*}
Another application of the DCT (note that we can use the same dominating function as in (\ref{dom1})) gives that, if $t_n$ is a sequence in $[0,T]$ and $t_n \rightarrow t$, then 
\begin{equation*}
\int_0^{t_n} \int_0^1 \frac{\partial^2 G}{\partial y \partial x}(t_n-s,0,y)f(s,y) \textrm{d}y \textrm{d}s \rightarrow \int_0^t \int_0^1 \frac{\partial^2 G}{\partial y \partial x}(t-s,0,y)f(s,y) \textrm{d}y \textrm{d}s.
\end{equation*}
So we have shown that $J \in C([0,T]; \mathscr{H})$ almost surely. We now prove the bound (2). Let $0 \leq s \leq t \leq T$. For $p>2$, we have that 
\begin{equation*}
\left| \frac{J(s,x)}{x} \right| \leq \left(\int_0^s \left[ \int_0^1 \left| \frac{y}{x} \frac{\partial G}{\partial y}(r,x,y) \right| \textrm{d}y \right]^q \textrm{d}r \right)^{1/q} \times \left( \int_0^s \|f \|_{\mathscr{H},r}^p \textrm{d}r \right)^{1/p}.
\end{equation*}
Arguing as before, considering the cases $y<2x$ and $y \geq 2x$ separately, we have that
\begin{equation*}\label{12}
\left| \frac{y}{x} \frac{\partial G}{\partial y}(r,x,y) \right| \leq C \left( \left| \frac{\partial G}{\partial y}(r,x,y) \right| +  \frac{y}{r \sqrt{r}} e^{-y^2/16r} + \frac{y^3}{r^2 \sqrt{r}}e^{-y^2/16r} \right).
\end{equation*}
For $p>2$ and corresponding $q=p/(p-1) \in (1,2)$, and we have that
\begin{equation*}\label{11}
\begin{split}
\int_0^s &  \left[ \int_0^1 \left( \left| \frac{\partial G}{\partial y}(r,x,y) \right| +  \frac{y}{r \sqrt{r}} e^{-y^2/16r} + \frac{y^3}{r^2 \sqrt{r}}e^{-y^2/16r} \right) \textrm{d}y \right]^q \textrm{d}r 
\\ & \leq C \int_0^s \left(\frac{1}{\sqrt{r}}\right)^q \textrm{d}r \leq C_{T,p}< \infty.
\end{split}
\end{equation*}
It follows that, for $p>2$, 
\begin{equation*}
\left| \frac{J(s,x)}{x} \right| \leq C_{T,p} \left( \int_0^s \|f \|_{\mathscr{H},r}^p \textrm{d}r \right)^{1/p}.
\end{equation*}
Therefore, 
\begin{equation*}
\sup\limits_{s \in [0,t]} \sup\limits_{x \in (0,1]} \left| \frac{J(s,x)}{x} \right|^p \leq C_{T,p}  \int_0^s \|f \|_{\mathscr{H},r}^p \textrm{d}r.
\end{equation*}
The result then follows by taking expectations.
\end{proof}
\begin{prop}\label{vol1}
Suppose that $p> 22$ and let $f \in L^p(\Omega; L^{\infty}([0,T];\mathscr{H}))$. Define $K(t,x)$ such that, for $t \in [0,T]$ and $x \in [0,1]$, 
\begin{equation*}
K(t,x):= \int_0^t \int_0^1 G(t-s,x,y)f(s,y) W(\textrm{d}y, \textrm{d}s).
\end{equation*}
Then we have that 
\begin{enumerate}
\item $K \in C([0,T];\mathscr{H})$ almost surely.
\item For $t \in [0,T]$, 
\begin{equation*}
\mathbb{E}\left[ \|K\|_{\mathscr{H},t}^p \right] \leq C_{T,p} \int_0^t \mathbb{E}\left[ \|f\|_{\mathscr{H},s}^p \right] \textrm{d}s.
\end{equation*}
\end{enumerate}
\end{prop}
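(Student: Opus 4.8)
The plan is to reduce everything to an $L^p$ bound on the supremum of the normalised process $V(t,x):=K(t,x)/x$, and to extract this bound together with the continuity statement from a single application of the Garsia--Rodemich--Rumsey (GRR) lemma, fed by moment estimates on the increments of $V$. Writing $g(s,y):=f(s,y)/y$, so that $|g(s,y)|\le \|f(s,\cdot)\|_{\mathscr H}\le \|f\|_{\mathscr H,s}$ since $f(s,\cdot)\in\mathscr H$, we have for $x\in(0,1]$
\begin{equation*}
V(t,x)=\int_0^t\int_0^1 \tilde G(t-s,x,y)\,g(s,y)\,W(\textrm{d}y,\textrm{d}s),
\end{equation*}
and we use the convention $\tilde G(s,0,y)=y\,\tfrac{\partial G}{\partial x}(s,0,y)$ to define $V(t,0)$ (whose finiteness follows from estimate (2) of Proposition \ref{Heat Kernel} at $x=0$). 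Since $K(t,0)=0$ because $G$ is the Dirichlet kernel, and $K(t,1)=0$, once we show $V$ has a jointly continuous modification on $[0,T]\times[0,1]$ we obtain $K(t,x)=xV(t,x)$ with $V$ continuous and $V(t,1)=0$; by the characterisation of $\mathscr H$ in the Remark following its definition this gives $K\in C([0,T];\mathscr H)$, which is part (1).

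Next I would establish two increment estimates. For the spatial increment at fixed time, the $L^p$ Burkholder--Davis--Gundy inequality for integrals against the martingale measure $W$ gives
\begin{equation*}
\mathbb E\big[|V(t,x)-V(t,x')|^p\big]\le C_p\,\mathbb E\Big[\Big(\int_0^t\!\int_0^1 \big(\tilde G(t-s,x,y)-\tilde G(t-s,x',y)\big)^2 g(s,y)^2\,\textrm{d}y\,\textrm{d}s\Big)^{p/2}\Big].
\end{equation*}
Applying Minkowski's integral inequality in $L^{p/2}(\Omega)$ to pass the expectation inside, bounding $\mathbb E[|g(s,y)|^p]^{2/p}\le \mathbb E[\|f\|_{\mathscr H,s}^p]^{2/p}$, and then applying H\"older in $s$ with the conjugate pair $(q,q')=\big(\tfrac{p}{p-2},\tfrac{p}{2}\big)$, reduces the kernel factor to $\big(\int_0^t \|\tilde G(t-s,x,\cdot)-\tilde G(t-s,x',\cdot)\|_{L^2}^{2q}\,\textrm{d}s\big)^{1/q}$, which is exactly what estimate (3) of Proposition \ref{Heat Kernel} controls (note $q=\tfrac{p}{p-2}\in(1,2)$ for $p>4$, as that estimate requires). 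The choice $q'=p/2$ is made precisely so that the $f$-dependence collapses to $\int_0^t \mathbb E[\|f\|_{\mathscr H,s}^p]\,\textrm{d}s$, yielding a bound of the form $C_{T,p}\,|x-x'|^{(p-4)/6}\int_0^t \mathbb E[\|f\|_{\mathscr H,s}^p]\,\textrm{d}s$. The temporal increment is handled by splitting $V(t,x)-V(s,x)$ into the change in the upper limit $\int_s^t\!\int_0^1\tilde G(t-r,x,y)g\,W(\textrm{d}y,\textrm{d}r)$, estimated with part (2), and the change in the kernel $\int_0^s\!\int_0^1(\tilde G(t-r,x,y)-\tilde G(s-r,x,y))g\,W(\textrm{d}y,\textrm{d}r)$, estimated with part (4); the identical Minkowski/H\"older scheme gives $C_{T,p}\,|t-s|^{(p-4)/4}\int_0^t \mathbb E[\|f\|_{\mathscr H,s}^p]\,\textrm{d}s$.

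Finally I would combine these into a single modulus on $D_t:=[0,t]\times[0,1]\subset\mathbb R^2$: taking the smaller (spatial) exponent, for $u,w\in D_t$,
\begin{equation*}
\mathbb E\big[|V(u)-V(w)|^p\big]\le C_{T,p}\,|u-w|^{(p-4)/6}\int_0^t \mathbb E\big[\|f\|_{\mathscr H,s}^p\big]\,\textrm{d}s.
\end{equation*}
The GRR lemma with $\Psi(u)=u^p$ and $\psi(u)=u^{\gamma}$ then bounds $\sup_{D_t}|V|$ by $C\,B^{1/p}$ plus a base-point term, where $B=\int_{D_t}\int_{D_t}|V(u)-V(w)|^p|u-w|^{-\gamma p}\,\textrm{d}u\,\textrm{d}w$; the base-point term vanishes because $V(0,\cdot)\equiv 0$. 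Taking expectations, $\mathbb E[B]<\infty$ needs a $\gamma$ with $\gamma>4/p$ (so the GRR H\"older exponent is positive) and $\gamma p<(p-4)/6+2$ (so the double integral over the two-dimensional domain converges); such a $\gamma$ exists once $p$ is large enough, and this is the r\^ole of the hypothesis $p>22$. This produces
\begin{equation*}
\mathbb E\big[\|K\|_{\mathscr H,t}^p\big]=\mathbb E\Big[\sup_{D_t}|V|^p\Big]\le C_{T,p}\int_0^t \mathbb E\big[\|f\|_{\mathscr H,s}^p\big]\,\textrm{d}s,
\end{equation*}
which is part (2), while the very same increment bounds — valid down to $x=0$ since estimate (3) holds for all $x\in[0,1]$ through the extension of $\tilde G$ — supply the continuous modification of $V$ needed for part (1), exactly as the dominated-convergence argument does for the deterministic term in Proposition \ref{drift2}. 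The main obstacle is this uniform $L^p$ control of the modulus of continuity near the boundary $x=0$: everything hinges on the singular weighted kernel estimates (3) and (4) surviving the $y/x$ factor as $x\downarrow 0$, and on arranging the H\"older/Minkowski exponents so that simultaneously the $f$-dependence reduces to $\int_0^t\mathbb E[\|f\|_{\mathscr H,s}^p]\,\textrm{d}s$ (as required for the later Picard iteration) and the spatial Hölder exponent is large enough for the two-dimensional GRR integral to converge, which is what forces $p$ to be taken large.
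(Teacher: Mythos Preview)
Your proof is correct and follows essentially the same approach as the paper: define the normalised process $L=K/x$ via $\tilde G$, apply BDG plus H\"older with exponent $q=p/(p-2)$ against the heat-kernel estimates (2)--(4) of Proposition~\ref{Heat Kernel}, and feed the resulting increment bounds into GRR to obtain both the continuous modification and the $L^p$ bound on the supremum. The only cosmetic differences are that the paper bounds $|g(s,y)|\le\|f\|_{\mathscr H,s}$ pointwise inside the expectation before H\"older (rather than using Minkowski first), and invokes Dalang's anisotropic GRR (Corollary~A.3) in the parabolic metric $|t-s|^{1/2}+|x-y|$ rather than the Euclidean version you describe---which is in fact where the threshold $p>22$ arises, your Euclidean formulation actually needing only $(p-4)/6>2$, i.e.\ $p>16$.
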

\begin{proof}
In this case, we will need to apply the Garsia-Rodemich-Rumsey Lemma (in the form of Corollary A.3 in \cite{Dalang}) to obtain continuity of $\frac{K(t,x)}{x}$, and to suitably control the supremum of this process. Define for $t \in [0,T]$ and $x \in [0,1]$
\begin{equation*}
L(t,x):= \int_0^t \int_0^1 \tilde{G}(t-s,x,y) \frac{f(s,y)}{y} W(\textrm{d}y, \textrm{d}s).
\end{equation*}
Note that, for $t \in [0,T]$ and $x \in (0,1]$, we then have that $L(t,x)= K(t,x)/x$. For $x,z \in [0,1]$ and $0 \leq s \leq t \leq T$, we have
\begin{equation}\label{spliteqn}\begin{split}
\mathbb{E}\left[ |L(t,x)-L(s,z)|^p \right] \leq & \mathbb{E} \left[ \left|\int_s^t \int_0^1 \tilde{G}(t-r,x,y) \frac{f(r,y)}{y} W(\textrm{d}y, \textrm{d}r) \right|^p \;  \right] 
\\ & + \mathbb{E}\left[ \left|\int_0^s \int_0^1 \left[ \tilde{G}(t-r,x,y) - \tilde{G}(s-r,z,y)  \right] \frac{f(r,y)}{y} W(\textrm{d}y,\textrm{d}r) \right|^p \right].
\end{split}
\end{equation}
Applying Burkholder's inequality gives that the first term on the right hand side is at most
\begin{equation*}\begin{split}
\mathbb{E}  \left[ \left|\int_s^t \int_0^1 \left(\tilde{G}(t-r,x,y)  \frac{f(r,y)}{y}\right)^2 \textrm{d}y \textrm{d}r \right|^{p/2} \;  \right] \leq & \mathbb{E}\left[ \left|\int_s^t \left(\int_0^1 \tilde{G}(t-r,x,y)^2 \textrm{d}y \right) \|f\|_{\mathscr{H},r}^2 \textrm{d}r \right|^{p/2}\right] .
\end{split}
\end{equation*}
An application of H\"{o}lder's inequality then bounds this by 
\begin{equation*}
\left[\int_s^t \left(\int_0^1 \tilde{G}(t-r,x,y)^2 \textrm{d}y \right)^{p/(p-2)} \textrm{d}r \right]^{(p-2)/2} \times \int_s^t \mathbb{E}\left[\|f\|_{\mathscr{H},r}^p \textrm{d}r \right].
\end{equation*}
By estimate (2) in Proposition \ref{Heat Kernel}, this can be bounded by  
\begin{equation*}
C |t-s|^{(p-4)/4} \times \int_s^t \mathbb{E}\left[\|f\|_{\mathscr{H},r}^p \textrm{d}r \right].
\end{equation*}
Arguing similarly for the second term in (\ref{spliteqn}), making use of the other estimates (3) and (4) in Proposition \ref{Heat Kernel}, we see that 
\begin{equation*}
\mathbb{E}\left[ |L(t,x)-L(s,z)|^p \right] \leq C_{T,p} \left( |t-s|^{1/4} + |x-z|^{1/6} \right)^{p-4} \times \int_0^t \mathbb{E}\left[ \|f\|_{\mathscr{H},r}^p \right] \textrm{d}r.
\end{equation*}
In particular, for $x,z \in [0,1]$ and $0 \leq s \leq \tau \leq t \leq T$,
\begin{equation*}
\begin{split}
\mathbb{E}\left[ |L(\tau,x)-L(s,z)|^p \right] & \leq C_{T,p} \left( |\tau-s|^{1/2} + |x-z| \right)^{(p-4)/6} \times \int_0^t \mathbb{E}\left[ \|f\|_{\mathscr{H},r}^p \right] \textrm{d}r
\\ & = C_{T,p} \left( |\tau-s|^{1/2} + |x-z| \right)^{ \frac{p-22}{6} +  3} \times \int_0^t \mathbb{E}\left[ \|f\|_{\mathscr{H},r}^p \right] \textrm{d}r
\end{split}
\end{equation*}
Applying the Garsia-Rodemich-Rumsey Lemma in the form of Corollary A.3 from \cite{Dalang} we see that there exists a random variable $X_t \geq 0$ such that 
\begin{enumerate}
\item $\mathbb{E}\left[X_t^p \right] \leq C_{T,p} \int_0^t \mathbb{E}\left[ \|f\|_{\mathscr{H},r}^p \right] \textrm{d}r.$
\item For $s, \tau \in [0,t]$,
\begin{equation*}
|L(\tau,x)- L(s,y)| \leq X_t \left(|\tau-s|^{1/4} + |x-y|^{1/6} \right)^{(p-22)/p}
\end{equation*}
\end{enumerate}
In particular, setting $t = T$,  we see that $L$ is continuous on $[0,T] \times [0,1]$ almost surely, from which it follows that $K \in C([0,T];\mathscr{H})$ almost surely. In addition, we have that for $t \in [0,T]$, $\tau \in [0,t]$ and $x \in [0,1]$
\begin{equation*}
|L(\tau,x)|= |L(\tau,x)-L(0,x)| \leq C_{T,p} \times X_t.
\end{equation*}
It follows that 
\begin{equation*}
\mathbb{E}\left[ \|K\|_{\mathscr{H},t}^p \right] = \mathbb{E}\left[ \|L\|_{\infty,t}^p \right] \leq C_{T,p} \int_0^t \mathbb{E}\left[ \|f\|_{\mathscr{H},r}^p \right] \; \textrm{d}r.
\end{equation*}
\end{proof}

\section{Existence and Uniqueness}

We are now in position to prove existence and uniqueness for our Stefan problem. As mentioned in the introduction, we prove existence via a Picard argument for a truncated version of the problem. Existence for the main problem is then deduced by concatenation. Before doing so, the following truncation map is introduced.
\begin{defn}
For $M>0$, we define the map $F_M : \mathscr{H} \rightarrow \mathscr{H}$ such that 
\begin{equation*}
F_M(u)(x):= \hspace{2mm}
\begin{cases}
x \left[ \frac{u(x)}{x} \wedge M \right] \hspace{5mm} & x \in (0,1] 
\\ 0 & x =0
\end{cases}
\end{equation*}
\end{defn}
\begin{rem}
$F_M(u)$ is a well-defined map from $\mathscr{H}$ to $\mathscr{H}$ i.e. $F_M(u) \in \mathscr{H}$ for $u \in \mathscr{H}$. Continuity on $(0,1]$ is clear, and we have defined $F_M(u)(0)=0$. We therefore only need to argue the existence of a derivative for $F_M(u)$ at $x=0$. Note that, for $x >0$, 
\begin{equation*}
\frac{F_M(u)(x)}{x}= \left[ \frac{u(x)}{x} \wedge M \right].
\end{equation*}
This converges as $x \downarrow 0$, since $\frac{u(x)}{x}$ converges.
\end{rem}

\begin{thm}\label{truncate theorem}
Fix some $M>0$. Then there exists a unique solution $(v_M^1, \eta_M^1, v_M^2, \eta_M^2)$ to the system of coupled SPDEs
\begin{equation*}\label{movingboundarytruncate}\begin{split}
\frac{\partial v_M^1}{\partial t} & = \Delta v_M^1 - p^{\prime}(t) \frac{\partial (F_M(v_M^1))}{\partial x}+f_{1,M}(x, t, v_M^1(t,\cdot) )+ \sigma_{1,M}(x,t, v_M^1(t,\cdot))\dot{W} + \eta^1_M, 
\\ \frac{\partial v_M^2}{\partial t} & = \Delta v_M^2 + p^{\prime}(t) \frac{\partial (F_M(v_M^2))}{\partial x}+ f_{2,M}(x,t, v_M^2(t,\cdot) )+ \sigma_{2,M}(x,t, v_M^2(t,\cdot) )\dot{W}^- + \eta^2_M,
\\ p^{\prime}(t) & = h_M(v^1_M(t,\cdot), v^2_M(t,\cdot))
\end{split}
\end{equation*}
where $\dot{W}$ is space-time white noise with respect to the complete probability space $(\Omega, \mathscr{F}, \mathscr{F}_t, \mathbb{P})$, $v_M^1(t,0)=v_M^1(t,1)=v_M^2(t,0)=v_M^2(t,1)=0$ and initial data $(u^1_0, u^2_0) \in \mathscr{H} \times \mathscr{H}$ with $u_0^1, u_0^2 \geq 0$. The functions $f_{i,M}$, $\sigma_{i,M}$ and $h_M$ here are given by 
\begin{equation*}
f_{i,M}(x,t,u):= f_i(x,t,F_M(u)),
\end{equation*}
\begin{equation*}
\sigma_{i,M}(x,t,u):= \sigma_i(x,t,F_M(u)),
\end{equation*}
\begin{equation*}
h_M(u,v):= h(F_M(u),F_M(v)).
\end{equation*}
\end{thm}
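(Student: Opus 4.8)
The plan is to fix an arbitrary horizon $T>0$ and to construct the solution on $[0,T]$ by a Picard iteration in the Banach space $L^p(\Omega; C([0,T];\mathscr{H}))$ for $p$ large enough, say $p>22$ as required by Proposition \ref{vol1}; since $T$ is arbitrary and the construction will be consistent by uniqueness, the resulting objects glue to a solution on $[0,\infty)$. The essential gain from the truncation is that $F_M$ maps $\mathscr{H}$ into the ball $\{\|u\|_{\mathscr{H}} \leq M\}$, so that the local Lipschitz and growth hypotheses on $f_i,\sigma_i,h$ become \emph{global} for $f_{i,M},\sigma_{i,M},h_M$; in particular $h_M$ is bounded, so each candidate boundary velocity $p_{n}'$ is uniformly bounded. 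I would initialise with $v^{i,0}$ equal to the (reflected) heat flow from the initial data $u_0^i$, and then define $(v^{1,n+1},\eta^{1,n+1},v^{2,n+1},\eta^{2,n+1})$ as the solution of the decoupled pair of reflected SPDEs obtained by inserting the $n$-th iterate into the coefficients and into $p_{n+1}'(t)=h_M(v^{1,n}(t,\cdot),v^{2,n}(t,\cdot))$, exactly as in the scheme written at the start of Section 4.

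Each step of the iteration is well posed: the unreflected mild solution $z^{i,n+1}$ is the sum of the homogeneous term $\int_0^1 G(t,x,y)u_0^i(y)\,\textrm{d}y$ (which lies in $C([0,T];\mathscr{H})$ by heat-kernel smoothing), a drift term, a transport term and a stochastic term, and Theorem \ref{Obstacle H} then produces $v^{i,n+1}\in C([0,T];\mathscr{H})$ from the obstacle $z^{i,n+1}$ together with the reflection measure $\eta^{i,n+1}$. To see that the iterates lie in $L^p(\Omega;C([0,T];\mathscr{H}))$ and are bounded uniformly in $n$, I would apply Proposition \ref{drift1} to the drift term, Proposition \ref{drift2} to the transport term (written after integration by parts so that only $\partial_y G$ acting on $F_M(v^{i,n})$ appears, never a pointwise derivative of the iterate), and Proposition \ref{vol1} to the stochastic term, using the global linear growth of $f_{i,M},\sigma_{i,M}$; the contraction bound of Theorem \ref{Obstacle H} transfers these estimates from $z^{i,n+1}$ to $v^{i,n+1}$. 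A Gr\"{o}nwall argument then gives $\sup_n \mathbb{E}[\|v^{i,n}\|_{\mathscr{H},T}^p]<\infty$.

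For convergence I would bound the increments by reducing, via Theorem \ref{Obstacle H}, the difference $v^{i,n+1}-v^{i,n}$ to the difference of obstacles $z^{i,n+1}-z^{i,n}$, which is controlled by the three terms (\ref{A1}), (\ref{A2}) and (\ref{A3}). The drift term (\ref{A1}) and volatility term (\ref{A2}) are handled directly by Propositions \ref{drift1} and \ref{vol1} using the Lipschitz bounds on $f_{i,M},\sigma_{i,M}$. The transport term (\ref{A3}) requires splitting
\[
\frac{v^{i,n}}{y}(p^n)' - \frac{v^{i,n-1}}{y}(p^{n-1})' = \frac{v^{i,n}-v^{i,n-1}}{y}(p^n)' + \frac{v^{i,n-1}}{y}\big((p^n)'-(p^{n-1})'\big),
\]
bounding the first summand by $\|h_M\|_\infty\|v^{i,n}-v^{i,n-1}\|_{\mathscr{H}}$ and, for the second, using the Lipschitz property of $h_M$ to write $|(p^n)'-(p^{n-1})'|\leq C_M(\|v^{1,n}-v^{1,n-1}\|_{\mathscr{H}}+\|v^{2,n}-v^{2,n-1}\|_{\mathscr{H}})$ and absorbing the uniformly-$L^p$-bounded factor $\|v^{i,n-1}\|_{\mathscr{H}}$ by H\"{o}lder's inequality in $\Omega$ (this is where taking $p$ large is convenient). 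Writing $a_n(t)$ for the sum over $i=1,2$ of $\mathbb{E}[\|v^{i,n+1}-v^{i,n}\|_{\mathscr{H},t}^p]$, these estimates yield $a_n(t)\leq C_{T,M,p}\int_0^t a_{n-1}(s)\,\textrm{d}s$, and iterating this Volterra inequality gives $a_n(T)\leq (C_{T,M,p}T)^n a_0(T)/n!$, which is summable. Hence $(v^{1,n},v^{2,n})$ is Cauchy and converges to a limit $(v^1,v^2)$ in $L^p(\Omega;C([0,T];\mathscr{H}))$, with $p_n\to p$.

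Finally I would identify the limit as a solution in the sense of Definition \ref{spde solution}: passing to the limit in the mild/weak identity (\ref{111}) is justified by the Lipschitz bounds, while positivity and the contact condition $\int v^i\,\textrm{d}\eta^i=0$ survive because the obstacle solution map is continuous (Theorem \ref{Obstacle H} and the corresponding result in \cite{NP}), and adaptedness is preserved under the limit. Uniqueness follows by applying the same three estimates to the difference $v-\bar{v}$ of two solutions, which gives $\mathbb{E}[\|v-\bar v\|_{\mathscr{H},t}^p]\leq C\int_0^t \mathbb{E}[\|v-\bar v\|_{\mathscr{H},s}^p]\,\textrm{d}s$ and hence $v\equiv\bar v$ by Gr\"{o}nwall. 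The main obstacle throughout is the transport term (\ref{A3}): it is the only place where the two components couple through $h_M$ and where a pointwise spatial derivative threatens to appear, so closing the contraction there, using the derivative-of-kernel estimate of Proposition \ref{drift2} together with the uniform moment bounds on the iterates, is the crux of the argument.
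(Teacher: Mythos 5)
Your overall strategy coincides with the paper's: a Picard iteration in $L^p(\Omega;C([0,T];\mathscr{H}))$ for $p>22$, with each iterate obtained by solving the unreflected equation in mild form and then applying the obstacle map, the contraction transferred from the obstacles $z^{i,n}$ to the reflected iterates $v^{i,n}$ via Theorem \ref{Obstacle H}, the increments controlled by Propositions \ref{drift1}, \ref{drift2} and \ref{vol1}, and uniqueness by Gr\"{o}nwall. The gluing over arbitrary $T$ and the identification of the limit as a solution are also as in the paper.

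The one place where your argument as written does not close is the transport term. You split the cross-difference and propose to absorb the factor $\|v^{i,n-1}\|_{\mathscr{H}}$ by H\"{o}lder's inequality in $\Omega$ together with uniform moment bounds on the iterates. But H\"{o}lder gives $\mathbb{E}\left[\|v^{i,n-1}\|^p_{\mathscr{H},s}\,|(p^n)'-(p^{n-1})'|^p\right] \leq \mathbb{E}\left[\|v^{i,n-1}\|^{pr'}_{\mathscr{H},s}\right]^{1/r'}\mathbb{E}\left[|(p^n)'-(p^{n-1})'|^{pr}\right]^{1/r}$, and the second factor controls the increment in $L^{pr}(\Omega)$ rather than $L^p(\Omega)$; the Volterra inequality $a_n(t)\le C\int_0^t a_{n-1}(s)\,\mathrm{d}s$ then no longer telescopes at a fixed exponent, so the factorial decay does not follow as stated. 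The paper avoids this entirely: in the truncated equation the transported profile is $F_M(v^{i,n-1})$, not $v^{i,n-1}$, and $\|F_M(u)\|_{\mathscr{H}}\le M$ deterministically while $F_M$ is $1$-Lipschitz on $\mathscr{H}$. Hence the product $g_n=h_M(v^{1,n}(t,\cdot),v^{2,n}(t,\cdot))F_M(v^{1,n}(t,\cdot))$ is globally Lipschitz from $\mathscr{H}\times\mathscr{H}$ into $\mathscr{H}$ with a deterministic constant $C_{M,h}$ (this is inequality (\ref{trunc2})), and Proposition \ref{drift2} applied to $g_n-g_{n-1}$ gives the increment bound directly, with no H\"{o}lder in $\Omega$ and no a priori moment bounds on the iterates required. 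You in fact noted at the outset that $F_M$ maps $\mathscr{H}$ into the ball of radius $M$; you just need to use that deterministic bound in the contraction step in place of the H\"{o}lder absorption, after which your argument matches the paper's.
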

\begin{proof}
Note that existence and uniqueness for the problem on the infinite time interval $[0,\infty)$ follow from existence and uniqueness on the finite time intervals $[0,T]$, for $T>0$. Fix some $T>0$ and some $p> 22$. We perform a Picard iteration in the space $L^p(\Omega, C([0,T];\mathscr{H})) \times L^p(\Omega, C([0,T];\mathscr{H}))$. Let $v^{i,0}(t,x)=u^i_0(x)$ for $i=1,2$. Given $(v^{1,n},v^{2,n}) \in L^p(\Omega; C([0,T];\mathscr{H})) \times L^p(\Omega; C([0,T];\mathscr{H}))$, we define $z^{1,n+1}$ and $z^{2,n+1}$ to solve the equations
\begin{equation}\label{mild trunc}\begin{split}
\frac{\partial z^{1,n+1}}{\partial t}= & \Delta z^{1,n+1} - h_M\left( v^{1,n}(t,\cdot), v^{2,n}(t,\cdot) \right)\frac{\partial (F_M(v^{1,n}))}{\partial x} \\ & + f_{1,M}(x,t,v^{1,n}(t,\cdot)) + \sigma_{1,M}(x,t,v^{1,n}(t,\cdot))\dot{W},
\end{split}
\end{equation}
and 
\begin{equation}\label{mild trunc}\begin{split}
\frac{\partial z^{2,n+1}}{\partial t}= & \Delta z^{2,n+1} + h_M\left( v^{1,n}(t,\cdot), v^{2,n}(t,\cdot) \right)\frac{\partial (F_M(v^{2,n}))}{\partial x} \\ & + f_{2,M}(x,t,v^{2,n}(t,\cdot)) + \sigma_{2,M}(x,t,v^{2,n}(t,\cdot))\dot{W}^{-},
\end{split}
\end{equation}
with initial data $u^1_0$ and $u^2_0$ respectively. We then define, for $i=1,2$,  $w^{i,n+1}$ to be given by the solution to the obstacle problem
\begin{equation*}\begin{split}
& \frac{\partial w^i_{n+1}}{\partial t}= \Delta w^i_{n+1} + \eta_{n+1}^i, \; \; \; \; \; w_{n+1}^i \geq -z_{n+1}^i, \\ & \int_0^T \int_0^1 \left[ w_{n+1}^i(t,x) + z_{n+1}^i(t,x) \; \right] \eta_{n+1}^i(\textrm{d}x,\textrm{d}t) =0, 
\end{split}
\end{equation*}
and set our $(n+1)^{\textrm{th}}$ approximate solution to be given by $(v^{1,n+1},v^{2,n+1}):= (z^{1,n+1}+w^{1,n+1}, z^{2,n+1}+w^{2,n+1})$. For ease of notation, we define here 
\begin{equation*}
g_n(t,x):= h_M\left( v^{1,n}(t,\cdot), v^{2,n}(t,\cdot) \right)F_M(v^{1,n}(t,x)).
\end{equation*} Writing $z^{1,n+1}$ in mild form, we have that 
\begin{equation*}\begin{split}
z^{1,n+1}(t,x)= & \int_0^1 G(t,x,y)u_0^1(y) \textrm{d}y \\ & + \int_0^t \int_0^1 \frac{\partial G}{\partial y}(t-s,x,y) g_n(s,y) \textrm{d}y \textrm{d}s \\ & + \int_0^t \int_0^1 G(t-s,x,y)f_{1,M}(y,s,v^{1,n}(s,\cdot)) \textrm{d}x\textrm{d}s \\ & + \int_0^t \int_0^1 G(t-s,x,y) \sigma_{1,M}(y,s,v^{1,n}(s,\cdot)) \textrm{W}(\textrm{d}y,\textrm{d}s).
\end{split}
\end{equation*}
Note that, by Propositions \ref{drift1}, \ref{drift2} and \ref{vol1}, we can see from this expression that $z^{1,n+1} \in C([0,T];\mathscr{H})$ almost surely. Using the analogous expression for $z^{1,n}$ and taking the difference, we see that
\begin{equation}\label{Picard bound}\begin{split}
\mathbb{E}& \left[ \| z^{1,n+1}- z^{1,n}\|^p_{\mathscr{H},T} \right]   \\ \leq &   \mathbb{E} \left[ \left\| \int_0^t \int_0^1 \frac{\partial G}{\partial y}(t-s,x,y) \left[ g_{n}(s,y)- g_{n-1}(s,y) \right] \textrm{d}y  \textrm{d}s \right\|^p_{\mathscr{H},T} \right] \\ & + \mathbb{E} \left[ \left\| \int_0^t \int_0^1 G(t-s,x,y)\left[ f_{1,M}(y,s,v^{1,n}(s,\cdot))- f_{1,M}(y,s,v^{1,n-1}(s,\cdot)) \right] \textrm{d}x\textrm{d}s \right\|_{\mathscr{H},T}^p \right]  \\ & + \mathbb{E}\left[ \left\| \int_0^t \int_0^1 G(t-s,x,y) \left[ \sigma_{1,M}(y,s,v^{1,n}(s,\cdot))- \sigma_{1,M}(y,s,v^{1,n-1}(s,\cdot)) \right] \textrm{W}(\textrm{d}y,\textrm{d}s) \right\|_{\mathscr{H},T}^p \right].
\end{split}
\end{equation}
Note that 
\begin{equation}\label{trunc2}
\|g_n- g_{n-1}\|_{\mathscr{H},s} \leq C_{M,h} \left( \|v^{1,n}-v^{1,n-1}\|_{\mathscr{H},s} + \|v^{2,n}-v^{2,n-1}\|_{\mathscr{H},s} \right). 
\end{equation}
By making use of Propositions \ref{drift1}, \ref{drift2} and \ref{vol1}, we are able to bound (\ref{Picard bound}). We obtain that 
\begin{equation*}
\begin{split}
\mathbb{E} \left[ \|z^{1,n+1}- z^{1,n}\|_{\mathscr{H},T}^p \right] \leq & C_{T,p} \int_0^T \mathbb{E}\left[ \|f_{1,M}(\cdot,s, v^{1,n}(s, \cdot))- f_{1,M}(\cdot,s, v^{1,n-1}(s, \cdot)) \|_{\infty}^p \right] \textrm{d}s \\ & + \int_0^T \mathbb{E}\left[\|g_n-g_{n-1}\|_{\mathscr{H},s}^p \right] \textrm{d}s \\ &  + \int_0^T  \mathbb{E}\left[\|\sigma_{1,M}(\cdot,s, v^{1,n}(s, \cdot))- \sigma_{1,M}(\cdot,s, v^{1,n-1}(s, \cdot))\|_{\mathscr{H}}^p \right] \;   \textrm{d}s.
\end{split}
\end{equation*}
By the Lipschitz-type conditions on $f, \sigma$ and the inequality (\ref{trunc2}), we see that this is at most
\begin{equation*}
C_{M,h,T,p} \int_0^T \mathbb{E}\left[\|v^{1,n}-v^{1,n-1}\|_{\mathscr{H},s}^p + \|v^{2,n}-v^{2,n-1}\|_{\mathscr{H},s}^p \right] \textrm{d}s.
\end{equation*}
By arguing in the same way, we obtain the same bound for $\mathbb{E}\left[ \|z^{2,n+1}-z^{2,n} \|_{\mathscr{H},T}^p \right].$ Adding these together gives that 
\begin{equation}\label{intbound}\begin{split}
\mathbb{E}& \left[ \|z^{1,n+1}- z^{1,n}\|_{\mathscr{H},T}^p \right] + \mathbb{E}\left[ \|z^{2,n+1}-z^{2,n} \|_{\mathscr{H},T}^p \right] \\ & \leq C_{M,h,T,p} \int_0^T \mathbb{E}\left[ \|v^{1,n}-v^{1,n-1}\|_{\mathscr{H},s}^p \right] + \mathbb{E} \left[ \|v^{2,n}-v^{2,n-1}\|_{\mathscr{H},s}^p \right] \textrm{d}s.
\end{split}
\end{equation}
By Theorem \ref{Obstacle H}, we have that for $i=1,2$,
\begin{equation*}
\|w^{i,n+1}-w^{i,n} \|_{\mathscr{H},T} \leq \|z^{i,n+1}-z^{i,n}\|_{\mathscr{H},T}
\end{equation*}
almost surely. Therefore, for $i=1,2$,
\begin{equation}\label{appic}
\|v^{i,n+1}-v^{i,n} \|_{\mathscr{H},T} \leq 2\|z^{i,n+1}-z^{i,n}\|_{\mathscr{H},T}
\end{equation}
almost surely. It follows from (\ref{intbound}) and (\ref{appic}) that 
\begin{equation*}\begin{split}
\mathbb{E}& \left[ \|v^{1,n+1}- v^{1,n}\|_{\mathscr{H},T}^p \right] + \mathbb{E}\left[ \|v^{2,n+1}-v^{2,n} \|_{\mathscr{H},T}^p \right] \\ & \leq C_{M,h,T,p} \int_0^T \mathbb{E}\left[ \|v^{1,n}-v^{1,n-1}\|_{\mathscr{H},s}^p \right] + \mathbb{E} \left[ \|v^{2,n}-v^{2,n-1}\|_{\mathscr{H},s}^p \right] \textrm{d}s.
\end{split}
\end{equation*}
By iterating this inequality, we obtain that 
\begin{equation*}\begin{split}
\mathbb{E}& \left[\|v^{1,n+1}-v^{1,n}\|_{\mathscr{H},T}^p+ \|v^{2,n+1}-v^{2,n}\|_{\mathscr{H},T}^p \right] \\ \leq & C_{M,p,T} \int_0^T \mathbb{E}\left[\|v^{1,n}-v^{1,n-1}\|_{\mathscr{H},s}^p+ \|v^{2,n}-v^{2,n-1}\|_{\mathscr{H},s}^p \right] \textrm{d}s \\ \leq & C_{M,p,T}^2 \int_0^T \int_0^s \mathbb{E}\left[\|v^{1,n-1}-v^{1,n-2}\|_{\mathscr{H},u}^p+ \|v^{2,n-1}-v^{2,n-2}\|_{\mathscr{H},u}^p \right] \textrm{d}u \;  \textrm{d}s \\ = & C_{M,p,T}^2 \int_0^T \int_u^T \mathbb{E}\left[\|v^{1,n-1}-v^{1,n-2}\|_{\mathscr{H},u}^p+ \|v^{2,n-1}-v^{2,n-2}\|_{\mathscr{H},u}^p \right] \textrm{d}s \;  \textrm{d}u \\ = & C_{M,p,T}^2 \int_0^T  \mathbb{E}\left[\|v^{1,n-1}-v^{1,n-2}\|_{\mathscr{H},u}^p+ \|v^{2,n-1}-v^{2,n-2}\|_{\mathscr{H},u}^p \right](T-u) \;  \textrm{d}u.
\end{split}
\end{equation*}
Therefore,
\begin{equation*}\begin{split}
\mathbb{E}& \left[\|v^{1,n+1}-v^{1,n}\|_{\mathscr{H},T}^p+ \|v^{2,n+1}-v^{2,n}\|_{\mathscr{H},T}^p \right] \\  & \leq C_{M,p,T}^n \int_0^T \mathbb{E}\left[ \|v^{1,1}-v^{1,0} \|_{\mathscr{H},s}^p + \|v^{2,1}-v^{2,0}\|_{\mathscr{H},s}^p \right] \frac{(T-s)^{n-1}}{(n-1)!} \textrm{d}s \\ & \leq C_{M,p,T}^n \times \mathbb{E} \left[ \|v^{1,1}-v^{1,0} \|_{\mathscr{H},s}^p + \|v^{2,1}-v^{2,0}\|_{\mathscr{H},s}^p  \right] \frac{T^{n}}{n!}.
\end{split}
\end{equation*}
Hence, for $m > n \geq 1$, we have that 
\begin{equation*}\begin{split}
\mathbb{E}& \left[\|v^{1,m}-v^{1,n}\|_{\mathscr{H},T}^p+ \|v^{2,m}-v^{2,n}\|_{\mathscr{H},T}^p \right]^{1/p} \\ & \leq \sum\limits_{k=n}^{m-1} \left[ \frac{\tilde{C}_{M,p,T}^kT^k}{k!} \right]^{1/p} \mathbb{E}\left[ \|v_1^1-v_0^1\|_{\mathscr{H},T}^p + \|v_1^2-v_0^2\|_{\mathscr{H},T}^p \right]^{1/p} \rightarrow 0.
\end{split}
\end{equation*}
as $m,n \rightarrow \infty$. Therefore, the sequence $(v^{1,n},v^{2,n})$ is Cauchy in the space $L^p(\Omega; C([0,T]; \mathscr{H})))^2$ and so converges to some pair $(v^1_M,v^2_M)$. We now verify that this is indeed a solution to our evolution equation. Let $(z^1_M,z^2_M)$ solve the SPDEs 
\begin{equation*}
\frac{\partial z_M^1}{\partial t} = \Delta z_M^1 -p^{\prime}(t)\frac{\partial (F_M(v_M^1))}{\partial x}+f_{1,M}(x, t, v_M^1 )+ \sigma_{1,M}(x,t, v_M^1)\dot{W},
\end{equation*}
\begin{equation*}
\frac{\partial z_M^2}{\partial t} = \Delta z_M^2 +p^{\prime}(t)\frac{\partial (F_M(v_M^2))}{\partial x}+f_{2,M}(x, t, v_M^2 )+ \sigma_{2,M}(x,t, v_M^2)\dot{W}^-,
\end{equation*}
where 
\begin{equation*}
p^{\prime}(t):= h_M\left( v_M^1(t, \cdot), v_M^2(t,\cdot)  \right).
\end{equation*}
We then define $(w^{i}_M, \eta_M^i)$ as solutions to the obstacle problem, with obstacles $z^i_M$, and set $\tilde{v}^i_M:= z^i_M + w^i_M$. Therefore, we have that 
\begin{equation*}
\frac{\partial \tilde{v}_M^1}{\partial t} = \Delta \tilde{v}_M^1 -p^{\prime}(t)\frac{\partial (F_M(v_M^1))}{\partial x}+f_{1,M}(x, t, v_M^1)+ \sigma_{1,M}(x,t, v_M^1)\dot{W}+ \eta^1_M,
\end{equation*}
\begin{equation*}
\frac{\partial \tilde{v}_M^2}{\partial t} = \Delta \tilde{v}_M^2 +p^{\prime}(t)\frac{\partial (F_M(v_M^2))}{\partial x}+f_{2,M}(x, t, v_M^2 )+ \sigma_{2,M}(x,t, v_M^2)\dot{W}+ \eta^2_M.
\end{equation*}
By reproducing the same argument as that used to achieve the estimate (\ref{appic}), we are able to show that 
\begin{equation*}\begin{split}
\mathbb{E}& \left[ \|\tilde{v}^1_M-v^{1,n}\|_{\mathscr{H},T}^p + \|\tilde{v}^2_M -v^{2,n}\|_{\mathscr{H},T}^p \right] \\ & \leq  C_{M,h,T,p} \int_0^T \mathbb{E}\left[ \|v^1_M- v^{1,n}\|_{\mathscr{H},s}^p + \|v^2_M- v^{2,n} \|_{\mathscr{H},s}^p \right] \textrm{d}s \rightarrow 0.
\end{split}
\end{equation*}
It follows that $\tilde{v}_M^1= v^1_M$ in $L^p(\Omega; C([0,T];\mathscr{H}))$. The same applies to $v^2_M$, so it follows that the pair $(v^1_M,v^2_M)$, together with the reflection measures $(\eta^1_M, \eta^2_M)$, do indeed satisfy our problem. 

Uniqueness follows by essentially the same argument. Given two solutions with the same initial data, $(v^1_1,v^2_1)$ and $(v_2^1,v_2^2)$ (together with their reflection measures), we argue as before to obtain that, for $t \in [0,T]$, 
\begin{equation*}
\mathbb{E}\left[\|v^1_1-v_2^1\|_{\mathscr{H},t}^p+ \|v^2_1-v^2_2\|_{\mathscr{H},t}^p \right] \leq \int_0^t \mathbb{E}\left[\|v^1_1-v_2^1\|_{\mathscr{H},s}^p+ \|v^2_1-v^2_2\|_{\mathscr{H},s}^p \right] \textrm{d}s.
\end{equation*}
The equivalence then follows by Gronwall's inequality.
\end{proof}
Before stating and proving our main theorem, we first prove the following technical result which gives us a universal representation for terminal times of maximal solutions, and a canonical localising sequence.
\begin{prop}\label{local}
Let $(v^1, \eta^1, v^2,\eta^2)$ be a maximal solution to the moving boundary problem in the relative frame. Then we have that the terminal time, $\tau$, is given by 
\begin{equation*}
\tau:= \sup\limits_{M >0} \tau_M,
\end{equation*}
where 
\begin{equation*}
\tau_M:= \inf\left\{ t\geq 0 \; | \; \|v^1\|_{\mathscr{H},t} + \|v^2\|_{\mathscr{H},s} \geq M \right\}.
\end{equation*}
Furthermore, $\tau_M$ can be taken as a localising sequence for the solution.
\end{prop}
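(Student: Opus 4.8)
The plan is to connect the given maximal solution to the family of truncated solutions produced by Theorem \ref{truncate theorem}, exploiting the fact that the truncation map $F_M$ acts as the identity on the ball $\{\|\cdot\|_{\mathscr{H}} \leq M\}$. First I would record that, since $v^1, v^2 \in C([0,\tau);\mathscr{H})$ are adapted, the running norm $N(t) := \|v^1\|_{\mathscr{H},t} + \|v^2\|_{\mathscr{H},t}$ is continuous and nondecreasing on $[0,\tau)$ and equals $+\infty$ for $t \geq \tau$, because $v^i \equiv \infty$ there by property (iv) of Definition \ref{spde solution}. Consequently each $\tau_M = \inf\{t : N(t) \geq M\}$ is a genuine $\mathscr{F}_t$-stopping time (the hitting time of a closed set by a continuous adapted process), the family $(\tau_M)_M$ is nondecreasing in $M$, and $\tau_M \leq \tau$ for every $M$ since $N(\tau) = \infty$.

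The heart of the argument is to show that on $[0,\tau_M)$ the maximal solution coincides with the unique global solution $(v^1_M, \eta^1_M, v^2_M, \eta^2_M)$ of the $M$-truncated system. For $t < \tau_M$ we have $\|v^i(s,\cdot)\|_{\mathscr{H}} < M$ for all $s \leq t$, so $F_M(v^i(s,\cdot)) = v^i(s,\cdot)$ and hence $f_{i,M}, \sigma_{i,M}, h_M$ agree with $f_i, \sigma_i, h$ along the solution; thus $(v^1,v^2)$ solves the $M$-truncated system up to $\tau_M$. Letting $\hat{\tau}_M$ denote the corresponding exit time for $(v^1_M, v^2_M)$ and setting $\rho := \tau_M \wedge \hat{\tau}_M$, both pairs solve the same ($M$-truncated) system on $[0,\rho)$ from the same initial data. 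I would then localise the uniqueness argument of Theorem \ref{truncate theorem}: stopping the processes at $\rho$ and applying the mild-form estimates of Propositions \ref{drift1}, \ref{drift2} and \ref{vol1} together with Theorem \ref{Obstacle H} to the difference yields a Gronwall inequality for $\mathbb{E}\big[\|v^1 - v^1_M\|^p_{\mathscr{H}, t\wedge\rho} + \|v^2 - v^2_M\|^p_{\mathscr{H}, t\wedge\rho}\big]$, forcing the two solutions, and with them their reflection measures, to agree on $[0,\rho)$. Since coincident paths exit any ball at the same instant, this gives $\tau_M = \hat{\tau}_M$, whence $\rho = \tau_M$ and the claimed coincidence on $[0,\tau_M)$.

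With the coincidence established, the representation $\tau = \sup_M \tau_M$ follows from a continuity argument. Writing $\tau_\infty := \sup_M \tau_M \leq \tau$, suppose $\tau_\infty < \tau$ on a set of positive probability. There $v^i$ is continuous up to $\tau_\infty < \tau$, so $N(\tau_\infty) < \infty$; choosing any $M > N(\tau_\infty)$ gives $N(t) < M$ for all $t \leq \tau_\infty$ and therefore $\tau_M > \tau_\infty$, contradicting $\tau_M \leq \tau_\infty$. Hence $\tau_\infty = \tau$ almost surely, and this reasoning is insensitive to whether $\tau$ is finite. To see that $(\tau_M)$ is a localising sequence, note that $\tau_M \uparrow \tau$ by monotonicity and the representation, while the weak formulation (\ref{111}) up to $t \wedge \tau_M$ is inherited directly from the truncated solution: on $[0,\tau_M)$ the truncated coefficients equal the untruncated ones, so the identity (\ref{111}) for $(v^1_M, v^2_M)$ restricts to the required identity for $(v^1,v^2)$ stopped at $\tau_M$.

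The main obstacle is the coincidence step. The uniqueness in Theorem \ref{truncate theorem} is stated for global solutions of the truncated system with globally controlled coefficients, whereas here I must compare a genuinely local solution of the untruncated problem with a global truncated one. The care required lies in localising the $L^p(\Omega; C([0,T];\mathscr{H}))$ estimates, in particular ensuring that the stochastic-convolution bound of Proposition \ref{vol1} survives stopping at $\rho$, and in checking that the reflection measures, which are determined only implicitly through the obstacle problem, also coincide once the driving processes agree up to $\rho$; the contraction estimate of Theorem \ref{Obstacle H} is precisely what makes this last point automatic.
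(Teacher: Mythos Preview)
Your proposal is correct and follows essentially the same approach as the paper: both arguments hinge on identifying the given maximal solution, on each $[0,\tau_M)$, with the unique global solution of the $M$-truncated system from Theorem \ref{truncate theorem}, using the Gronwall/obstacle-contraction machinery to obtain coincidence. The only notable difference is cosmetic: the paper starts from an \emph{arbitrary} localising sequence $\sigma_n$ and uses maximality (``otherwise propagate via the truncated solution'') to conclude $\sup_n \sigma_n = \sup_M \tau_M$, whereas you take the terminal time $\tau$ as given and run a direct continuity argument on $N(t)$ to rule out $\sup_M \tau_M < \tau$; both lines are valid and lead to the same place.
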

\begin{proof}
Let $\sigma_n$ be a localising sequence for the solution. Define $\sigma:= \sup_{n \geq 1} \sigma_n$. Fix some $M>0$ and consider the localising sequence $(\sigma_n \wedge \tau_M)_{n \geq 1}$ for the solution, with this solution on a potentially smaller stochastic interval. Taking the limit as $n \rightarrow \infty$ for each $M>0$, we obtain a local solution which agrees with $(v^1, \eta^1, v^2,\eta^2)$ until $\tau \wedge \sigma$ (its own terminal time), and has the localising sequence $(\sigma \wedge \tau_M)_{M >0}$. Arguing as in Theorem \ref{truncate theorem}, for every $M>0$, this must agree with the $M$-truncated problem until the time $\sigma \wedge \tau_M$. Therefore, for every $M>0$, we have that $\sigma \geq \tau_M$, as otherwise we could use the solution to the $M$-truncated problem to propagate the solution to a later time. Hence, the localising sequence $\sigma \wedge \tau_M$ is simply just $\tau_M$. In addition, we have that $\sigma \geq \tau_M$ for every $M$ almost surely, which implies that $\sigma \geq \tau$ almost surely. Clearly, by agreement with the $M$-truncated problems until the times $\tau_M$, we have that $\tau \geq \sigma$, since the solution cannot be propagated beyond a blow-up in the $\mathscr{H}$-norm. We therefore have the result.
\end{proof}

We are now in position to prove our main result, Theorem \ref{main}.
\begin{proof}[Proof of Theorem \ref{main}]
We prove existence of a maximal solution by concatenating solutions to the truncated problems. For $M>0$, let $\mathscr{V}_M:=(v^1_M, \eta^1_M, v^2_M, \eta^2_M)$ solve the $M$-truncated problem as in Theorem \ref{truncate theorem}. Note that for $M_1\geq M_2>0$, $\mathscr{V}_{M_1}$ solves (\ref{movingboundary}) until the random stopping time 
\begin{equation*}
\tau:= \inf \left\{ t \in [0,\infty) \; | \; \max\left\{\|v^1_{M_1}\|_{\mathscr{H},t}, \|v^2_{M_1}\|_{\mathscr{H},t} \right\} \geq M_2 \right\}. 
\end{equation*}
Let $z^i_{M_j}$ solve the unreflected SPDEs associated with $v^i_{M_j}$ for $i,j=1,2$, as in the equation (\ref{mild trunc}). By considering the difference between $z^i_{M_1}(t,x) \mathbbm{1}_{\left\{t \leq \tau \right\}}$ and $z^i_{M_2}(t,x) \mathbbm{1}_{\left\{t \leq \tau \right\}}$ for $i=1,2$, we are able to argue as in the proof of Theorem \ref{truncate theorem} to obtain that, for $T>0$, $p> 22$ and $t \in [0,T]$
\begin{equation*}\begin{split}
\mathbb{E}& \left[ \|z^1_{M_1}- z^1_{M_2} \|_{\mathscr{H}, \tau\wedge t}^p + \|z^2_{M_1}- z^2_{M_2} \|_{\mathscr{H}, \tau\wedge t}^p \right] \\ & \leq C_{T,p} \int_0^t \mathbb{E}\left[  \|z^1_{M_1}- z^1_{M_2} \|_{\mathscr{H}, \tau\wedge s}^p + \|z^2_{M_1}- z^2_{M_2} \|_{\mathscr{H}, \tau\wedge s}^p \right] \textrm{d}s.
\end{split}
\end{equation*}
Gronwall's lemma then applies to give that $z^i_{M_1}=z^i_{M_2}$ almost surely for $i=1,2$, until the random time $\tau \wedge T$. Since $(v^i_{M_j}, \eta^i_{M_j})$ are simply the solutions to the obstacle problem with obstacles $z^i_{M_j}$, it follows that $(v^1_{M_1}, \eta^1_{M_1},v^2_{M_1},\eta^2_{M_1})=(v^1_{M_2}, \eta^1_{M_2},v^2_{M_2},\eta^2_{M_2})$ until time $\tau \wedge T$.  This holds for every $T>0$, so we have that they almost surely agree until time $\tau $. We have therefore shown that the solutions to our $M$-truncated problems are consistent. This allows us to concatenate them. We define the process $(v^1,\eta^1, v^2, \eta^2)$ such that, for all $M>0$, it agrees with $(v^1_{M}, \eta^1_{M},v^2_{M},\eta^2_{M})$ until the time 
\begin{equation*}
\tau_M := \inf \left\{ t \in [0,\infty) \; | \; \max\left\{\|v^1_{M}\|_{\mathscr{H},t}, \|v^2_{M}\|_{\mathscr{H},t} \right\} \geq M \right\}.
\end{equation*}
Note that 
\begin{equation*}
\tau_M=\inf \left\{ t \in [0,\infty) \; | \; \max\left\{\|v^1\|_{\mathscr{H},t}, \|v^2\|_{\mathscr{H},t} \right\} \geq M \right\}.
\end{equation*}
Since $\mathscr{V}_M$ solves (\ref{movingboundary}) until time $\tau_M$, we have that  $(v^1,\eta^1, v^2, \eta^2)$ solves (\ref{movingboundary}) on the random interval $[0, \tilde{\tau})$, where
\begin{equation*}
\tilde{\tau}= \sup\limits_{M >0} \tau_M,
\end{equation*}
with localising sequence $\tau_M$. This solution is maximal since, on the set $\left\{ \tilde{\tau} < \infty \right\}$, we have that 
\begin{equation*}
\max\left\{\|v^1\|_{\mathscr{H},t}, \|v^2\|_{\mathscr{H},t} \right\} \rightarrow \infty
\end{equation*}
as $t \uparrow \tilde{\tau}$ almost surely. Therefore, we know that a maximal solution exists. Uniqueness follows by the same arguments as those made for consistency among solutions to the truncated problems. Fix $T>0$ and let $(\tilde{v}^1, \tilde{\eta}^1, \tilde{v}^2, \tilde{\eta}^2)$ be a maximal solution to (\ref{movingboundary}). Note that the localising sequence can be taken as in Proposition \ref{local}. We then have that for every $M>0$, it must agree with the solution to the $M$-truncated problem, and therefore with $(v^1,\eta^1, v^2, \eta^2)$, until 
\begin{equation*}
\inf \left\{ t \in [0,T] \; | \;  \max \left\{\|\tilde{v}^1\|_{\mathscr{H},t}, \|\tilde{v}^2\|_{\mathscr{H},t} \right\} \geq M \right\}.  
\end{equation*}
Uniqueness then follows.
\end{proof}
We can now show that, as one might expect, blow up for the problem occurs precisely when the boundary speed blows up.
\begin{prop}\label{der blow up}
Let $(v^1, \eta^1, v^2, \eta^2)$ be a solution to the moving boundary problem in the relative frame on the maximal interval $[0,\tau)$. Then 
\begin{equation*}
\tau= \sup\limits_{M>0} \sigma_M,
\end{equation*}
where 
\begin{equation*}
\sigma_M=\inf \left\{ t \in [0,\infty) \; | \; h(v^1(t,\cdot), v^2(t,\cdot)) \geq M \right\}.
\end{equation*}
\begin{proof}
By Proposition \ref{local}, and since $h$ is bounded on bounded sets in $\mathscr{H} \times \mathscr{H}$, it is sufficient to show that for every $M>0$, $\sigma_M \leq \tau$ almost surely. Suppose that, for every $T>0$,
\begin{equation}\label{condition}
\mathbb{E}\left[ \sup\limits_{t \in [0, (\sigma_M \wedge \tau \wedge T))} \left[ \|v^1(t,\cdot) \|_{\mathscr{H}}^p +  \|v^2(t,\cdot) \|_{\mathscr{H}}^p \right] \right] < \infty.
\end{equation}
Then, since
\begin{equation*}
\sup\limits_{ t \in [0,\tau \wedge T)} \left[ \|v^1(t,\cdot) \|_{\mathscr{H}} +  \|v^2(t,\cdot) \|_{\mathscr{H}} \right] = \infty
\end{equation*}
almost surely on the set that $\left\{ \tau \leq T \right\}$ we must have that $\sigma_M < \tau \wedge T$ almost surely on the set $\left\{ \tau \leq T \right\}$. Therefore, for every $T>0$, we would have that 
\begin{equation*}
\sigma_M \mathbbm{1}_{\left\{ \tau \leq T \right\}} < \tau \wedge T
\end{equation*}
almost surely. Letting $T \uparrow \infty$ (via a countable sequence), we obtain that 
\begin{equation*}
\sigma_M\mathbbm{1}_{\left\{ \tau < \infty \right\}} \leq \tau
\end{equation*} almost surely, so that $\sigma_M \leq \tau$ almost surely, giving the result. So it is sufficient to prove that (\ref{condition}) holds for every $M,T>0$. Fix some $T>0$ and some $M>0$. By Theorem \ref{Obstacle H},
\begin{equation*}
\mathbb{E}\left[ \sup\limits_{ t \in [0,(\sigma_M \wedge \tau\wedge T))} \|v^1(t,\cdot) \|_{\mathscr{H}}^p  \right] \leq 2^p \mathbb{E}\left[ \sup\limits_{ t \in [0,(\sigma_M \wedge \tau \wedge T))} \|z^1(t,\cdot) \|_{\mathscr{H}}^p  \right],
\end{equation*}
where $z^1$ solves the unreflected problem corresponding to $v^1$ on the random interval $[0,\tau \wedge T)$. That is, $z^1$ solves 
\begin{equation*}\begin{split}
\frac{\partial z^{1}}{\partial t}=  \Delta z^{1} - h(v^1(t,\cdot), v^2(t,\cdot))\frac{\partial v^{1}}{\partial x} + f(x,t, v^{1}(t,\cdot)) + \sigma(x,t,v^{1}(t,\cdot))\dot{W}(\textrm{d}x, \textrm{d}t),
\end{split}
\end{equation*}
on the interval $[0,\tau \wedge T)$, where we take the localising sequence $$\tau_N:= \inf \left\{ t \in [0,\infty) \; | \; \max\left\{ \|v^1\|_{\mathscr{H}, t}+ \|v^2\|_{\mathscr{H}, t} \right\} \geq N \right\}.$$ Writing $z^1$ in mild form, we have that for $t \in [0,T]$
\begin{equation}\label{longeqn}\begin{split}
z^1 (t,x)&   \mathbbm{1}_{\left\{ t < \sigma_M \wedge \tau_N \wedge T \right\} }= \\  &  \mathbbm{1}_{\left\{ t < \sigma_M \wedge \tau _N \wedge T \right\} } \left[\int_0^1 G(t,x,y)u_0^1(y) \textrm{d}y \right] + \\ & \mathbbm{1}_{\left\{ t < \sigma_M \wedge \tau_N \wedge T \right\} } \left[ \int_0^t \int_0^1 \frac{\partial G}{\partial y}(t-s,x,y)h(v^1(s,\cdot), v^2(s,\cdot)) v^1(s,y)   \textrm{d}y \textrm{d}s \right] +  \\ & \mathbbm{1}_{\left\{ t < \sigma_M \wedge \tau_N \wedge T \right\} } \left[ \int_0^t \int_0^1 G(t-s,x,y)f(y,s, v^{1}(s,\cdot)) \textrm{d}x\textrm{d}s \right] + \\ & \mathbbm{1}_{\left\{ t < \sigma_M \wedge \tau_N \wedge T\right\} } \left[  \int_0^t \int_0^1 G(t-s,x,y) \sigma(y,s, v^{1}(s,\cdot)) \textrm{W}(\textrm{d}y,\textrm{d}s)\right],
\end{split}
\end{equation}
We aim to bound the $L^p(\Omega ; L^{\infty}([0,T]; \mathscr{H}))$ norms of the four terms on the right hand side uniformly over $N$. The first term is simple, so we omit the proof for this. For the second term, note that 
\begin{equation*}\begin{split}
\mathbbm{1}_{\left\{ t < \sigma_M \wedge \tau_N \wedge T \right\} } &  \left[ \int_0^t \int_0^1 \frac{\partial G}{\partial y}(t-s,x,y)h(v^1(s,\cdot), v^2(s,\cdot)) v^1(s,y)   \textrm{d}y \textrm{d}s \right] \\ & \leq   \left[ \int_0^t \int_0^1 \frac{\partial G}{\partial y}(t-s,x,y)h(v^1(s,\cdot), v^2(s,\cdot))  v^1(s,y)\mathbbm{1}_{\left\{ s < \sigma_M \wedge \tau_N \wedge T \right\} }   \textrm{d}y \textrm{d}s \right].
\end{split}
\end{equation*} 
By the definition of $\sigma_M$, this is at most 
\begin{equation*}
J(t,x):=C_{M} \left[ \int_0^t \int_0^1  \frac{\partial G}{\partial y}(t-s,x,y)  v^1(s,y)\mathbbm{1}_{\left\{ s < \sigma_M \wedge \tau_N \wedge T  \right\} }   \textrm{d}y \textrm{d}s \right].
\end{equation*}
An application of Proposition \ref{drift2} then gives that 
\begin{equation*}
\mathbb{E}\left[ \left\| J \right\|_{\mathscr{H},t}^p \right] \leq C_{M,T,p} \int_0^t \mathbb{E}\left[ \sup\limits_{r \in [0, s]}\|v^1(r,\cdot) \mathbbm{1}_{\left\{ r < \sigma_M \wedge \tau_N \wedge T \right\} }\|_{\mathscr{H}}^p  \right] \textrm{d}s.
\end{equation*}
Arguing in the same way, applying Proposition \ref{drift1} in place of Proposition \ref{drift2}, we are able to control the third term of (\ref{longeqn}). For the fourth term of (\ref{longeqn}), we  note that
\begin{equation*}\begin{split}
\mathbbm{1}_{\left\{ t < \sigma_M \wedge \tau_N \wedge T \right\} } & \left| \int_0^t \int_0^1 G(t-s,x,y) \sigma(y,s, v^{1}(s,\cdot)) \textrm{W}(\textrm{d}y,\textrm{d}s)\right| \\ & \leq \left| \int_0^t \int_0^1 G(t-s,x,y) \sigma(y,s, v^{1}(s,\cdot))\mathbbm{1}_{\left\{ s < \sigma_M \wedge \tau_N \wedge T \right\} } \textrm{W}(\textrm{d}y,\textrm{d}s)\right| =: K(t,x).
\end{split}
\end{equation*}
Applying Proposition \ref{vol1} and noting the linear growth condition on $\sigma$ then gives that, for $t \in [0,T]$ 
 \begin{equation*}
\mathbb{E}\left[ \|K\|_{\mathscr{H},t}^p \right] \leq C_{T,p}\left[ 1+  \int_0^t \mathbb{E}\left[ \sup\limits_{r \in [0, s]}\|v^1(r,\cdot) \mathbbm{1}_{\left\{ r < \sigma_M \wedge \tau_N \wedge T \right\} }\|_{\mathscr{H}}^p  \right] \textrm{d}s \right].
\end{equation*}
Putting the four terms together, we have that, for $ t \in [0,T]$, 
\begin{equation*}
\mathbb{E}\left[ \sup\limits_{r \in [0,t]} \left\|z^1(r,\cdot) \mathbbm{1}_{\left\{ r < \sigma_M \wedge \tau_N \wedge T \right\} }\right\|_{\mathscr{H}}^p \right] \leq C_{T,p,M} \left[ 1+  \int_0^t \mathbb{E}\left[ \sup\limits_{r \in [0, s]}\|v^1(r,\cdot) \mathbbm{1}_{\left\{ r < \sigma_M \wedge \tau_N \wedge T \right\} }\|_{\mathscr{H}}^p  \right] \textrm{d}s \right].
\end{equation*}
Therefore, 
\begin{equation*}
\mathbb{E}\left[ \sup\limits_{r \in [0,t]} \left\|v^1(r,\cdot) \mathbbm{1}_{\left\{ r < \sigma_M \wedge \tau_N \wedge T \right\} }\right\|_{\mathscr{H}}^p \right] \leq C_{T,p,M} \left[ 1+  \int_0^t \mathbb{E}\left[ \sup\limits_{r \in [0, s]}\|v^1(r,\cdot) \mathbbm{1}_{\left\{ r < \sigma_M \wedge \tau_N \wedge T \right\} }\|_{\mathscr{H}}^p  \right] \textrm{d}s \right].
\end{equation*}
It follows by Gronwall's inequality that 
\begin{equation*}
\sup\limits_{N >0} \mathbb{E}\left[ \sup\limits_{t \in [0,\sigma_M \wedge \tau_N \wedge T)} \left\|v^1(t,\cdot)\right\|_{\mathscr{H}}^p \right]= \sup\limits_{N >0} \mathbb{E}\left[ \sup\limits_{t \in [0,T]} \left\|v^1(t,\cdot) \mathbbm{1}_{\left\{ t < \sigma_M \wedge \tau_N \wedge T \right\} }\right\|_{\mathscr{H}}^p \right] \leq K_{T,p,M} < \infty.
\end{equation*}
Note that, by the MCT
\begin{equation*}
\sup\limits_{N >0} \mathbb{E}\left[ \sup\limits_{t \in [0,\sigma_M \wedge \tau_N \wedge T)} \left\|v^1(t,\cdot)\right\|_{\mathscr{H}}^p \right] = \mathbb{E}\left[ \sup\limits_{t \in [0,\sigma_M \wedge \tau \wedge T)} \left\|v^1(t,\cdot)\right\|_{\mathscr{H}}^p \right]
\end{equation*}
The same estimates hold for $v^2$. This concludes the proof.
\end{proof}
\end{prop}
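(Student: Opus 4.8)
The plan is to leverage Proposition \ref{local}, which already identifies $\tau = \sup_{M} \tau_M$, where $\tau_M$ is the blow-up time of the combined $\mathscr{H}$-norm $\|v^1\|_{\mathscr{H},t} + \|v^2\|_{\mathscr{H},t}$. Since $h$ is bounded on bounded sets of $\mathscr{H} \times \mathscr{H}$, whenever the $\mathscr{H}$-norms remain bounded the boundary speed $h(v^1,v^2)$ stays bounded as well; applying this on $[0,\tau_N)$ gives $\sigma_{C_N} \geq \tau_N$ for a suitable constant $C_N$, and taking suprema yields $\sup_M \sigma_M \geq \tau$ at once. Thus the entire content of the proposition is the reverse inequality, which amounts to showing $\sigma_M \leq \tau$ almost surely for each fixed $M > 0$, i.e. that the solution cannot blow up in $\mathscr{H}$ while the boundary speed is still bounded by $M$.

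To establish $\sigma_M \leq \tau$ I would argue through an a priori moment bound. The reduction is as follows: if for every $T > 0$ one has
\[
\mathbb{E}\left[ \sup_{t \in [0, \sigma_M \wedge \tau \wedge T)} \left( \|v^1(t,\cdot)\|_{\mathscr{H}}^p + \|v^2(t,\cdot)\|_{\mathscr{H}}^p \right) \right] < \infty,
\]
then, because the combined norm diverges as $t \uparrow \tau$ on the event $\{\tau \leq T\}$, this finiteness forces $\sigma_M < \tau \wedge T$ on $\{\tau \leq T\}$; letting $T \uparrow \infty$ along a countable sequence delivers $\sigma_M \leq \tau$ almost surely. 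Hence everything reduces to proving this finite-moment estimate for each $M$ and $T$.

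For the estimate I would first use Theorem \ref{Obstacle H} to bound $\|v^i\|_{\mathscr{H}}$ by twice the $\mathscr{H}$-norm of the corresponding unreflected process $z^i$, and then write $z^1$ in its mild form as a sum of four contributions: the heat semigroup applied to the initial datum; the moving-boundary term, whose kernel is $\partial_y G$ and which carries the factor $h(v^1,v^2)$; the drift term with kernel $G$; and the stochastic convolution against $G$. Working on $[0,\sigma_M)$ the factor $h(v^1,v^2)$ is bounded by $M$, so after inserting the indicator $\mathbbm{1}_{\{s < \sigma_M \wedge \tau_N \wedge T\}}$ (with $\tau_N$ the canonical localising sequence) the boundary term is dominated by $M$ times a $\partial_y G$-convolution of $v^1$, which Proposition \ref{drift2} controls in $L^p(\Omega; C([0,T];\mathscr{H}))$; the drift and stochastic terms are handled by Propositions \ref{drift1} and \ref{vol1} together with the linear growth of $f$ and $\sigma$. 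Summing the four bounds produces an inequality of the form
\[
\mathbb{E}\left[ \sup_{r \in [0,t]} \|v^1(r,\cdot)\mathbbm{1}_{\{r < \sigma_M \wedge \tau_N \wedge T\}}\|_{\mathscr{H}}^p \right] \leq C_{T,p,M}\left( 1 + \int_0^t \mathbb{E}\left[ \sup_{r \in [0,s]} \|v^1(r,\cdot)\mathbbm{1}_{\{r < \sigma_M \wedge \tau_N \wedge T\}}\|_{\mathscr{H}}^p \right] \textrm{d}s \right),
\]
with constant uniform in the localising index $N$; Gronwall's inequality and then monotone convergence as $N \uparrow \infty$ give finiteness up to $\sigma_M \wedge \tau \wedge T$, and the symmetric argument covers $v^2$.

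The main obstacle is the moving-boundary term: unlike the drift and noise contributions it involves $\partial v^1/\partial x$ (which appears as the $\partial_y G$ kernel after integrating the weak formulation by parts), and this only makes sense because we work in $\mathscr{H}$ and have forced the volatility to vanish linearly at the interface. Two ingredients make it tractable: first, stopping at $\sigma_M$ so that the otherwise-uncontrolled speed $h$ is replaced by the constant $M$; and second, the sharp kernel bound behind Proposition \ref{drift2}, which shows the $\partial_y G$-convolution maps $C([0,T];\mathscr{H})$ into itself with a time-integrated estimate suitable for closing the Gronwall loop. The remaining care goes into the localisation by $\tau_N$ — needed since $z^1$ is only defined up to $\tau$ — and the final monotone-convergence passage that recovers the bound on the open interval $[0,\sigma_M \wedge \tau \wedge T)$.
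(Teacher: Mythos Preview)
Your proposal is correct and follows essentially the same approach as the paper's proof: the same reduction via Proposition \ref{local} and boundedness of $h$ on bounded sets to the inequality $\sigma_M \leq \tau$, the same moment-bound criterion, the same decomposition of the mild form of the unreflected process $z^1$ into four terms controlled by Propositions \ref{drift1}, \ref{drift2}, \ref{vol1}, and the same closing via Gronwall plus monotone convergence in the localising index $N$. There is no substantive difference in strategy or in the key ingredients invoked.
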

\begin{rem}
The previous result gives that solutions to the moving boundary problem can only blow-up in ways which cause the boundary speed to blow up. For example, if we have the classical boundary behaviour given by $$h(u,v) = \gamma( u^{\prime}(0) - v^{\prime}(0)),$$ for a constant, $\gamma$, we know that blow-up can only occur when one of the spatial derivatives at the shared interface blow up.
\end{rem}

\begin{cor}
Suppose that the function $h$ is globally bounded. Then the unique maximal solution to (\ref{movingboundary}) is global i.e. $\tau=\infty$ almost surely.
\end{cor}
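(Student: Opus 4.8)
The plan is to read this off directly from the characterisation of the terminal time established in Proposition \ref{der blow up}. That proposition gives, for the maximal solution $(v^1,\eta^1,v^2,\eta^2)$ on $[0,\tau)$, the representation $\tau = \sup_{M>0} \sigma_M$, where $\sigma_M = \inf\{t \geq 0 \mid h(v^1(t,\cdot),v^2(t,\cdot)) \geq M\}$, with the usual convention $\inf \emptyset = +\infty$. So the whole argument reduces to controlling this supremum under the additional hypothesis on $h$.

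First I would use the global boundedness of $h$ to fix a deterministic constant $K$ with $|h(u,v)| \leq K$ for all $u,v \in \mathscr{H}$. Since this bound is uniform over all pairs of profiles and all $\omega$, along every sample path we have $h(v^1(t,\cdot),v^2(t,\cdot)) \leq K$ for every $t \geq 0$. Consequently, for any level $M > K$ the set $\{t \geq 0 \mid h(v^1(t,\cdot),v^2(t,\cdot)) \geq M\}$ is empty, so $\sigma_M = +\infty$. Taking the supremum over $M>0$ then forces $\tau = \infty$ almost surely, which is the claim.

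The point worth flagging is that there is essentially no analytic obstacle left at this stage: all of the work has already been carried out in Proposition \ref{der blow up} (and, upstream, in Proposition \ref{local}), which identifies blow-up of the solution in the $\mathscr{H}$-norm with blow-up of the boundary speed $h$. The only things to be careful about are the bookkeeping conventions, namely that the infimum over the empty set is $+\infty$ and that the bound on $h$ is genuinely deterministic rather than merely almost sure, so that no null-set issues intrude into the supremum over $M$.

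As a self-contained alternative (not invoking Proposition \ref{der blow up}), I would instead reproduce the mild-form estimate from the proof of that proposition, but now with the uniform bound $|h| \leq K$ playing the role that the local bound coming from $\sigma_M$ played there. This would yield, for every $T>0$ and $p>22$, a finite bound on $\mathbb{E}[\sup_{t \in [0,\tau \wedge T)} (\|v^1(t,\cdot)\|_{\mathscr{H}}^p + \|v^2(t,\cdot)\|_{\mathscr{H}}^p)]$ via Propositions \ref{drift1}, \ref{drift2} and \ref{vol1} together with Gronwall's inequality; since the $\mathscr{H}$-norm cannot blow up as $t \uparrow \tau$ while remaining $L^p$-bounded on $[0,\tau \wedge T)$, one concludes that $\{\tau \leq T\}$ has probability zero for every $T$, hence $\tau = \infty$ almost surely. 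This route is longer but makes explicit why uniform boundedness of the boundary speed is precisely the feature that rules out blow-up.
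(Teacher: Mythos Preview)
Your proposal is correct and matches the paper's approach: the corollary is stated in the paper without proof, as an immediate consequence of Proposition \ref{der blow up}, and your first argument is exactly the intended one-line deduction from the representation $\tau = \sup_{M>0}\sigma_M$. Your alternative self-contained route via the mild-form estimates and Gronwall is also valid and simply re-runs the proof of Proposition \ref{der blow up} with the global bound on $h$ in place of the stopping time $\sigma_M$, but it is not needed.
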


\section{Numerical Illustrations}

In this section, we implement a numerical scheme in order to illustrate some typical profiles for solutions to our Stefan problems and point out some of the features which were highlighted in the previous analysis. In particular, we are able to see the presence of spatial derivatives at the shared boundary when we choose the drift and volatility parameters appropriately. We will also contrast this with the behaviour at the boundary when the volatility only decays there like $\sqrt{x}$, providing numerical evidence that a spatial derivative does not exist at the boundary in this case. This is of interest as we have not shown the linear decay condition which we imposed on the volatility to be optimal. To begin, we briefly describe the numerical scheme used. We will consider drift and volatility functions which only depend on the position in space (in the relative frame) and the value of the solution at that particular position at that particular time. We also choose these functions to be the same for both sides of the Stefan problem. In addition, our functions $h$ which determine the boundary behaviour will take the classical form 
\begin{equation*}
h(u,v)= \gamma \left( u^{\prime}(0)- v^{\prime}(0) \right),
\end{equation*}
where $\gamma$ is a positive constant. We simulate the process on the interval $[0,T]$ and space interval $[p(t)-1,p(t)+1]$ at a given $t \in [0,T]$. That is, we simulate the equation in the relative frame on $[0,T] \times [-1,1]$, with $v^1, v^2$ supported on $[0,T] \times [0,1]$. We discretise the equation into $M$ time steps and $N$ space steps, and define $t_i:= iT/M$, $x_i:= i/N$. Given our simulated solution up to the $j^{\textrm{th}}$ time step, we define the values of the approximate solution at the $(j+1)^{\textrm{th}}$ time step by setting
\begin{equation*}\begin{split}
v^1(t_{j+1},x_i):= & \left|v^1(t_j,x_i)+ \frac{N^2}{M}\left( v^1(t_j,x_{i+1})+ v^1(t_j,x_{i-1})-2v^1(t_j,x_i) \right) \right. \\ & \left. - \gamma \frac{T N^2}{M}\left(v^1(t_j,x_1)-v^2(t_j,x_1) \right)\left(v^1(t_j,x_{i+1})-v(t_j,x_i) \right) \right. \\ & \left. + \frac{T}{M}f(x_i,v^1(t_j,x_i)) + \frac{\sqrt{TN}}{\sqrt{M}}\sigma(x_i,v^1(t_j,x_i))Z_{i,j}   \right|,
\end{split}
\end{equation*}
where the $Z_{i,j}$ are independent unit normal random variables. $v^2(t_{j+1},x_i)$ is defined similarly. We note that taking the absolute value on the right hand side here is intended to capture the fact that the process is reflected at zero. Given our simulated processes $v^1$ and $v^2$, we are then able to reproduce the boundary process by noting that $p^{\prime}(t)=h(v^1(t,\cdot),v^2(t,\cdot))$. We can therefore take 
\begin{equation*}
p(t_j):= \gamma \sum\limits_{k=1}^j \frac{T N}{M}\left(v^1(t_k,x_1)-v^2(t_k,x_1) \right). 
\end{equation*}
As we are interested in the effects of different volatility functions on our equations, we will fix the other parameters here. Throughout all of our simulations, we will choose the drift function $f$ and the parameter $\gamma$ such that $f=1$ and $\gamma = 10$. The initial data for both $v^1$ and $v^2$ will be given by the function $u_0$, where $u_0(0)=u_0(1)=0$, and $u_0(0.5)=1$, with linear interpolation in between these points. The volatility functions for which we perform our simulations are given by $\sigma_a(x),\sigma_b(u)$ and $\sigma_c(x)$, where
\begin{enumerate}
\item $\sigma_a(x)$ is linear between the points $0$, $0.5$ and $1$, with $\sigma_a(0)=0$, $\sigma_a(0.5)=\sigma_a(1)=1$.
\item $\sigma_b(u)=u$.
\item $\sigma_c(x)= \sqrt{x}$.
\end{enumerate}
The following figures depict the results of our simulations.

\begin{figure}[H]
        \centering
        \begin{subfigure}[b]{0.4\textwidth}
            \centering
            \includegraphics[width=\textwidth]{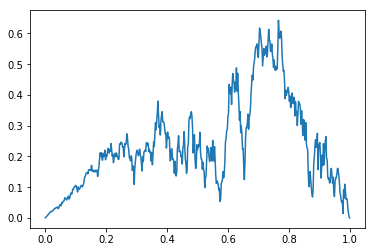}
            \caption[]%
            {{}}    
        \end{subfigure}
        \quad
        \begin{subfigure}[b]{0.4\textwidth}   
            \centering 
            \includegraphics[width=\textwidth]{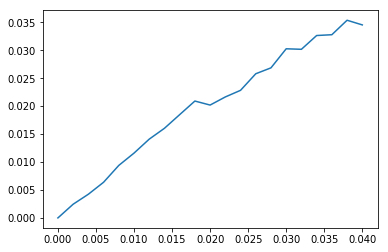}
            \caption[]%
            {{}}    
        \end{subfigure}
        \caption[]
        {\footnotesize Static Snapshot of $v^1$ at time t=0.1 on [0,1] and on [0,0.04], in the case where  $\sigma=\sigma_a(x)$, so that $\sigma$ decays linearly at the boundary.} 
\end{figure}

\begin{figure}[H]
        \centering
        \begin{subfigure}[b]{0.4\textwidth}
            \centering
            \includegraphics[width=\textwidth]{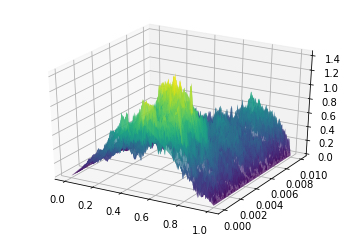}
            \caption[]%
            {{}}    
        \end{subfigure}
        \quad
        \begin{subfigure}[b]{0.4\textwidth}   
            \centering 
            \includegraphics[width=\textwidth]{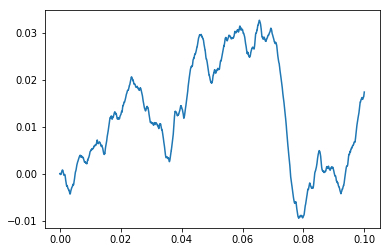}
            \caption[]%
            {{}}    
        \end{subfigure}
        \caption[]
        {\footnotesize 3D plot of $v^1$ and plot of the boundary motion, $p(t)$, in the case where $\sigma=\sigma_a(x)$, so that $\sigma$ decays linearly at the boundary.} 
\end{figure}

We see in Figure 1 that the profile is less volatile as the boundary $x=0$ is approached, and the derivative at the boundary is visible. The boundary function appears to be smooth to some degree in Figure 2 (b), as expected.  Our second set of simulations deals with the case when the volatility is multiplicative, which also falls within the framework of our earlier analysis. Figures 3 and 4 show the results of our simulation in this case. 

\begin{figure}[H]
        \centering
        \begin{subfigure}[b]{0.4\textwidth}
            \centering
            \includegraphics[width=\textwidth]{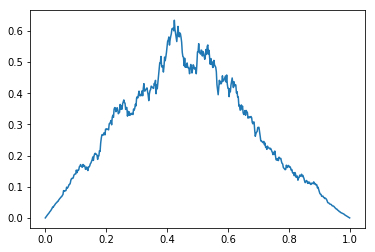}
            \caption[]%
            {{}}    
        \end{subfigure}
        \quad
        \begin{subfigure}[b]{0.4\textwidth}   
            \centering 
            \includegraphics[width=\textwidth]{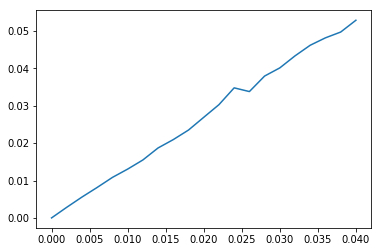}
            \caption[]%
            {{}}    
        \end{subfigure}
        \caption[]
        {\footnotesize Static Snapshot of $v^1$ at time t=0.1 on [0,1] and on [0,0.04], in the case where $\sigma=\sigma_b(u)=u$.} 
\end{figure}

\begin{figure}[H]
        \centering
        \begin{subfigure}[b]{0.4\textwidth}
            \centering
            \includegraphics[width=\textwidth]{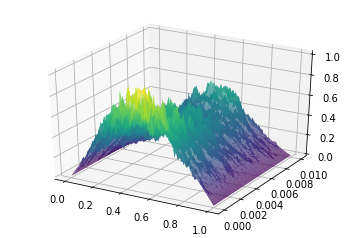}
            \caption[]%
            {{}}    
        \end{subfigure}
        \quad
        \begin{subfigure}[b]{0.4\textwidth}   
            \centering 
            \includegraphics[width=\textwidth]{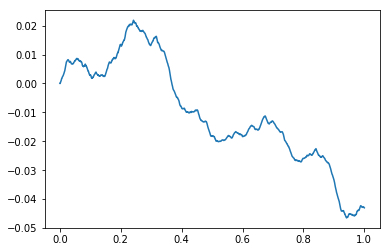}
            \caption[]%
            {{}}    
        \end{subfigure}
        \caption[]
        {\footnotesize 3D plot of $v^1$ and plot of the boundary motion, $p(t)$, in the case where $\sigma=\sigma_b(u)=u$.} 
\end{figure}

The presence of spatial derivatives at both ends of the profile is visible in Figure 3 (a). This is simply because the equation is symmetric. The Dirichlet condition is imposed at both $0$ and $1$, so that the volatility decays sufficiently quickly at both ends.

We now present the case where the volatility decays like $\sqrt{x}$ at the shared boundary, which falls outside of our proof for existence and uniqueness. 
\begin{figure}[H]
        \centering
        \begin{subfigure}[b]{0.4\textwidth}
            \centering
            \includegraphics[width=\textwidth]{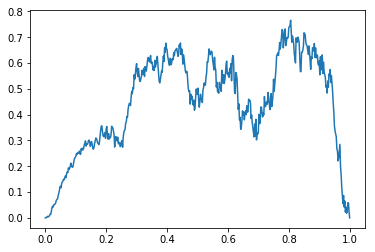}
            \caption[]%
            {{}}    
        \end{subfigure}
        \quad
        \begin{subfigure}[b]{0.4\textwidth}   
            \centering 
            \includegraphics[width=\textwidth]{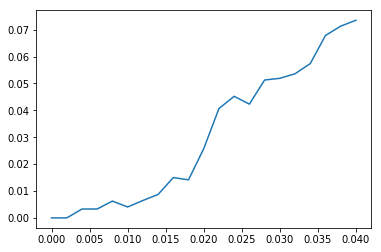}
            \caption[]%
            {{}}    
        \end{subfigure}
        \caption[]
        {\footnotesize Static Snapshot of $v^1$ at time t=0.1 on [0,1] and on [0,0.04], in the case where $\sigma=\sigma_c(x)=\sqrt{x}$.} 
\end{figure}

\begin{figure}[H]
        \centering
        \begin{subfigure}[b]{0.4\textwidth}
            \centering
            \includegraphics[width=\textwidth]{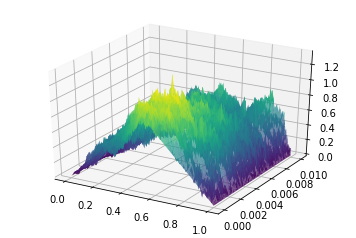}
            \caption[]%
            {{}}    
        \end{subfigure}
        \quad
        \begin{subfigure}[b]{0.4\textwidth}   
            \centering 
            \includegraphics[width=\textwidth]{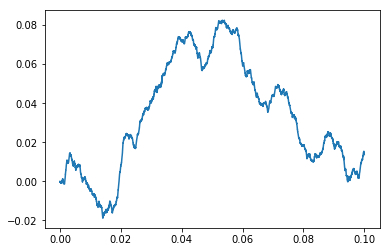}
            \caption[]%
            {{}}    
        \end{subfigure}
        \caption[]
        {\footnotesize 3D plot of $v^1$ and plot of the boundary motion in the case where $\sigma=\sigma_c(x)=\sqrt{x}$.} 
\end{figure}

In this case, the profiles can be seen to be significantly rougher close to the boundary. They do not appear to have spatial derivatives there (although this is inconclusive of course). Interestingly, the boundary motion appears to be rough to some degree. We note, however, that the scheme did not blow up- we were able to obtain sensible profiles and sensible processes for the movement of the boundary. This suggests that, perhaps by considering the derivative at the boundary in a suitable weak sense, equations of this form may be solvable without the need for linear decay of the volatility, and could produce rough paths for the boundary motion. 

We perform a simple numerical test in order to give some (albeit naive) quantification of how linear the profiles are close to the boundary in the cases presented above. For a profile at a given time, we fit the values in the spatial interval $[0,0.04]$ to a line passing through the origin by the method of least squares. Calculating the sum of the squared residuals and dividing by the square of height of the fitted line at position 0.04, we obtain values indicating how well the profile can be fitted to a line, with these values now independent of scale and so comparable across different profiles. Time averaging these values for each of the three cases, we obtain the following values, which clearly indicate that there is significant deviation from being linear close to zero when $\sigma=\sigma_c(x)=\sqrt{x}$.

\begin{table}[!h]
\centering 
\begin{tabular}{ccccccc}
\hline\hline
$\sigma_a(x)$ & $\sigma_b(u)$ & $\sigma_c(x)$ \\ \hline
0.04274 & 0.01473 & 0.28378 \\
\hline
\end{tabular}
\label{tab:hresult}
\caption{Time-Averaged measure of deviation of profile from linear fit on the spatial interval $[0,0.04]$.} 
\end{table}

\textbf{Acknowledgements.} 
The research of J. Kalsi was supported by EPSRC (EP/L015811/1).

\appendix

\section{Proof of Proposition \ref{Heat Kernel}}

We provide the main details for the proofs of the inequalities stated in Proposition 4.2. Recalling Remark \ref{G}, we focus on proving the estimates for the component $J$ of $G$, where $J$ is given by
\begin{equation*}
J(t,x,y)= \frac{1}{\sqrt{4 \pi t}} \left[ \exp\left( - \frac{(x-y)^2}{4t} \right) - \exp\left(- \frac{(x+y)^2}{4t} \right) \right].
\end{equation*}
We also define $\tilde{J}$ such that 
\begin{equation}\label{H}
\tilde{J}(t,x,y):= \frac{y}{x}J(t,x,y).
\end{equation}
The proofs of the estimates in Lemma 3.2 of \cite{Zheng} were very helpful for producing the following arguments.
\subsection{Proof of Inequality 1}
\begin{proof}
The first inequality in the Proposition states that, for every $T>0$, there exists a constant $C_T$ such that, for every $t \in (0,T]$,
\begin{equation*}
 \sup\limits_{x \in (0,1]} \int_0^1  \frac{1}{x} G(t,x,y)  \textrm{d}y \leq \frac{C_T}{\sqrt{t}}.
\end{equation*}
We prove the corresponding bound for $J$. 
\begin{equation*}\begin{split}
\frac{1}{x} J(t,x,y)= & \frac{1}{x \sqrt{4 \pi t}} \left[ \exp\left( - (x-y)^2/4t \right) - \exp\left(- (x+y)^2/4t \right) \right] \\ = & \frac{1}{x \sqrt{4 \pi t}}\exp\left(-(x-y)^2/4t \right) \left[ 1- \exp\left( - xy/t \right) \right].
\end{split}
\end{equation*}
Note that, since $x \geq 0$, 
\begin{equation*}
xy= x( (y-x)+x) \leq 2x^2 \mathbbm{1}_{\left\{ y-x \leq x \right\}}+ 2x(y-x) \mathbbm{1}_{\left\{ y-x > x \right\}}.
\end{equation*}
Therefore
\begin{equation*}
\left( 1- \exp(-xy/t)\right) \leq \left( 1-\exp(-2x^2/t) \right) \mathbbm{1}_{\left\{ y-x \leq x \right\}} +\left(1-  \exp(-2x(y-x)/t) \right) \mathbbm{1}_{\left\{ y-x > x \right\}}.
\end{equation*}
Since, for $x \geq 0$
\begin{equation*}
(1-e^{-x}) \leq \min(x, \sqrt{x}),
\end{equation*} we have that 
\begin{equation*}
\left( 1- \exp(-xy/t)\right) \leq 2 \left[ \frac{x}{\sqrt{t}} + \frac{x|y-x|}{t} \right].
\end{equation*}
Putting these bounds together, we see that
\begin{equation*}\begin{split}
\int_0^1 \frac{1}{x} J(t,x,y) \; \textrm{d}y & \leq C \int_0^1 \frac{1}{\sqrt{t}} \exp \left(- \frac{(x-y)^2}{4t} \right) \left[ \frac{1}{\sqrt{t}} + \frac{|y-x|}{t} \right] \textrm{d}y \\ & \leq C \int_{\mathbb{R}} \left[ \frac{1}{t}e^{-y^2/4t} + \frac{|y|}{t^{3/2}} e^{-y^2/4t} \right] \textrm{d}y = \frac{C}{\sqrt{t}}.
\end{split}
\end{equation*}
\end{proof}

\subsection{Proof of Inequality 3}

\begin{proof}
We begin by recalling the inequality. For every $T>0$ and $q \in (1,2)$, we claim that $\exists C_{T,q}$ such that 
\begin{equation*}
\sup\limits_{t \in [0,T]} \int_0^t \left[ \int_0^1 \left( \tilde{G}(s,x,z)- \tilde{G}(s,y,z) \right)^2 \textrm{d}z \right]^q \textrm{d}s \leq C_{T,q}|x-y|^{(2-q)/3}.
\end{equation*}
We will prove the corresponding bound for $\tilde{J}$. The arguments made in the proof of Lemma 3.2.1 of \cite{Zheng} are followed here, and we make adjustments where necessary. Assume wlog that $x \leq y$, and define $h=y-x$. By following the proofs of inequalities (1) and (2) in Lemma 3.2.1 of \cite{Zheng}, we arrive at the inequality
\begin{equation*}
\begin{split}
0 \leq \int_0^1 \left( \tilde{J}(s,x,z)- \tilde{J}(s,y,z) \right)^2 \textrm{d}z  \leq & \frac{C}{\sqrt{s}} \left[ 1+ s \left( \frac{1-e^{-x^2/4s}}{x^2} + \frac{1-e^{-(x+h)^2/4s}}{(x+h)^2} \right) \right. \\ & - e^{-h^2/16s} \left. \left(1+ 2s \frac{1-e^{-\frac{x(x+h)}{4s}}}{x(x+h)} \right)  \right]=: CR(s,x,h).
\end{split}
\end{equation*}
Therefore, it is enough to prove that 
\begin{equation*}
\int_0^T R(s,x,h)^q \; \textrm{d}s \leq C_{T,q} h^{(2-q)/3}.
\end{equation*}
In the spirit of \cite{Zheng} once again, we control this integral by proving two separate estimates for it which we will then combine. These estimates are obtained by splitting $R$ up into different components. Note that 
\begin{equation*}
R(s,x,h)= R_1(s,x,h) + R_2(s,x,h) \geq 0,
\end{equation*}
where we define 
\begin{equation*}
R_1(s,x,h):=\frac{1}{\sqrt{s}} \left( 1- e^{-h^2/16s} \right),
\end{equation*}
and
\begin{equation*}\begin{split}
R_2(s,x,h):= \sqrt{s} \left[ \frac{1-e^{-x^2/4s}}{x^2} + \frac{1-e^{-(x+h)^2/4s}}{(x+h)^2} - 2e^{-h^2/16s} \left( \frac{1-e^{-\frac{x(x+h)}{4s}}}{x(x+h)} \right) \right].
\end{split}
\end{equation*}
Therefore, if $I(s,x,h) \geq 0$ such that $R_2(s,x,h) \leq I(s,x,h)$ we have that 
\begin{equation*}
R(s,x,h)^q \leq (R_1(s,x,h) + I(s,x,h))^q \leq C_q \left(R_1(s,x,h)^q + I(s,x,h)^q \right).
\end{equation*}
Our approach will therefore be to bound the integral of the $R_1$ and $I$ terms separately, and we do so for two different functions $I$. Starting with the $R_1$ term, we first note that 
\begin{equation*} \begin{split}
R_1(s,x,h) \leq \frac{1}{\sqrt{s}}, \; \; \textrm{ and that } \; \; 
R_1(s,x,h) \leq \frac{h^2}{16 s^{3/2}},
\end{split}
\end{equation*}
where we once again make use of the inequality $(1-e^{-x}) \leq x$. It then follows that 
\begin{equation*}\begin{split}
\int_0^T R_1(s,x,h)^q \textrm{d}s \leq \int_0^{h^2} \left(\frac{1}{\sqrt{s}} \right)^q \textrm{d}s + \int_{h^2}^{\infty} \frac{h^{2q}}{16^q s^{3q/2}} \textrm{d}s,
\end{split}
\end{equation*}
Calculating, we see that this is equal to $Ch^{2-q}$. For the $R_2$ term, we have the following bound corresponding to our first choice of $I$,
\begin{equation*}
R_2(s,x,h) \leq \sqrt{s} \left[ \frac{1}{2s}- 2e^{-h^2/16s} \left(\frac{1}{4s} - \frac{x(x+h)}{32 s^2}\right) \right], 
\end{equation*}
where we have made use of the fact that 
\begin{equation*}
x-\frac{x^2}{2} \leq 1-e^{-x} \leq x.
\end{equation*}
In this case, $I$ is then equal to 
\begin{equation*}
\frac{1}{2 \sqrt{s}}\left[ 1-e^{-h^2/16s} \right] +\frac{x(x+h)}{16 s^{3/2}}e^{-h^2/16s}.
\end{equation*}
The first of these terms is in the same form as $R_1$, and so we can control it in the same way. For the second term, we have that
\begin{equation*}
\begin{split}
\left( \frac{x(x+h)}{16 s^{3/2}}e^{-h^2/16s} \right)^q = C_q x^q(x+h)^q \left( \frac{e^{-(h^2)/16 s}}{s^{3/2}} \right)^q &  \leq C_q x^q(x+h)^q \left( \frac{s}{s^{3/2} h^2} \right)^q \\ & = C_q x^q(x+h)^q s^{-q/2} h^{-2q} ,
\end{split}
\end{equation*}
since $e^{-x} \leq \frac{1}{e x}$. This bound then gives that
\begin{equation*}
\begin{split}
\int_0^T \left( \frac{x(x+h)}{16 s^{3/2}}e^{-h^2/16s} \right)^q \;  \textrm{d}s & \leq \int_0^{h^2} \left( \frac{x(x+h)}{16 s^{3/2}}e^{-h^2/16s} \right)^q \;  \textrm{d}s + \int_{h^2}^{\infty} \left( \frac{x(x+h)}{16 s^{3/2}}e^{-h^2/16s} \right)^q \;  \textrm{d}s
\\ & \leq C_q x^q(x+h)^q \left( \int_0^{h^{2}} s^{-q/2} h^{-2q} \; \textrm{d}s + \int_{h^{2}}^{\infty} \frac{1}{s^{3q/2}} \textrm{d}s \right).
\end{split}
\end{equation*}
Calculating gives that the right hand side is equal to 
\begin{equation*}
C_q x^q (x+h)^q h^{2- 3q}.
\end{equation*}
So we obtain our first bound
\begin{equation}\label{firstbound}
\int_0^T R(s,x,h)^q \; \textrm{d}s \leq C_q \left[ h^{2-q} + x^q (x+h)^q h^{2- 3q} \right].
\end{equation}
We now prove our second estimate for the integral by bounding $R_2$ with a different choice of $I$. We have that 
\begin{equation}\label{J2}\begin{split}
R_2(s,x,h) \leq  & \sqrt{s} \left| \frac{1-e^{-x^2/4s}}{x^2} + \frac{1-e^{-(x+h)^2/4s}}{(x+h)^2} - 2 \left( \frac{1-e^{-\frac{x(x+h)}{4s}}}{x(x+h)} \right) \right| \\ &   + \sqrt{s} \left| 2\left(1- e^{-h^2/16s}\right)  \left( \frac{1-e^{-\frac{x(x+h)}{4s}}}{x(x+h)} \right)\right|.
\end{split}
\end{equation}
The second term on the right hand side is at most 
\begin{equation*}
\frac{C}{\sqrt{s}} \left(1- e^{-h^2/16s}\right).
\end{equation*}
This is once again in the form of $R_1$, and so can be controlled in the same way. In order to bound the first term on the right hand side of (\ref{J2}), we define here the function
\begin{equation*}
\phi(t,x):= \frac{1-e^{-x/4t}}{x}.
\end{equation*}
Then we can calculate that, for $x \in (0,1]$ and every $s >0$,
\begin{equation}\label{derbound}
\left|\frac{\partial \phi}{\partial x}(s,x) \right|= \left| -\frac{1}{x^2}(1-e^{-x/4s}) + \frac{1}{4xs} e^{-x/4s} \right| \leq \frac{C}{xs}. 
\end{equation}
Therefore, 
\begin{equation*}\begin{split}
\sqrt{s} & \left|  \frac{1-e^{-x^2/4s}}{x^2} + \frac{1-e^{-(x+h)^2/4s}}{(x+h)^2} - 2 \left( \frac{1-e^{-\frac{x(x+h)}{4s}}}{x(x+h)} \right) \right| \\  & = \sqrt{s}\left| \phi(s,x^2) + \phi(s,(x+h)^2) -2 \phi(s,x(x+h)) \right|.
\end{split}
\end{equation*}
Using (\ref{derbound}), we see that this is at most
\begin{equation*}
\frac{C}{x^2 \sqrt{s}} \times (xh + h^2) = C \left( \frac{h}{x\sqrt{s}} + \frac{h^2}{x^2 \sqrt{s}} \right).
\end{equation*}
We have that 
\begin{equation*}
C\int_0^T \left[\left( \frac{h}{x\sqrt{s}} + \frac{h^2}{x^2 \sqrt{s}} \right) \right]^q \textrm{d}s= C_{T,q} \left(\frac{h^q}{x^q} + \frac{h^{2q}}{x^{2q}} \right).
\end{equation*}
Putting these parts together, we have deduced our second estimate, 
\begin{equation*}\label{secondbound}
\int_0^T R(s,x,h)^q \textrm{d}s \leq C_{T,q} \left( h^{2-q} +\frac{h^q}{x^q} + \frac{h^{2q}}{x^{2q}} \right).
\end{equation*}
We can now conclude our proof by splitting into cases, and using the two inequalities in the different scenarios. For $x \leq h^{\frac{4q-2}{3q}}$, we use our first bound, (\ref{firstbound}), and obtain that 
\begin{equation*}\begin{split}
\int_0^T R(s,x,h)^q \; \textrm{d}s & \leq C_q \left[ h^{2-q} + x^q (x+h)^q h^{2- 3q} \right]\\ &  \leq C_q \left[ h^{2-q} + h^{\frac{8q-4 +6 -9q}{3}} \right] \leq C_q h^{\frac{2 -q}{3}},
\end{split}
\end{equation*}
where we have used that, since $q \in (1,2)$, we have $(4q-2)/3q \in (2/3,1)$ and so $h \leq h^{(4q-2)/3q}$. If $x > h^{\frac{4q-2}{3q}}$, we have by our second bound that 
\begin{equation*}\begin{split}
\int_0^T R(s,x,h)^q \textrm{d}s & \leq C_{T,q} \left( h^{2-q} +\frac{h^q}{x^q} + \frac{h^{2q}}{x^{2q}} \right) \\ & \leq C_{T,q} \left( h^{2-q} + h^{q- \frac{4q-2}{3}} + h^{2q-\frac{8q -4}{3}} \right) \\& =  C_{T,q} \left( h^{2-q} + h^{(2-q)/3} + h^{2(2-q)/3} \right) \\ & \leq C_{T,q} h^{(2-q)/3},
\end{split}
\end{equation*}
recalling that $h \in [0,1]$ and $q \in (1,2)$. This concludes the proof.
\end{proof}

\subsection{Proof of Inequality 4}

\begin{proof}
We begin by recalling the inequality. For $T>0$, $0 \leq s \leq t \leq T$ and $q \in (1,2)$, $\exists \; C_{T,q} >0$ such that 
\begin{equation*}
\sup\limits_{x \in [0,1]} \int_0^s \left[ \int_0^1 (\tilde{G}(t-r,x,y)- \tilde{G}(s-r,x,y))^2 \textrm{d}y \right]^q \textrm{d}r \leq C_{T,q}|t-s|^{(2-q)/2}.
\end{equation*}
Once again, we will prove the corresponding bound for $\tilde{J}$, given by (\ref{H}). The proof will have two steps. We will first perform a change of variables, which will allow us to bound the integral by $|t-s|^{(2-q)/2}$ multiplied by some integral which does not depend on $t,s$, which we denote by $I(x)$. We will then show that 
\begin{equation*}
\sup\limits_{x \geq 0} I(x) < \infty.
\end{equation*}
Let $k:= t-s$. We have that 
\begin{equation*}\begin{split}
\int_0^s &  \left[ \int_0^1 (\tilde{J}(s-r + k,x,y)- \tilde{J}(s-r,x,y))^2 \textrm{d}y \right]^q \textrm{d}r 
\\   & = \int_0^s \left[ \int_0^1 (\tilde{J}(r+ k,x,y)- \tilde{J}(r,x,y))^2 \textrm{d}y \right]^q \textrm{d}r 
\\ & = \int_0^s \left[ \int_0^1 \frac{1}{k}\left(\tilde{J}\left(\frac{r}{k}+ 1,\frac{x}{\sqrt{k}},\frac{y}{\sqrt{k}} \right)- \tilde{J}\left(\frac{r}{k},\frac{x}{\sqrt{k}},\frac{y}{\sqrt{k}}\right)\right)^2 \textrm{d}y \right]^q \textrm{d}r. 
\end{split}
\end{equation*}
Performing the change of variables $z:= y/\sqrt{k}$ and $u=r/k$, we obtain that this is at most $k^{(2-q)/2} \times I(\frac{x}{\sqrt{k}})$, where
\begin{equation*}
I(x)= \int_0^{\infty} \left[ \int_0^{\infty} \left( \tilde{J}(u+1,x,z)- \tilde{J}(u,x,z) \right)^2 \textrm{d}z \right]^q \textrm{d}u.
\end{equation*}
By inequality (1) of Lemma 3.2.1 in \cite{Zheng}, we have that there exists $C>0$ such that for $u \geq 0$
\begin{equation*}
\sup\limits_{x \geq 0} \int_0^{\infty} \tilde{J}(u,x,z)^2 \textrm{d}z \leq \frac{C}{\sqrt{u}}.
\end{equation*}
Therefore, for $u \geq 0$ we have  
\begin{equation*}
\sup\limits_{x \geq 0} \int_0^{\infty} \left( \tilde{J}(u+1,x,z)- \tilde{J}(u,x,z) \right)^2 \textrm{d}z \leq \frac{C}{\sqrt{u}}.
\end{equation*} 
It follows that, for $q \in (1,2)$, 
\begin{equation*}
\int_0^{1} \left[ \int_0^{\infty} \left( \tilde{J}(u+1,x,z)- \tilde{J}(u,x,z) \right)^2 \textrm{d}z \right]^q \textrm{d}u = C_{q} < \infty.
\end{equation*}
So it is only left to control
\begin{equation}\label{bound-}
\int_1^{\infty} \left[ \int_0^{\infty} \left( \tilde{J}(u+1,x,z)- \tilde{J}(u,x,z) \right)^2 \textrm{d}z \right]^q \textrm{d}u.
\end{equation}
We split the space integral here into two cases, when $\left\{ z \geq 2x \right\}$, and when $\left\{ z < 2x \right\}$. For the integral over the spatial region $\left\{ z< 2x \right\}$, we have that,
\begin{equation*}
\begin{split}
\int_1^{\infty} &  \left[ \int_0^{2x} \left( \tilde{J}(u+1,x,z)- \tilde{J}(u,x,z) \right)^2 \textrm{d}z \right]^q \textrm{d}u  \leq C_q \int_1^{\infty}  \left[ \int_0^{\infty} \left( J(u+1,x,z)- J(u,x,z) \right)^2 \textrm{d}z \right]^q  \textrm{d}u\\  \leq &  C_q \int_1^{\infty} \left[ \int_0^{\infty} \frac{1}{u} \left[ e^{-(x-z)^2/4(u+1)} - e^{-(x-z)^2/4u} \right]^2 \textrm{d}z \right]^q  \textrm{d}u\\ & +  C_q\int_1^{\infty} \left[ \int_0^{\infty} \frac{1}{u} \left[ e^{-(x+z)^2/4(u+1)} - e^{-(x+z)^2/4u} \right]^2 \textrm{d}z \right]^q \textrm{d}u \\ & + C_q\int_1^{\infty} \left[ \int_0^{\infty} \left( \frac{1}{\sqrt{u}} - \frac{1}{\sqrt{u+1}}\right)^2 \left( e^{-(x-z)^2 /4(u+1)} - e^{-(x+z)^2/4(u+1)} \right)^2 \textrm{d}z \right]^q \textrm{d}u \\ \leq & C_q\int_1^{\infty} \left[  \int_{\mathbb{R}} \frac{1}{u} \left( e^{-z^2/4(u+1)} - e^{-z^2/4u} \right)^2 \textrm{d}z \right]^q \textrm{d}u \\ & + C_q \int_1^{\infty} \frac{1}{u^{3q}} \left[  \int_0^{\infty} \left( e^{-(x-z)^2 /4(u+1)} - e^{-(x+z)^2/4(u+1)} \right)^2 \textrm{d}z \right]^q \textrm{d}u \\ \leq & C_q \int_1^{\infty} \left[ \int_{\mathbb{R}} \frac{1}{u} e^{-z^2/2(u+1)}\left( 1 - e^{-z^2(\frac{1}{4u} - \frac{1}{4(u+1)})} \right)^2 \textrm{d}z \right]^q  \textrm{d}u + C_q \int_1^{\infty} \frac{1}{u^{5q/2}} \textrm{d}u  \\ \leq &  C_q \int_1^{\infty} \left[  \int_{\mathbb{R}} \frac{z^4}{u^5}e^{-z^2/2(u+1)} \textrm{d}z \right]^q \textrm{d}u + C_q \int_1^{\infty} \frac{1}{u^{5q/2}} \textrm{d}u  = C_q \int_1^{\infty} \frac{1}{u^{5q/2}} \textrm{d}u.
\end{split}
\end{equation*}
This last term is finite and doesn't depend on $x$, so we have successfully bounded (\ref{bound-}) in the case when $\left\{z< 2x \right\}$. On the set $\left\{ z \geq 2x \right\}$, we have that 
\begin{equation*}
\begin{split}
\int_1^{\infty} & \left[ \int_{2x}^{\infty} \left( \tilde{J}(u+1,x,z)- \tilde{J}(u,x,z) \right)^2 \textrm{d}z \right]^q \textrm{d}u
\\ \leq C_q & \int_0^{\infty} \left[ \int_{2x}^{\infty} \frac{z^2}{x^2} \frac{1}{u}\left(  \left( e^{-(x-z)^2/4(u+1)} - e^{-(x+z)^2/4(u+1)} \right) -  \left( e^{-(x-z)^2/4u} - e^{-(x+z)^2/4u} \right)  \right)^2  \textrm{d}z \right]^q \textrm{d}u 
\\  & + C_q \int_0^{\infty} \left[ \int_{2x}^{\infty} \frac{z^2}{x^2} \left( \frac{1}{\sqrt{u}}- \frac{1}{\sqrt{u+1}} \right)^2 \left( e^{-(x-z)^2/4(u+1)} - e^{-(x+z)^2/4(u+1)} \right)^2 \textrm{d}z \right]^q \textrm{d}u 
\\ = C_q & \int_0^{\infty} \left[ \int_{2x}^{\infty} \frac{z^2}{x^2} \frac{1}{u}\left(  e^{-(x-z)^2/4(u+1)}\left(1 - e^{-xz/(u+1)} \right)  -  e^{-(x-z)^2/4u} \left( 1- e^{-xz/u} \right)  \right)^2  \textrm{d}z \right]^q \textrm{d}u 
\\  & + C_q \int_0^{\infty} \left[ \int_{2x}^{\infty} \frac{z^2}{x^2} \left( \frac{1}{\sqrt{u}}- \frac{1}{\sqrt{u+1}} \right)^2 \left( e^{-(x-z)^2/4(u+1)} - e^{-(x+z)^2/4(u+1)} \right)^2 \textrm{d}z \right]^q \textrm{d}u 
\\ \leq C_q & \int_1^{\infty} \left[ \int_{2x}^{\infty} \frac{z^2}{x^2} \frac{1}{u} \left( e^{-xz/(u+1)} - e^{-xz/u} \right)^2 e^{-(x-z)^2/2(u+1)} \textrm{d}z \right]^q \textrm{d}u 
\\ & +  C_q  \int_1^{\infty} \left[ \int_{2x}^{\infty} \frac{z^2}{x^2} \frac{1}{u} \left( 1 - e^{-xz/u} \right)^2 \left( e^{-(x-z)^2/4(u+1)} - e^{-(x-z)^2/4u} \right)^2  \textrm{d}z \right]^q \textrm{d}u 
 \\ & +  C_q  \int_1^{\infty} \left[ \int_{2x}^{\infty} \frac{z^2}{x^2} \frac{1}{u^3} e^{-(x-z)^2/2(u+1)} \left( 1- e^{-xz/(u+1)} \right)^2  \textrm{d}z \right]^q \textrm{d}u
\end{split}
\end{equation*}
This is at most 
\begin{equation*} \begin{split}
 C_q & \int_1^{\infty} \left[ \int_{2x}^{\infty} \frac{z^2}{x^2} \frac{1}{u}e^{-2xz/(u+1)} \left( 1- e^{-xz/u(u+1)} \right)^2 e^{-(x-z)^2/2(u+1)} \textrm{d}z \right]^q \textrm{d}u 
\\ & +  C_q  \int_1^{\infty} \left[ \int_{2x}^{\infty} \frac{z^4}{u^3} e^{-(x-z)^2/2(u+1)} \left( 1- e^{-(x-z)^2/4u(u+1)} \right)^2  \textrm{d}z \right]^q \textrm{d}u 
 \\ & +  C_q  \int_1^{\infty} \left[ \int_{2x}^{\infty} \frac{z^4}{u^3(u+1)^2} e^{-(z-x)^2/2(u+1)}   \textrm{d}z \right]^q \textrm{d}u
 \\ \leq &  C_q  \int_1^{\infty} \left[ \int_{0}^{\infty} \frac{z^4}{u^3(u+1)^2} e^{-z^2/8(u+1)}  \textrm{d}z \right]^q \textrm{d}u 
\\ & +  C_q  \int_1^{\infty} \left[ \int_{0}^{\infty} \frac{z^8}{u^5(u+1)^2}e^{-z^2/8(u+1)}  \textrm{d}z \right]^q \textrm{d}u 
 \\ & +  C_q  \int_1^{\infty} \left[ \int_{0}^{\infty} \frac{z^4}{u^3(u+1)^2} e^{-z^2/8(u+1)}  \textrm{d}z \right]^q \textrm{d}u
 \\ \leq & C_q \int_1^{\infty} \frac{1}{u^{5q/2}} \textrm{d}u = C_q < \infty.
\end{split} \end{equation*}

Therefore we have that $\sup\limits_{x \geq 0} I(x) < \infty$, concluding the proof.
\end{proof}
\subsection{Proof of Inequality 5}
\begin{proof}
We want to prove that, for $t \in (0,T]$
 \begin{equation*}
\sup\limits_{x \in [0,1]} \int_0^1 \left| \tilde{H}(t,x,y) \right| \textrm{d}y \leq \frac{C_T}{\sqrt{t}}.
\end{equation*}
We prove the corresponding bound for $K$, where we define, for $x \in (0,1]$, $y \in [0,1]$ and $t \geq 0$,
\begin{equation*}
K(t,x,y):= \frac{y}{x}\frac{\partial J}{\partial y}(t,x,y),
\end{equation*}
and for $y \in [0,1]$, $t \geq 0$ we define 
\begin{equation*}
K(t,0,y):= y \frac{\partial^2 J}{\partial x \partial t}(t,0,y).
\end{equation*}
Note that, for $t>0$, $x \in (0,1]$, $y \in [0,1]$ and $y \geq 2x$, 
\begin{equation*}
\begin{split}
\left| K(t,x,y) \right| \leq &  \frac{Cy}{t^{3/2}} \left[ e^{-(x-y)^2/4t} + e^{-(x+y)^2/4t} \right]+ \frac{Cy^2}{t^{3/2}x} \left[e^{-(x-y)^2/4t}-e^{-(x+y)^2/4t} \right] \\ \leq & \frac{Cy}{t^{3/2}}e^{-y^2/16t}  + \frac{Cy^2}{t^{3/2}x}e^{-(x-y)^2/4t}\left[ 1- e^{-xy/t} \right] \\ \leq & e^{-y^2/16t} \left[  \frac{Cy}{t^{3/2}}+ \frac{Cy^3}{t^{5/2}} \right].
\end{split}
\end{equation*}
By letting $x \downarrow 0$, we also have that the bound holds at $x=0$ for $t>0$ and $y \in [0,1]$. For $x \in (0,1]$, $y \in [0,1]$ and $y < 2x$, we have that 
\begin{equation*}
\begin{split}
|K(t,x,y)| \leq & 2 \left| \frac{\partial J}{\partial y}(t,x,y) \right| \\  \leq & \frac{C}{t^{3/2}} \left[ (x-y) e^{-(x-y)^2/4t} + (x+y)e^{-(x+y)^2/4t} \right] \\  \leq & \frac{C}{t^{3/2}} \left[ (x-y) e^{-(x-y)^2/16t} + (x+y)e^{-(x+y)^2/16t} \right]
\end{split}
\end{equation*}
It follows that 
\begin{equation*}
\sup\limits_{x \in [0,1]} \int_0^1 \left| K(t,x,y) \right| \textrm{d}y \leq C \int_{\mathbb{R}} e^{-y^2/16t} \left[  \frac{y}{t^{3/2}}+ \frac{y^3}{t^{5/2}} \right] \textrm{d}y \leq \frac{C}{\sqrt{t}}.
\end{equation*}
\end{proof}
\textbf{Acknowledgements.} 
The research of J. Kalsi was supported by EPSRC (EP/L015811/1).

\end{document}